\PassOptionsToPackage{dvipsnames}{xcolor}
\documentclass[leqno]{siamart251104}
\usepackage{./Macros/mydef}
\usepackage{amssymb}
\usepackage{graphicx}
\usepackage{xspace}
\usepackage{derivative}
\usepackage{bm,bbm}
\usepackage[numbers,sort,compress]{natbib}
\usepackage{dsfont}
\usepackage{enumitem}
\usepackage{subcaption}
\usepackage{booktabs}
\usepackage{algorithm}
\usepackage{algpseudocode}
\usepackage{comment}
\usepackage[normalem]{ulem}
\usepackage{siunitx}
\newcommand{\myhide}[1]{{}}

\usepackage{graphics}
\usepackage{wrapfig}
\usepackage{float}
\usepackage[percent]{overpic}
\usepackage{varwidth}
\usepackage{tikz}
\usepackage{pgfplots}
\usepackage{adjustbox}
\usetikzlibrary{arrows,matrix,positioning,fit}
\usetikzlibrary{shapes,positioning}
\usetikzlibrary{backgrounds}
\usepackage{tikz-layers}
\usepgfplotslibrary{fillbetween}
\usetikzlibrary{intersections}
\pgfplotsset{compat=1.14}
\usepgfplotslibrary{colorbrewer}
\usepgfplotslibrary{patchplots}
\usepgfplotslibrary[colorbrewer]
\usetikzlibrary{pgfplots.colorbrewer}
\usetikzlibrary[pgfplots.colorbrewer]
\usepgfplotslibrary{units}
\usetikzlibrary{spy}
\usepackage{pgfplotstable}
\usepackage{arrayjobx}
\usepackage{subcaption}
\graphicspath{ {./Figs/} }
\usetikzlibrary{spy}
\usetikzlibrary{calc}
\usepackage{soul}

\DeclareMathOperator*{\conv}{conv}

\colorlet{PlotColor1}{Spectral-A}
\colorlet{PlotColor3}{Spectral-L}
\colorlet{PlotColor2}{Spectral-O}
\colorlet{PlotColor4}{Spectral-D}
\colorlet{PlotColor5}{black}

\begin{document}

\newcommand\footnotemarkfromtitle[1]{%
  \renewcommand{\thefootnote}{\fnsymbol{footnote}}%
  \footnotemark[#1]%
  \renewcommand{\thefootnote}{\arabic{footnote}}}

\newcommand{\TheTitle}{Well-balanced Second-Order Approximation of the Compressible Atmospheric Euler Equations}
\newcommand{\TheAuthors}{C. Farris, M. Maier, E.~J. Tovar}

\headers{Well-balanced IDP scheme for atmospheric Euler}{\TheAuthors}

\title{{\TheTitle}\thanks{Draft version, \today \funding{
  The work of CF was supported by the Laboratory Directed Research and
  Development (LDRD) program at Los Alamos National Laboratory under
  project number 20251079ER. Research conducted at Los Alamos National
  Laboratory is done under the auspices of the National Nuclear Security
  Administration of the U.S. Department of Energy under Contract No.
  89233218CNA000001. MM was partially supported by the National Science
  Foundation under grant DMS-2045636 and by the Air Force Office of
  Scientific Research, USAF, under grant/contract number FA9550-23-1-0007.
  The Los Alamos unlimited release number is LA-UR-26-24992.
  }}}

\author{
  Crystal Farris\footnotemark[2]
  \and
  Matthias Maier\footnotemark[2]
  \and
  Eric J. Tovar\footnotemark[3]
}

\maketitle

\renewcommand{\thefootnote}{\fnsymbol{footnote}}
\footnotetext[2]{Department of Mathematics, Texas A\&M University}
\footnotetext[3]{Xcimer Energy Corporation}

\renewcommand{\thefootnote}{\arabic{footnote}}
\begin{abstract}
  We introduce a second-order approximation to the compressible atmospheric
  Euler equations with gravity that is invariant domain preserving and
  well-balanced with respect to rest states. The approximation is built
  upon discrete auxiliary states derived from a hydrostatic reconstruction
  of the density. These auxiliary states, together with an affine shift of
  the numerical state, provide local bounds needed for maintaining
  well-balancing and invariant domain preserving properties of the method.
  The numerical method is then verified and validated with analytic
  solutions, well-balancing tests, and typical benchmark problems for
  atmospheric flows.
\end{abstract}

\begin{keywords}
  Atmospheric flows, Euler equations, invariant-domain preserving,
  structure preserving, well-balanced, high-order accuracy.
\end{keywords}

\begin{AMS}
  35L65, 65M12, 65M60, 76M10
\end{AMS}

\section{Introduction}\label{sec:introduction}
In this work, we consider the atmospheric Euler equations in the
\emph{potential temperature formulation} with a gravitational source
term. This formulation allows a direct comparison of the temperature
at different altitudes since the potential temperature is invariant under
adiabatic changes in pressure \cite{curry2015saturated}. This system
provides a framework for simulating complex fluid flow in the atmosphere
under the influence of gravity \cite{artiano2025structurepreserving,
chertock2023topography}. The mathematical model is a foundation for numerical weather
prediction and climate models, where accurately representing fluid dynamics
across a wide range of spatial scales is essential
\cite{GIRFOGLIO2025106510}.

Although large-scale atmospheric phenomena are largely hydrostatic in
nature, gravity waves and other important mesoscale processes are
fundamentally time dependent and, thus,
non-hydrostatic~\cite{lou1998mesoscale}. Accurately capturing these effects
therefore requires moving beyond hydrostatic models towards full
time-dependent formulations based on the Navier Stokes or Euler equations
\cite{giraldo2008mesoscale, marras2016review}. Importantly, even within
such fully non-hydrostatic frameworks, it remains essential to preserve the
hydrostatic equilibrium as a \emph{steady state at rest}. This property is
crucial for accurately capturing atmospheric dynamics, as the balance
between the pressure gradient and gravitational forces fundamentally
determines how motion in the atmosphere develops and evolves. The
difficulty of maintaining such a \emph{well-balancing} property is
compounded by the need to maintain important physical invariants of the
fluid throughout the computation, such as positivity of density and
temperature~\cite{zhang2021high}. Consequently, the development of robust,
efficient, and physically consistent numerical schemes remains an active
area of research~\cite{artiano2025structurepreserving, chertock2023topography, navasmontilla2024turbulent}.

The objective of this paper is to introduce a discretization of the
atmospheric Euler equations in potential temperature formulation that is
\emph{invariant domain preserving} (IDP) and \emph{well-balanced with
respect to rest states}. In other words, we aim to produce a scheme that
ensures quantities like density and potential temperature remain in their
physically relevant ranges, and that steady state solutions are preserved.
The discretization is based on continuous finite elements and is formulated
for general unstructured meshes. To accomplish this, we develop an
algebraic framework for constructing discrete auxiliary states that
simultaneously preserve the hydrostatic equilibria and stay within the
admissible set of the PDE. We show that using a hydrostatic reconstruction
of the numerical density and numerical state (Def.~\ref{def:star_states})
combined with an appropriate affine shift (Eqn.~\ref{eq:shifted_bar_state})
allows us to construct a low-order approximation that is invariant domain
preserving and well-balanced. This approximation is shown to be a
consistent approximation of the underlying PDE (Prop.~\ref{prop:consistency})
to $\calO(h)$ accuracy, where $h$ is the spatial mesh size. We show that
the first-order scheme can be algebraically re-written as a convex
combination of the numerical state and the novel auxiliary states with the
affine shift (Lemma~\ref{lemma:convex_combination}). Through this convex
combination, we extract local bounds (Sec.~\ref{sec:local_bounds}) which
are used as a safeguard for our higher-order approximation without
affecting the well-balancing property. The final methodology is shown to be
$\calO(h^2)$ accurate, invariant domain preserving and well-balanced with
respect to rest states (Theorem~\ref{thm:final_theorem}). We verify the methodology with
convergence tests, well-balancing problems and validate with standard
benchmarks for atmospheric flows.

\subsection{Related works}

Recent numerical methods proposed for approximating Euler equations with a
gravitational source in the atmosphere include discontinuous Galerkin and
spectral element methods
\cite{artiano2025structurepreserving,francis2004spec,
giraldo2008mesoscale}. Various finite difference and finite volume
discretizations, as well as WENO and TENO reconstructions
\cite{chandrashekar2015finitevolume, marras2016review, navasmontilla2023atmospheric,
navasmontilla2024turbulent} have been proposed. Importantly, some of these
works additionally incorporate well-balancing in their numerical method,
see \cite{chertock2023topography, navasmontilla2023atmospheric,
li2021gravitational}. A high-order DG method for the atmospheric Euler
equations in the energy formulation was discussed in~\cite{zhang2021high}
which is positivity-preserving and well-balanced for a general equilibrium.
Somewhat related developments for the shallow water equations provide
further insight into structure-preserving discretizations with an
additional emphasis on maintaining an invariant domain preserving property
\cite{azerad2017shallow, guermond2025erk}. To the best of the authors'
knowledge, a rigorous enforcement of both invariant domain preservation and
well-balancing for the atmospheric Euler equations in the potential
temperature formulation has not been previously discussed.

\subsection{Outline of the paper}
The paper is organized as follows. We introduce the model problem, \ie the
atmospheric Euler equations in Section~\ref{sec:model_problem}, as well as
important thermodynamic and mathematical properties. In
Section~\ref{sec:approximation} we give brief details for the finite
element setting considered in this work. Then, in
Section~\ref{sec:star_states}, we define what it means to be well-balanced
in the discrete setting and define the hydrostatic reconstruction. Then, in
Section~\ref{sec:low_order}, we introduce the low-order method and show
that it can be written as a convex combination of the  auxiliary states
based on the hydrostatic states and a novel affine shift in the style
of~\citep{guermond2025erk}. The well-balancing and IDP properties follow
naturally from the definition of the shifted auxiliary states and
hydrostatic reconstruction. This convex combination allows us to extract
local bounds used to correct our higher-order approximation in
Section~\ref{sec:high_order} to ensure it maintains the IDP properties.
Finally, we present numerical illustrations in Section \ref{sec:results}
where we verify and validate the numerical method.  For the sake of
completeness, we also include the full solution to the Riemann problem for
this set of equations in Appendix~\ref{sec:wave_structure}
and~\ref{sec:riemann_problem}.


\section{Preliminaries}
\label{sec:model_problem}
Let $D\subset\polR^d$ be a polygonal domain where $d\geq 1$ is the spatial
dimension. Let $\rho$ denote the fluid density, $\bbm$ the momentum, and
$E$ the total energy of the system. Under the assumption of an adiabatic
atmosphere (\ie no heat exchange in the atmosphere), the Euler equations
supplemented with a linear gravity source term can be written as follows:
\begin{equation}\label{euler}
  \begin{cases}
    \begin{aligned}
      \partial_t\rho + \DIV(\bbm) &= 0,
      \\
      \partial_t\bbm + \DIV\big(\rho^{-1}\bbm \otimes \bbm +
      p\polI_d\big) &= -\rho g \mathbf{\hat{e}}_{d},
      \\
      \partial_t E + \DIV(\bv (E + p))&=-\rho g (\bv\SCAL \mathbf{\hat{e}}_{d}),
    \end{aligned}
  \end{cases}
\end{equation}
where $\polI_d$ is the $d\times d$ dimensional identity matrix and
$\mathbf{\hat{e}}_{d}$ is the unit-normal in the direction of $x_d$. Here,
$x_d$ is the $d$-th component of the cartesian coordinate $\bx\eqq(x_1,
\dots, x_d)$; it holds true that $\GRAD x_d = \mathbf{\hat{e}}_{d}$. Here,
$g$ is the gravitational acceleration. Let $e\eqq 1/\rho\left(E -
\tfrac12\rho\|\bv\|^2\right)$ be the specific internal energy. We assume
that the pressure is given by the ideal gas equation of state: $p=(\gamma -
1) \rho e$. With this assumption, the temperature of the system is given by
$T=\frac{e}{c_v}$. Here, $\gamma\eqq \frac{c_p}{c_v}$ is the ratio of
specific heat capacities. 

For atmospheric flow applications, it is common to recast the compressible
Euler equations above in the \textit{potential temperature} formulation.
This formulation is a result of the adiabatic assumption and the first law
of thermodynamics \cite{curry2015saturated}. The implication is that the energy equation
can be swapped out for the evolution of the specific entropy, or
equivalently, the potential temperature \cite{lilly1962derivation}. Using the ideal gas
equation of state and the first law of thermodynamics, the potential
temperature can be defined as follows \cite{ghosh2016benchmarks}:
\begin{equation}\label{eq:p_t}
    \theta \eqq T\bigg(\frac{P_0}{p}\bigg)^\frac{R}{c_p} ,
\end{equation}
where $P_0$ is a given reference pressure value and $R$ is the specific
gas constant for dry air which can be expressed as: $R=c_v(\gamma - 1)$. Now, let
$\bu(\bx,t) \eqq (\rho,\bbm, \rho\theta)\tr$ denote the state vector of
conserved quantities. Then, given some initial data $\bu_0(\bx) \eqq
\big(\rho_0, \bbm_0, (\rho\theta)_0\big)(\bx)$ at initial time $t_0$, we
seek solutions that solve the following system in a weak sense:
\begin{equation}\label{model_problem}
  \begin{cases}
    \begin{aligned}
      \partial_t\rho + \DIV(\bbm) &= 0,
      \\
      \partial_t\bbm + \DIV\big(\rho^{-1}\bbm \otimes \bbm +
      p(\bu)\polI_d\big) &= -\rho g \mathbf{\hat{e}}_{d},
      \\
      \partial_t (\rho \theta) + \DIV(\bv \rho \theta)&=0.
    \end{aligned}
  \end{cases}
\end{equation}
In this formulation, the pressure is given by:
\begin{equation}\label{eq:equation_of_state}
  p(\bu) \eqq C_{\text{eos}}\big(\rho\theta\big)^\gamma, \text{ with }
  C_{\text{eos}}:= P_0\Big(\frac{R}{P_0}\Big)^\gamma,
\end{equation}
For notation purposes, we define the short-hand notation for the hyperbolic flux as: $\bm{f}(\bu)\eqq \left(\bbm,\rho^{-1}\bbm\otimes\bbm + p(\bu)\polI_d,\bv\rho\theta\right)\tr$. 
We also introduce the pressureless gas flux: $\bm{g}(\bu)\eqq \left(\bbm,\rho^{-1}\bbm\otimes\bbm,\bv\rho\theta\right)\tr$ which will be used in the finite element discretization in \S\ref{sec:FEM}.

\subsection{Structural properties}
We briefly summarize relevant structural properties of the atmospheric
Euler equations in potential temperature formulation~\eqref{model_problem}
that will be used in the numerical approach for constructing a robust
approximation technique.
\begin{definition}[Admissible set and invariant domain]
  \label{def:admissible_set}
  The admissible set of the atmospheric Euler equations \eqref{model_problem}
  reads
  \begin{equation}
    \mathcal{A} := \{\mathbf{u} = (\rho,\mathbf{m},\rho\theta)^T
    \in\R^{d+2} : \rho > 0,\,\theta > 0\}.
  \end{equation}
  Furthermore, we call any convex subset $\mathcal{B}\subset\mathcal{A}$ an
  \emph{invariant domain}, provided there holds that the Riemann average
  $\bar\bu(t)$ of the solution to the Riemann problem with (projected) left
  and right states $\bu_L$, $\bu_R\in\mathcal{B}$ lies in $\mathcal{B}$
  again, viz., $\bar\bu(t)\in\mathcal{B}$ \cite{guermond2016invariant}.
\end{definition}
An invariant domain $\mathcal{B}$ is described by a collection of
\emph{quasi-concave} functions \cite{guermond2016invariant}, say
$\Psi^l(\bu)$, $l=1,\ldots,k$ and setting
\begin{equation}
  \label{eq:invariant_domain}
  \calB\eqq\{\bu\in\polR^m : \Psi^l(\bu)\,>\,0,\,l=1,\dots,k\}.
\end{equation}
The concrete local bounds used in the numerical approach are discussed in
Section~\ref{sec:local_bounds}. The introduction of a gravitational source
term in~\eqref{model_problem} requires a careful numerical construction to
preserve so-called rest states. To this end we introduce the following definition.
\begin{definition}[Problem at Isentropic Equilibrium]
  \label{def:problem_at_rest}
  Let $\bu(\bx,t)$ be a solution to~\eqref{model_problem}. Then, we say
  $\bu(\bx, t)$ at time $t$ is at \emph{isentropic equilibrium} if the
  following condition holds true:
  \begin{equation}
    \label{general_problem_at_rest}
    \begin{cases}
      \begin{aligned}
        \bu(\bx,t) &\text{ does not depend on
        }x_1,\ldots,x_{d-1},
        \\
        \bv\cdot\mathbf{\hat{e}}_d &= 0,
        \\
        \theta &=  \mathrm{const.},
        \\
        \GRAD p(\bu) &= -\rho g\,\mathbf{\hat{e}}_d.
      \end{aligned}
    \end{cases}
  \end{equation}
  Note that under appropriate boundary conditions---such as periodic
  boundary conditions in $x_1,\ldots,x_{d-1}$ directions---this implies
  $\partial_t\bu(\bx,t)=\bzero$.
\end{definition}
We make the following observation.
\begin{lemma}[Isentropic equilibrium condition]
  \label{algebraic_balance}
  Let $\bu(\bx,t)$ be at isentropic equilibrium at time $t$. Then,
  using the equation of state~\eqref{eq:equation_of_state}, the last
  condition in \eqref{general_problem_at_rest} can be recast as a matching
  condition on the density and potential temperature:
  \begin{equation}
    \label{eq:problem_at_rest}
    \theta = \mathrm{const.,}
    \qquad
    \rho^{\gamma-1} + \frac{g}{\tilde{c}\theta^\gamma}x_d = \mathrm{const.},
    \qquad
    \tilde{c} := \frac{C_{eos}\gamma}{\gamma-1}.
  \end{equation}
\end{lemma}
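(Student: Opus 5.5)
The plan is to insert the equation of state~\eqref{eq:equation_of_state} into the hydrostatic balance $\GRAD p(\bu) = -\rho g\,\mathbf{\hat{e}}_d$ of Definition~\ref{def:problem_at_rest}. Then we use that $\theta$ is constant to reduce everything to a first-order relation in the single variable $x_d$. By the first condition in~\eqref{general_problem_at_rest}, $\rho$ and $\theta$ depend only on $x_d$ (and $t$, which is held fixed). The balance therefore reduces to the scalar identity $\partial_{x_d} p = -\rho g$, and the remaining components of the gradient vanish identically.

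The key step is the chain rule. Since $\theta$ is a positive constant, $p = C_{\text{eos}}\theta^\gamma \rho^\gamma$, so
\begin{equation*}
  \partial_{x_d} p = C_{\text{eos}}\theta^\gamma \gamma \rho^{\gamma-1}\partial_{x_d}\rho .
\end{equation*}
Setting this equal to $-\rho g$ and dividing by $\rho>0$ (admissibility, Definition~\ref{def:admissible_set}) gives
\begin{equation*}
  C_{\text{eos}}\theta^\gamma \gamma \rho^{\gamma-2}\partial_{x_d}\rho = -g .
\end{equation*}
We then recognize $\rho^{\gamma-2}\partial_{x_d}\rho = \tfrac{1}{\gamma-1}\partial_{x_d}\rho^{\gamma-1}$, which holds because $\gamma>1$. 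This yields
\begin{equation*}
  \tilde c\,\theta^\gamma\,\partial_{x_d}\rho^{\gamma-1} = -g,
  \qquad
  \tilde c = \frac{C_{\text{eos}}\gamma}{\gamma-1}.
\end{equation*}
Dividing by $\tilde c\theta^\gamma$ and using that $\theta$ is constant, this is
\begin{equation*}
  \partial_{x_d}\Big(\rho^{\gamma-1} + \frac{g}{\tilde c\theta^\gamma}x_d\Big)=0 .
\end{equation*}
The bracketed quantity has zero derivative in $x_d$ and no dependence on the other coordinates. It is therefore constant on the (connected) domain $D$, which is the claim~\eqref{eq:problem_at_rest}.

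For the phrase ``recast'', I would also note the converse: reading the computation backwards, the matching condition together with $\theta=$ const implies $\GRAD p=-\rho g\,\mathbf{\hat{e}}_d$. The conditions are thus equivalent given the first three lines of~\eqref{general_problem_at_rest}.

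The only real obstacle is regularity. In the weak-solution setting, $\rho$ must be differentiable in $x_d$ (or at least $W^{1,1}$) for the chain rule to apply. I would handle this by noting that the balance forces $p$ to be Lipschitz in $x_d$, because $\rho$ is bounded. Since $\theta$ is constant and positive, $\rho = (p/(C_{\text{eos}}\theta^\gamma))^{1/\gamma}$ inherits this regularity, and the chain rule for Sobolev functions then justifies the manipulation. Connectedness of $D$ is needed to pass from a vanishing gradient to a global constant.
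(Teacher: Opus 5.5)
Your proposal is correct and is the computation the paper has in mind. The paper states this lemma as an observation without a written proof, and your chain-rule step is the identity $\nabla p = \tilde c\,\rho\theta\,\nabla(\rho\theta)^{\gamma-1}$ (which equals $\tilde c\,\rho\,\theta^\gamma\nabla\rho^{\gamma-1}$ for constant $\theta$) that the paper invokes in the proof of Proposition~\ref{prop:consistency}; your remarks on connectedness of $D$ and on regularity of weak solutions go beyond the paper, which tacitly assumes a smooth equilibrium.
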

A common example of a state at isentropic equilibrium is \cite{GROSHEINTZLAVAL2019324,ghosh2016benchmarks} 
\begin{equation}
  \label{adiabatic_eq}
  \begin{cases}
    \begin{aligned}
      \bv\cdot\mathbf{\hat{e}}_d = 0,
      \quad
      \bv\cdot\mathbf{\hat{e}}_1, ...,\bv\cdot\mathbf{\hat{e}}_{d-1} = const.
      \quad \theta = 300,
      \\[0.5em]
      T = \theta - \frac{g x_d}{\gamma c_{\text{v}}},
      \quad
      p = P_0 \Big(\frac{T}{\theta}\Big)^{\gamma/(\gamma-1)},\quad \rho =
      \frac{p}{R T}.
    \end{aligned}
  \end{cases}
\end{equation}

\begin{remark}[Isentropic versus adiabatic equilibrium]
  The rest state with $\theta=\mathrm{const.}$ is commonly referred to as
  ``adiabatic equilibrium'' in the literature \cite{li2021gravitational, navasmontilla2023atmospheric}. Though, it is
  true that an isentropic equilibrium also assumes an adiabatic atmosphere,
  it does not need to hold true that an adiabatic atmosphere is also
  isentropic. For this reason, we prefer to refer to conditions
  \eqref{eq:problem_at_rest} and \eqref{adiabatic_eq} as \emph{isentropic}
  conditions.
\end{remark}

\begin{remark}[Isothermal equilibrium]~\label{rem:isothermal}
  For practical applications of atmospheric flows, a second hydrostatic
  equilibrium condition besides the isentropic equilibrium is sometimes
  considered, where the temperature $T$ is taken to be constant in the
  whole domain; see,
  e.\,g.,~\cite{navasmontilla2023atmospheric,li2021gravitational}. Such an
  equilibrium condition is called \emph{isothermal equilibrium}.
  For recovering exact well-balancing with respect to isothermal
  equilibrium, a total energy formulation with a modified hydrostatic
  reconstruction is more suitable. Since this requires a different
  discretization approach, we will not focus on preserving the isothermal
  equilibrium exactly. However, we show in Section~\ref{isothermal_test}
  that the scheme proposed here does satisfy the isothermal equilibrium up
  to $\calO(h^2)$.
\end{remark}

\section{Well-balanced, invariant-domain preserving approximation}
\label{sec:approximation}
We now introduce a fully discrete approximation technique for
\eqref{model_problem} based on continuous finite elements. We introduce the
concept of \emph{discrete well-balancing} in Section~\ref{sec:star_states},
\ie the approximation maintains the problem-at-rest property
(Definition~\ref{def:problem_at_rest}) on the discrete level. To this end,
we construct discrete auxiliary states which encode the hydrostatic
equilibrium. These states, combined with an affine shift, allow us to
derive a first-order method in Section~\ref{sec:low_order} which is a
convex combination of said states. The methodology is proved to be
consistent, well-balanced and invariant-domain preserving. This convex
combination further allows us to extract bounds used as a safeguard for our
higher-order method in Section~\ref{sec:high_order}. The final algorithm is
formally second-order accurate in space.

\subsection{Continuous finite element setting}\label{sec:FEM}
The spatial approximation adopted in this paper is based upon the invariant
domain preserving methodology \citep{guermond2016invariant}. The low order
method generalizes the algorithm presented in~\citep[p.\,163]{lax1954}, as
it is discretization independent. Full details of this method for
continuous and discontinuous finite elements, and finite volume
approximations are given in~\citep{guermond2019invariant}. The approaches for
the compressible Euler equations are proposed in
~\citep{guermond2018second, clayton2022invariant, clayton2023robust,
clayton2025approximation}. For the sake of completeness, we briefly
summarize the finite element setting that will be used for our
approximation.

Let $D\subset\mathbb{R}^d$ be a domain with polygonal boundary, discretized
with a shape and form regular mesh $\mathcal T_h$ consisting of
quadrilateral (for $d=2$) or hexahedral (for $d=3$) cells $K$, covering
the domain $D$. Let $\{\varphi_i\}_{i\in\calV}$ denote the nodal Lagrange
basis of the continuous finite element space constructed with bi- (tri-)
linear elements on the mesh $\mathcal T_h$. Here $\calV$ is an index
enumerating the spatial degrees of freedom. Let $\bx_i$ denote the
collocation point associated with the $i$th degree of freedom, and
introduce a short notation for the $d$th component, $z_i:=(\bx_i)_d$.

At time $t^n$ a discrete approximation $\bu_h(t^n,\bx)$ to the solution of
\eqref{model_problem} is constructed by setting
$\bu_h(\bx)\eqq\sum_{i\in\calV}\bsfU_i\upn\varphi_i(\bx)$, where the
collocated state vector is $\bsfU_i\upn\eqq(\varrho_i\upn, \bsfM_i\upn,
\Theta_i\upn)\tr$. Furthermore, we introduce discrete primitive quantities
as follows: $\bsfV_i\upn\eqq\bsfM_i\upn / \varrho_i\upn$ and
$\vartheta_i\upn\eqq\Theta_i\upn / \varrho_i\upn$. We introduce a lumped
mass matrix $\polM\upL=(m_i)_i$ and a consistent mass matrix
$\polM\upH=(m_{ij})_{ij}$ with entries
\begin{align*}
  m_i\eqq\int_D\varphi_i(\bx)\diff \bx,
  \qquad
  m_{ij}\eqq\int_D\varphi_i(\bx)\varphi_j(\bx)\diff\bx.
\end{align*}
Furthermore, a vector valued matrix
\begin{align*}
  \cij\eqq\int_D\varphi_i(\bx)\GRAD\varphi_j(\bx)\diff\bx\quad\in\,\polR^d
\end{align*}
is introduced for approximating the divergence operator. Let
$\calI(i)\subset\calV$ denote the stencil of degree of freedom $i$, \ie the
set of all degrees of freedom $j$ coupling with $i$:
$\calI(i)\eqq\big\{j\in\calV \;:\;m_{ij}\not=0\big\}$. As a final requisite
we assume that the basis $\{\varphi_i\}_{i\in\calV}$ forms a partition of
unity, \ie $\sum_{i\in\calV}\,\varphi_i(\bx)\,\equiv\,1$. Crucially this
implies
\begin{align}
  \label{eq:sum_properties}
  m_i = \ssum m_{ij},
  \quad
  \sum_{j\in\calI(i)}\cij\;=\;\bzero,
  \quad\text{and}\quad
  \sum_{i\in\calI(j)}\cij\;=\; \bn_j,
\end{align}
which is required for the discrete scheme to be conservative. Here,
\begin{align}
  \label{eq:weighted_normal}
  \bn_j\eqq \int_{\partial\Omega}\varphi_j(\bx)\bn(\bx)\diff o_{\bx},
\end{align}
is a weighted normal associated with the $j$th degree of freedom. The quantity 
$\bn(\bx)$ denotes the outward facing unit normal of $\partial\Omega$.

\subsection{Discrete well-balancing and hydrostatic reconstruction}
\label{sec:star_states}
We now introduce an algebraic approach for maintaining the problem-at-rest
property (Definition~\ref{def:problem_at_rest}) at the discrete level. To
this end, we adopt an idea from the shallow water literature for
well-balancing that uses a so-called \emph{hydrostatic reconstruction} of
the density (which is analogous to the water depth) to enforce well-balancing on the discrete
level~\cite{azerad2017shallow, guermond2025erk, audusse_etal_2004}.
Following the general philosophy outlined in~\cite{audusse_etal_2004}, we
introduce a discrete hydrostatic reconstruction of the density and
corresponding hydrostatic star state as follows.
\begin{definition}[Hydrostatic star state]\label{def:star_states}
   Let $i\in\calV$ and let $\bsfU_i= (\rhoi, \bsfM_i, \Theta_i)\tr$ be a
   given state. Then, we define the hydrostatic reconstruction
   $\bsfU_i^{\ast,j}$ of $\bsfU_i$ with respect to $j\in\calI(i)$ by
   setting
  \begin{gather}\label{eq:star_states}
    \bsfU_i^{\ast,j}\eqq
    \left({\begin{array}{c}
      \rhostarij
      \\
      \bsfV_i\rhostarij
      \\
      \thetai\rhostarij
    \end{array}}\right),
    \qquad\text{with}
    \\
    \rhostarij \eqq \Big(\varrho_i^{\gamma-1} +
    \frac{g}{\tilde{c}\thetabar\thetai^{\gamma-1}}\max\big(0, z_i -
    z_j\big)\Big)^{1/(\gamma - 1)},
  \end{gather}
  where $\thetabar:= \frac{1}{2}(\thetai + \thetaj)$. Recall that
  $z_i$ denotes the $d$th component of the collocation point $\bx_i$, and
  $\tilde{c}=C_{eos}\gamma/(\gamma-1)$. Note that we suppress the time
  index $n$ when it is clear from context.
\end{definition}
\begin{definition}[Discrete problem at equilibrium and well-balancing]
  \label{def:discrete_equilibrium}
  \begin{itemize}
    \item[(i)]
      We say that a discrete state $\bu_h$ is \emph{at equilibrium} if
      for all $i\in\calV$ we have
      \begin{align}
        \label{eq:discrete_problem_at_rest}
        \begin{cases}
          \bV_i\SCAL\hat{\mathbf{e}}_{d} = 0,
          \\
          \vartheta_i=\vartheta_j\quad\text{for all }j\in\calI(i),
          \\
          \rhostarij = \rhostarji\quad\text{for all }j\in\calI(i).
        \end{cases}
      \end{align}
    \item[(ii)]
      A mapping $S: P(\mathcal{T}_h)\to P(\mathcal{T}_h)$ is
      said to be a \emph{well-balanced scheme} if  $S(\bu_h) =
      \bu_h$ holds for all discrete states $\bu_h$ at equilibrium.
  \end{itemize}
\end{definition}

\begin{lemma}[Hydrostatic reconstruction]
  Let $\bu(\bx,t)$ be a solution at isentropic equilibrium satisfying
  Def.~\ref{def:problem_at_rest}. Let
  $\bu_h(\bx)=\mathcal{I}_h\bu(\bx,t_n)$ be the Lagrange interpolant of
  $\bu(\bx,t)$ at time $t_n$. Then, $\bu_h$ is already a discrete state at
  equilibrium satisfying Def.~\ref{def:discrete_equilibrium}.
\end{lemma}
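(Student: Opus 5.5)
We check the three conditions of Def.~\ref{def:discrete_equilibrium}(i) one at a time. Throughout, $\bu_h=\mathcal{I}_h\bu(\cdot,t_n)$ is the Lagrange interpolant, so the nodal values are point evaluations. In particular $\varrho_i=\rho(\bx_i,t_n)$, $\bsfV_i=\bv(\bx_i,t_n)$, and $\vartheta_i=\theta(\bx_i,t_n)$.

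\textbf{First two conditions.} These are immediate from Def.~\ref{def:problem_at_rest}. Since $\bv\cdot\hat{\mathbf e}_d=0$ pointwise, $\bsfV_i\cdot\hat{\mathbf e}_d=0$. Since $\theta$ is constant, say $\theta\equiv\theta_0$, we have $\vartheta_i=\vartheta_j=\theta_0$ for all $j\in\calI(i)$. It follows that $\thetabar=\theta_0$ and $\thetabar\,\vartheta_i^{\gamma-1}=\theta_0^\gamma$. The reconstruction formula therefore simplifies to
\[
\big(\rhostarij\big)^{\gamma-1}=\varrho_i^{\gamma-1}+\frac{g}{\tilde c\,\theta_0^{\gamma}}\max(0,z_i-z_j).
\]

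\textbf{Third condition.} By Lemma~\ref{algebraic_balance}, evaluating the relation $\rho^{\gamma-1}+\frac{g}{\tilde c\theta_0^\gamma}x_d=C$ at the nodes gives
\[
\varrho_i^{\gamma-1}+\frac{g}{\tilde c\theta_0^\gamma}z_i=C\qquad\text{for all } i.
\]
This holds because $\rho$ depends only on $x_d$ and the relation is pointwise in $\bx$. We now split into cases according to the sign of $z_i-z_j$; the case is symmetric in $i,j$.

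If $z_i\ge z_j$, then $\max(0,z_i-z_j)=z_i-z_j$ and $\max(0,z_j-z_i)=0$. Hence $(\rhostarij)^{\gamma-1}=\varrho_i^{\gamma-1}+\frac{g}{\tilde c\theta_0^\gamma}(z_i-z_j)$. By the nodal relation, this equals $C-\frac{g}{\tilde c\theta_0^\gamma}z_j=\varrho_j^{\gamma-1}=(\rhostarji)^{\gamma-1}$.

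If $z_i<z_j$, the same computation with the roles of $i$ and $j$ exchanged gives $(\rhostarji)^{\gamma-1}=\varrho_i^{\gamma-1}=(\rhostarij)^{\gamma-1}$.

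In both cases the reconstructed density equals the density at the lower node, $\min(z_i,z_j)$. Taking the positive $(\gamma-1)$-th root, which is valid since $\gamma>1$ and $\rho>0$, yields $\rhostarij=\rhostarji$.

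\textbf{Main subtlety.} The only delicate point is the justification of Lemma~\ref{algebraic_balance} itself, namely that $\GRAD p=-\rho g\hat{\mathbf e}_d$ with $\theta=\theta_0$ integrates to the stated relation. Here $p=C_{\rm eos}\theta_0^\gamma\rho^\gamma$, so
\[
\GRAD p=C_{\rm eos}\theta_0^\gamma\gamma\rho^{\gamma-1}\GRAD\rho=\rho\,\tilde c\,\theta_0^\gamma\,\GRAD\big(\rho^{\gamma-1}\big).
\]
Dividing by $\rho>0$ gives $\GRAD\big(\rho^{\gamma-1}+\frac{g}{\tilde c\theta_0^\gamma}x_d\big)=0$. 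Since the paper states this lemma earlier, we may simply invoke it. Note also that the reconstruction uses the weight $\thetabar\,\vartheta_i^{\gamma-1}$ rather than $\theta^\gamma$. This is exactly why the constancy of $\theta$ at the nodes, established in the second condition, must be used before the third condition can be checked.
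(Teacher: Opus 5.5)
Your proof is correct and follows the paper's own argument. You evaluate the relation of Lemma~\ref{algebraic_balance} at the Lagrange nodes, use $\vartheta_i=\vartheta_j$ to reduce the weight $\thetabar\,\vartheta_i^{\gamma-1}$ to $\theta^\gamma$, and split on the sign of $z_i-z_j$ to obtain $\rhostarij=\rhostarji$; you also explicitly check the first two conditions of Def.~\ref{def:discrete_equilibrium}, which the paper leaves implicit.
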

\begin{proof}
  Owing to Lemma~\ref{algebraic_balance} the continuous solution
  $\bu(\bx,t)$ satisfies \eqref{eq:problem_at_rest} and the identity holds
  true in particular for all Lagrange points implying:
  \begin{equation}
    \rhoi^{\gamma-1} + \frac{g}{\tilde{c}\thetai^\gamma}z_i =
    \rhoj^{\gamma-1} + \frac{g}{\tilde{c}\thetaj^\gamma}z_j \label{dwb},
    \quad\text{for all }i,j\in\calV.
  \end{equation}
  Assume first that $z_j< z_i$. Then,
  \begin{equation*}
      (\rhostarji)^{\gamma -1} - (\rhostarij)^{\gamma -1} =
      \rhoj^{\gamma-1}- \rhoi^{\gamma-1} -
      \frac{g}{\tilde{c}\thetabar\thetai^{\gamma-1}}\big(z_i - z_j\big)= 0,
  \end{equation*}
  by \eqref{dwb} and since $\thetai = \thetaj$. Hence, $\rhostarji =
  \rhostarij$. If $z_j \geq z_i$, then
  \begin{equation*}
    (\rhostarji)^{\gamma -1} - (\rhostarij)^{\gamma -1} =
    \rhoj^{\gamma-1} +
    \frac{g}{\tilde{c}\thetabar\thetaj^{\gamma-1}}\big(z_j - z_i\big) -
    \rhoi^{\gamma-1}= 0,
  \end{equation*}
  again by virtue of \eqref{dwb}. Hence, $\rhostarji = \rhostarij$ in
  either case.
\end{proof}

As in~\cite{azerad2017shallow,guermond2025erk}, we would like to utilize
these hydrostatic star states in our numerical flux for ensuring
well-balancing. This leads us to the following lemma which shows that this
is indeed feasible.
\begin{proposition}[Consistency]
  \label{prop:consistency}
  Let $\bu(\bx,t)$ be a smooth solution to the atmospheric Euler equations
  and let $\bu_h=\mathcal{I}_h\bu(\bx,t_n)$ be the Lagrange interpolant of
  $\bu(\bx,t)$. Then:
  \begin{enumerate}
    \item[(i)]
      The pressure and gravity term admit a first-order approximation as
      follows:
  \end{enumerate}
  \begin{align}
    \label{eq:well_balanced_source_term}
    \int_{D}(\nabla p + \rho g \hat{\mathbf{e}}_{d})\varphi_i \diff x
    \,=\,
    \tilde{c}\rhoi\thetai\sum\limits_{j\in\I(i)}
    \big((\rhostarji\thetaj)^{\gamma-1} -
    (\rhostarij\thetai)^{\gamma-1}\big) \cij \,+\, \mathcal{O}(h).
  \end{align}
  \begin{enumerate}
    \item[(ii)]
    For $\gamma>1$ and under a mild mesh regularity assumption
    \eqref{eq:mesh_assumption}, the pressureless flux $\bm{g}(\bm{u})$
    can be approximated as follows:
  \end{enumerate}
  \begin{align}
    \label{eq:well_balanced_flux}
    \int_{D}\bm{g}(\bm{u})\varphi_i \diff x
    \,=\,
    \sum\limits_{j\in\I(i)} \big(\bm{g}(\ustarij) + \bm{g}(\ustarji)\big)
    \cij \,+\, \mathcal{O}(h),
  \end{align}
  where $i\in\calV$ is an arbitrary interior degree of freedom.
\end{proposition}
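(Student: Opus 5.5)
The plan is to reduce both identities to the standard consistency of the Galerkin gradient operator, $\sum_{j\in\calI(i)} F_j\,\cij=\int_D \varphi_i\,\GRAD(\calI_h F)\diff\bx$. All error terms are understood relative to the scaling $m_i\sim h^d$ and $|\cij|\sim h^{d-1}$, so "$\calO(h)$" means $m_i\,\calO(h)$. For a smooth $F$, $\sum_{j}F(\bx_j)\cij=\int_D\GRAD F\,\varphi_i\diff\bx+m_i\calO(h)$. For (ii) I read the left-hand side as the divergence $\int_D\DIV\bm g(\bu)\,\varphi_i\diff\bx$, which is what the right-hand side approximates. Throughout I use $\sum_{j}\cij=\bzero$ at interior nodes (from \eqref{eq:sum_properties}) to add or subtract any $j$-independent quantity freely.

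\textbf{Part (i).} The key observation is an exact algebraic identity. By Definition~\ref{def:star_states},
\begin{equation*}
(\rhostarji\thetaj)^{\gamma-1}=(\rhoj\thetaj)^{\gamma-1}+\frac{g}{\tilde c\,\thetabar}\max(0,z_j-z_i),
\end{equation*}
and symmetrically for $(\rhostarij\thetai)^{\gamma-1}$, with the same $\thetabar$ since it is symmetric in $i,j$. Subtracting and using $\max(0,a)-\max(0,-a)=a$ gives
\begin{equation*}
(\rhostarji\thetaj)^{\gamma-1}-(\rhostarij\thetai)^{\gamma-1}
=(\rhoj\thetaj)^{\gamma-1}-(\rhoi\thetai)^{\gamma-1}+\frac{g}{\tilde c\,\thetabar}(z_j-z_i).
\end{equation*}
The right-hand side of \eqref{eq:well_balanced_source_term} therefore splits into a pressure sum and a gravity sum.

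For the pressure sum, I use $\GRAD p=C_{eos}\gamma(\rho\theta)^{\gamma-1}\GRAD(\rho\theta)=\tilde c\,\rho\theta\,\GRAD(\rho\theta)^{\gamma-1}$. Then $\tilde c\rhoi\thetai\sum_j(\rhoj\thetaj)^{\gamma-1}\cij$ approximates $\int_D\GRAD p\,\varphi_i\diff\bx$ up to $m_i\calO(h)$. This combines consistency of the gradient operator with the fact that freezing the smooth factor $\rho\theta$ at $\bx_i$ costs $\calO(h)$ on the support of $\varphi_i$.

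For the gravity sum, I write $1/\thetabar=1/\thetai+\calO(h)$. Since $|z_j-z_i|=\calO(h)$, the perturbation contributes $\calO(h^2)\sum_j|\cij|=m_i\calO(h)$. The main term is $g\rhoi\sum_j(z_j-z_i)\cij=g\rhoi\sum_j z_j\cij=g\rhoi\int_D\varphi_i\GRAD x_d\diff\bx=g\rhoi m_i\hat{\mathbf e}_d$, which is exact because $x_d$ is reproduced by the $Q_1$ space. This matches $\int_D\rho g\hat{\mathbf e}_d\varphi_i\diff\bx$ up to $m_i\calO(h)$.

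\textbf{Part (ii).} I start by using $\sum_j\bm g(\bsfU_i)\cij=\bzero$ to write $\sum_j\big(\bm g(\bsfU_i)+\bm g(\bsfU_j)\big)\cij=\sum_j\bm g(\bsfU_j)\cij$. The latter is consistent with $\int_D\DIV\bm g\,\varphi_i\diff\bx$. It then remains to show
\begin{equation*}
\sum_j\big(\bm g(\ustarij)-\bm g(\bsfU_i)\big)\cij+\sum_j\big(\bm g(\ustarji)-\bm g(\bsfU_j)\big)\cij=m_i\calO(h).
\end{equation*}
Because $\gamma>1$ and $\rho$ is bounded away from zero, the map $s\mapsto(\rhoi^{\gamma-1}+s)^{1/(\gamma-1)}$ is smooth near $s=0$. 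Hence $\rhostarij-\rhoi=\kappa_i\max(0,z_i-z_j)+\calO(h^2)$ with $\kappa_i=g\rhoi^{2-\gamma}/(\tilde c(\gamma-1)\thetai^{\gamma})$ smooth in $\bx_i$ (after replacing $\thetabar$ by $\thetai$ at $\calO(h)$ relative cost). Since $\ustarij$ is $\bsfU_i$ with only the density rescaled, the flux satisfies $\bm g(\ustarij)-\bm g(\bsfU_i)=(\rhostarij-\rhoi)\,\bm g(\bsfU_i)/\rhoi$. The $\calO(h^2)$ remainders are harmless. The leading terms, however, are $\calO(h)$ per edge, and summed against $|\cij|\sim h^{d-1}$ they would naively give only $\calO(h^d)$, i.e.\ an $\calO(1)$ relative error.

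\textbf{Main obstacle.} The step I expect to be hardest is recovering the missing factor of $h$ in these leading terms, and this is where the mesh regularity assumption \eqref{eq:mesh_assumption} must enter. The first sum reduces, at leading order, to $\kappa_i\,\bm g(\bsfU_i)/\rhoi\cdot\sum_j\max(0,z_i-z_j)\cij$. For the second sum, I Taylor-expand the coefficient $\kappa_j\bm g(\bsfU_j)/\rhoj$ around $\bx_i$, reducing it to $\kappa_i\bm g(\bsfU_i)/\rhoi\cdot\sum_j\max(0,z_j-z_i)\cij$ plus $m_i\calO(h)$. I would then take \eqref{eq:mesh_assumption} to be a local symmetry condition ensuring $\sum_j\max(0,\pm(z_i-z_j))\cij=\calO(h^{d+1})$, as holds on translation-invariant patches by pairing opposite neighbours. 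Alternatively, if the assumption only controls the combination, I would use that the two sums add up to $\sum_j|z_i-z_j|\cij$ and require only that this quantity be $\calO(h^{d+1})$. Either way, the $\calO(h^d)$ contributions cancel and leave the claimed $m_i\calO(h)$ error.
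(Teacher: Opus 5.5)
Part (i) matches the paper's argument. Both use the same exact identity for $(\rhostarji\thetaj)^{\gamma-1}-(\rhostarij\thetai)^{\gamma-1}$ and split the sum into a pressure part and a gravity part. The pressure part is handled by gradient consistency. The gravity part uses $\thetabar^{-1}=\thetai^{-1}+\calO(h)$ and $\sum_j(z_j-z_i)\cij=m_i\hat{\mathbf e}_d$. In part (ii) your reduction to leading terms also matches the paper. The paper writes $\bm g(\ustarij)=\bm g(\bsfU_i)\rhostarij/\rhoi$ and expands with coefficient $\frac{g}{\gamma}\frac{\rhoi}{p(\bsfU_i)}$, which is your $\kappa_i/\rhoi$.

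You must discard your first reading of the mesh assumption, because no mesh satisfies it. The two one-sided sums differ by exactly $\sum_j(z_j-z_i)\cij=m_i\hat{\mathbf e}_d$, the identity you used in part (i), so they cannot both be $\calO(h^{d+1})$. On a uniform $Q_1$ mesh they equal $\pm\tfrac12 m_i\hat{\mathbf e}_d$. Pairing opposite neighbours cancels a term of one one-sided sum against a term of the \emph{other} sum, never within a single sum.

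Your alternative is the correct condition, and it is what \eqref{eq:mesh_assumption} provides. That assumption matches the neighbours $j$ above $\bx_i$ pairwise with the neighbours $k$ below $\bx_i$ so that $(1+\calO(h))\,h_{ji}\cij=-h_{ik}\mathbf{c}_{ik}$, where $h_{ji}=\max(0,z_j-z_i)$. Summing over pairs gives $\sum_j|z_i-z_j|\cij=m_i\calO(h)$.

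The paper uses the matching pair by pair. Each pair leaves a remainder $\big(\bm g(\bsfU_j)\kappa_j/\rhoj-\bm g(\bsfU_i)\kappa_i/\rhoi\big)h_{ji}\cij+\calO(h^2)|\cij|=\calO(h^2)|\cij|$. You instead freeze the coefficient at $\bx_i$ first, which is the same Taylor step, and then need only the aggregated cancellation.

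With the first option deleted, your proof of (ii) is correct. It works under a weaker hypothesis than the paper's pairwise matching; for example, it does not require equally many neighbours above and below $\bx_i$.
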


\begin{proof}
    (i) Fix $i\in\calV$. Assume the solution is smooth and assume that
    $\bsfU_i\upn\in\calA$. Observe that for both, $z_j< z_i$ or $z_j >
    z_i$, we have that
    \begin{equation*}
      \tilde{c}\big((\rhostarji\thetaj)^{\gamma-1} -
      (\rhostarij\thetai)^{\gamma-1}\big)
      \,=\,
      \tilde{c}\big((\rhoj\thetaj)^{\gamma-1} -
      (\rhoi\thetai)^{\gamma-1}\big) + \frac{g}{\thetabar}\big(z_j -
      z_i\big).
    \end{equation*}
    Thus,
    \begin{multline*}
      \tilde{c}\rhoi\thetai\sum\limits_{j\in\I(i)}\big((\rhostarji\thetaj)^{\gamma-1}
      - (\rhostarij\thetai)^{\gamma-1}\big)\cij
      \\ \,=\,
      \tilde{c}\,\rhoi\thetai\sum\limits_{j\in\I(i)}
      \big((\rhoj\thetaj)^{\gamma-1} - (\rhoi\thetai)^{\gamma-1}\big)\cij
      + \rhoi g \sum\limits_{j\in\I(i)} \frac{\thetai}{\thetabar} \big(z_j
      - z_i\big)\cij.
    \end{multline*}
    By virtue of~\cite[Lem.~2.1]{azerad2017shallow} we have that
    $\tilde{c}\rhoi\thetai\sum_{j\in\I(i)} \big((\rhoj\thetaj)^{\gamma-1} -
    (\rhoi\thetai)^{\gamma-1}\big)\cij$ is a second order approximation to
    $\int_{D}\tilde{c}(\rho\theta)\nabla(\rho\theta)^{\gamma-1} \varphi_i
    \diff x=\int_{D}\nabla p\varphi_i\diff x$.
    For the second term we use the fact that $\thetaj = \thetai +
    \mathcal{O}(h)$, $\frac{2\thetai}{\thetai + \thetaj}= 1 +
    \mathcal{O}(h)$, and $|z_j - z_i| \leq h$, and estimate as
    follows:
    \begin{multline*}
      \rho_ig
      \sum_{j\in\I(i)} \frac{2\thetai}{\thetai + \thetaj}
      \big(z_j-z_i\big)\cij
      =
      \underbrace{\rhoi
      g\sum_{j\in\I(i)}\big(z_j-z_i\big)\cij}_{=:\,\text{(i)}}
      +
      \underbrace{\sum_{j\in\I(i)}\cij\, \mathcal{O}(h^2)}_{=:\,\text{(ii)}}.
      \\ \,=\,
      \underbrace{\int_{D}(\rho g\,\hat{\mathbf e}_d)\varphi_i \diff
      x\,+\,\mathcal{O}(h^2)}_{=\,\text{(i)}}
      \,+\,\underbrace{\mathcal{O}(h)}_{=\,\text{(ii)}}.
    \end{multline*}
    The estimate for $\text{(ii)}$ stems from the fact that
    $\mathcal{O}(h^2)\norm{\mathbf{c}_{ij}}_{\ell^2} = m_i\mathcal{O}(h)$.

    (ii) We have
    \begin{align*}
      \bm{g}(\ustarij)
      \,=\,
      \bm{g}(\bsfU_i)\,\frac{\rhostarij}{\rhoi}
      \,=\,
      \bm{g}(\bsfU_i)\,\Big(1+g\frac{\gamma-1}{\gamma}\,\frac{\rho_i\thetai}{\thetabar\,p(\bsfU_i)}
      \,\max(0,z_i-z_j)\Big)^{1/(\gamma-1)}.
    \end{align*}
    Now, introducing the short notation $h_{ij}:=\max(0,z_i-z_j)w$, and
    using again $\frac{2\thetai}{\thetai + \thetaj}= 1 + \mathcal{O}(h)$,
    this implies
    \begin{align*}
      \bm{g}(\ustarij)\,-\, \bm{g}(\bsfU_i) \;\;=\;\;
      &\bm{g}(\bsfU_i)
      \,
      \Big(\Big(1\;+\;g\,\frac{\gamma-1}{\gamma}\,\frac{\rho_i}{p(\bsfU_i)}\,h_{ij}
      +\mathcal{O}(h_{ij}h)\Big)^{1/(\gamma-1)}\,-\,1\Big)
      \\
      \;\;{=}\;\;
      &
      \bm{g}(\bsfU_i)\,\big(
      \frac{g}{\gamma}\,\frac{\rho_i}{p(\bsfU_i)}\,h_{ij} \,+\,
      \mathcal{O}(h_{ij}h)\big).
    \end{align*}
    Here, we have used the fact that $(1+h)^\alpha=1 \,+\,
    \alpha\,h+\mathcal{O}(h^2)$ for any $\alpha > 0$. This implies for any
    interior degree of freedom $i$:
    \begin{multline*}
      \Big| \sum\limits_{j\in\I(i)} \big(\bm{g}(\ustarij) +
      \bm{g}(\ustarji)\big)\cij
      \,-\, \sum\limits_{j\in\I(i)} \big(\bm{g}(\bsfU_i) +
      \bm{g}(\bsfU_j)\big)\cij \Big|
      \\
      \;=\;
      \Big|\sum\limits_{j\in\I(i)} \big(
      \bm{g}(\bsfU_j)\,\frac{g}{\gamma}\,\frac{\rho_j}{p(\bsfU_j)}h_{ji}
      \,+\,
      \bm{g}(\bsfU_i)\,\frac{g}{\gamma}\,\frac{\rho_i}{p(\bsfU_i)}h_{ij}
      \big)\,\cij +\mathcal{O}\big((h_{ij}+h_{ji})h\big)\,\cij \Big|
      =(\mathrm{iii})
    \end{multline*}
    We now introduce the following technical assumption on the mesh:
    \begin{gather}
      \label{eq:mesh_assumption}
      \begin{cases}
        \begin{gathered}
          \#\big\{j\in\I(i)\,:\,h_{ji}>0\big\}
          \,=\,
          \#\big\{k\in\I(i)\,:\,h_{ik}>0\big\},
          \\
          \text{ and both index sets can be matched up pairwise such that }
          \\
          \big(1+\mathcal{O}(h)\big)\, h_{ji}\,\cij\,=\,-h_{ik}\,\cik
          \text{ for every such pair }(i,k).
        \end{gathered}
      \end{cases}
    \end{gather}
    We then conclude:
    \begin{multline*}
      (\mathrm{iii})
      \;=\;
      \Big|\sum\limits_{\substack{j\in\I(i)\\h_{ji}>0}} \big(
      \underbrace{\bm{g}(\bsfU_j)\,\frac{g}{\gamma}\,\frac{\rho_j}{p(\bsfU_j)}
      \,-\,
      \bm{g}(\bsfU_i)\,\frac{g}{\gamma}\,\frac{\rho_i}{p(\bsfU_i)}}
      _{\,=\,\mathcal{O}(h)}\big)\,h\,\cij
      \\[-0.25em]
      +\;\sum\limits_{j\in\I(i)} \mathcal{O}(h^2)\,\cij \Big|
      \;=\;m_i\,\mathcal{O}(h).
    \end{multline*}
    The statement now follows from the fact that $\sum_{j\in\I(i)}
    \big(\bm{g}(\bsfU_i) + \bm{g}(\bsfU_j)\big)\cij$ is a second-order
    approximation of the pressureless flux.
\end{proof}

\subsection{Low order method}
\label{sec:low_order}

We now introduce the low-order method. The goal is to utilize the
hydrostatic reconstruction states above in combination with local discrete
auxiliary states to ensure both well-balancing and invariant-domain
preserving properties.

We recall that an essential quantity for constructing a  robust numerical
method is the so called \emph{bar state}, or \emph{auxiliary} state;
see~\cite{hoff1979finite, Harten_Lax_VanLeer_1983, Nessyahu_Tadmor_1990,
guermond2016invariant}:
\begin{equation}\label{eq:bar_states}
  \overline{\bsfU}_{ij}^n := \frac{1}{2}
  \left(\bsfU_i^{n,*,j} + \bsfU_j^{n,*,i}\right) -
  \frac{1}{2\dij\upLn}\left(\bm{f}(\bsfU_j^{n,*,i})-
  \bm{f}(\bsfU_i^{n,*,j})\right)\cij,
\end{equation}
where, following~\cite{guermond2016invariant}, we introduce a
graph-viscosity as follows:
\begin{align}
  \label{eq:dij}
  \dij\upLn := \max\{\hat{\lambda}_\text{max}(\bsfU_i^{n,\ast,j},
  \bsfU_j^{n,\ast,i})\norm{\cij},
  \hat{\lambda}_\text{max}(\bsfU_j^{n,\ast,i},
  \bsfU_i^{n,\ast,j})\norm{\mathbf{c}_{ji}}\}.
\end{align}
Here, $\hat{\lambda}_\text{max}$ is an upper bound on the maximum wave
speed of a local Riemann problem defined by the left state
$\bsfU_i^{n,\ast,j}$ and right state $\bsfU_j^{n,\ast,i}$. We provide
details for computing the maximum wave speed in
Appendix~\ref{sec:riemann_problem}.
The key property that ensures the invariant domain preservation is the fact that the bar
state $\overline{\bsfU}_{ij}^n$ is the \emph{Riemann average} over the
space $[-\frac{1}{2}, \frac{1}{2}]$ at an appropriate time $t^\ast$
depending on $d_{ij}\upLn$; see~\cite[Thm.~4.1]{guermond2016invariant}.

We are now in a position to construct a low-order scheme which is a convex
combination of the state $\bsfU_i\upn$ and discrete auxiliary states both
with an affine shift. We define $p_i^{n,\ast,j} := p(\bsfU_i^{n,\ast,j})$. For
all $i\in\calV$ and $j\in\calI(i)$, we set
\begin{equation}
  \label{eq:convex}
  \bsfU_i\upLnp := (1 + \frac{2\tau d_{ii}\upLn}{m_i})(\bsfU_i^n +
  \frac{\tau}{m_i}\bsfS_i\upLn) + \sum\limits_{j\in\I(i)\setminus \{i\}}
  \frac{2\tau \dij\upLn}{m_i}(\overline{\bsfU}_{ij}^n +
  \frac{\tau}{m_i}\bsfS_i\upLn),
\end{equation}
where the affine shift is defined by:
\begin{align}
  \label{eq:shifted_bar_state}
  &\bsfS_i\upLn :=\sum\limits_{j\in\I(i)}\bsfS_{ij}\upLn,
  \\\notag
  &\bsfS_{ij}\upLn :=-2(\dij\upLn + \bsfV_i^n\SCAL\cij)(\bsfU_i^{\ast,j} - \bsfU_i\upn)
  \\\notag
  &
  \quad+ \left[{\begin{array}{c}
  0
  \\
  \big(p_j^{n,\ast,i} - p_i^{n,\ast,j}\big)\cij-
  \tilde{c}\,\rhoi^n\thetai^n\,\big((\rhoj^{n,\ast,i}\thetaj^n)^{\gamma -1} -
  (\rhoi^{n,\ast,j}\thetai^n)^{\gamma -1}\big)\cij
  \\
  0
  \end{array}}\right].
\end{align}
A straightforward calculation establishes the following lemma.
\begin{lemma}
  \label{lemma:convex_combination}
  The low-order update~\eqref{eq:convex} is algebraically equivalent to:
  \begin{align}
    \label{eq:low_order_scheme}
    m_i\frac{\bsfU_i\upLnp - \bsfU_i^n}{\tau}
    =& \sum\limits_{j\in\mathcal{I}(i)}\mathbf{F}_{ij}\upLn,
    \\\notag
    \mathbf{F}_{ij}\upLn := &-
    (\mathbf{g}(\bsfU_j^{n,\ast,i}) + \mathbf{g}(\bsfU_i^{n,\ast,j}))\cij +
    \dij\upLn(\bsfU_j^{n,\ast,i} - \bsfU_i^{n,\ast,j})
    \\\notag
    &\qquad -
    \left[{\begin{array}{c}
    0
    \\
    \tilde{c}\,\rhoi^n\thetai^n\,\big((\rhoj^{n,\ast,i}\thetaj^n)^{\gamma -1} -
    (\rhoi^{n,\ast,j}\thetai^n)^{\gamma -1}\big)\cij
    \\
    0
    \end{array}}\right].
  \end{align}
\end{lemma}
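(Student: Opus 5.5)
The plan is to expand the right-hand side of \eqref{eq:convex} and compare it term by term with \eqref{eq:low_order_scheme}. I assume the standard convention of the graph-viscosity framework~\cite{guermond2016invariant}, namely $d_{ii}\upLn := -\sum_{j\in\I(i)\setminus\{i\}} \dij\upLn$. With this convention the coefficients $1+\frac{2\tau d_{ii}\upLn}{m_i}$ and $\frac{2\tau\dij\upLn}{m_i}$, $j\neq i$, sum to one. The shift $\frac{\tau}{m_i}\bsfS_i\upLn$ appears with the same value in every term, so it factors out and \eqref{eq:convex} becomes
\begin{equation*}
  m_i\frac{\bsfU_i\upLnp-\bsfU_i^n}{\tau}
  = \bsfS_i\upLn + \sum_{j\in\I(i)\setminus\{i\}} 2\dij\upLn\big(\overline{\bsfU}_{ij}^n-\bsfU_i^n\big).
\end{equation*}
It then suffices to identify this expression with $\sum_{j\in\I(i)}\mathbf{F}_{ij}\upLn$.

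Fix $j\neq i$ and insert \eqref{eq:bar_states}. This gives
\begin{equation*}
  2\dij\upLn\big(\overline{\bsfU}_{ij}^n-\bsfU_i^n\big)
  = \dij\upLn\big(\bsfU_j^{\ast,i}+\bsfU_i^{\ast,j}-2\bsfU_i^n\big) - \big(\bm f(\bsfU_j^{\ast,i})-\bm f(\bsfU_i^{\ast,j})\big)\cij .
\end{equation*}
Add $\bsfS_{ij}\upLn$ to this. The term $-2\dij\upLn(\bsfU_i^{\ast,j}-\bsfU_i^n)$ turns $\dij\upLn(\bsfU_j^{\ast,i}+\bsfU_i^{\ast,j}-2\bsfU_i^n)$ into $\dij\upLn(\bsfU_j^{\ast,i}-\bsfU_i^{\ast,j})$.

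Next split $\bm f=\bm g+(0,p\polI_d,0)\tr$. The pressure difference $(p_j^{\ast,i}-p_i^{\ast,j})\cij$ in the momentum row of $\bsfS_{ij}\upLn$ cancels the pressure part of the flux difference. What remains is the $\tilde c$ gravity term, which is exactly the bracket in $\mathbf F_{ij}\upLn$.

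For the pressureless flux, I will use that by Definition~\ref{def:star_states} the star state keeps the velocity $\bsfV_i$. Hence $\bm g(\bsfU_i^{\ast,j})\cij=(\bsfV_i\SCAL\cij)\bsfU_i^{\ast,j}$, and likewise $\bm g(\bsfU_i^n)\cij=(\bsfV_i\SCAL\cij)\bsfU_i^n$. Therefore
\begin{equation*}
  -2(\bsfV_i^n\SCAL\cij)(\bsfU_i^{\ast,j}-\bsfU_i^n) = -2\bm g(\bsfU_i^{\ast,j})\cij+2\bm g(\bsfU_i^n)\cij .
\end{equation*}
Combined with $-(\bm g(\bsfU_j^{\ast,i})-\bm g(\bsfU_i^{\ast,j}))\cij$, this yields $-(\bm g(\bsfU_j^{\ast,i})+\bm g(\bsfU_i^{\ast,j}))\cij+2\bm g(\bsfU_i^n)\cij$. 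Every part of $\mathbf F_{ij}\upLn$ is then recovered, up to the extra term $2\bm g(\bsfU_i^n)\cij$.

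The main bookkeeping obstacle is the diagonal index $j=i$ together with these leftover terms. For $j=i$ we have $\max(0,z_i-z_i)=0$, so $\bsfU_i^{\ast,i}=\bsfU_i^n$. Hence $\bsfS_{ii}\upLn=0$ (both the velocity term and the pressure bracket vanish), while $\mathbf F_{ii}\upLn=-2\bm g(\bsfU_i^n)\mathbf c_{ii}$. Summing over $j\in\I(i)$, the leftover terms assemble into $2\bm g(\bsfU_i^n)\sum_{j\in\I(i)}\cij$ once the diagonal term is included. This sum vanishes by the partition-of-unity identity \eqref{eq:sum_properties}. It remains to check carefully that the signs of the diagonal contributions line up; after that the two forms coincide.
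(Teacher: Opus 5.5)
Your proposal is correct and is exactly the ``straightforward calculation'' the paper invokes without writing it out. Under the implicit but standard convention $d_{ii}\upLn=-\sum_{j\in\I(i)\setminus\{i\}}\dij\upLn$, each $j\neq i$ contributes $\mathbf{F}_{ij}\upLn+2\bm{g}(\bsfU_i^n)\cij$ while $\bsfS_{ii}\upLn=\bzero$, and the sign check you leave open does go through, since $\mathbf{F}_{ii}\upLn=-2\bm{g}(\bsfU_i^n)\mathbf{c}_{ii}=2\bm{g}(\bsfU_i^n)\sum_{j\in\I(i)\setminus\{i\}}\cij$ by \eqref{eq:sum_properties}.
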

The main result of the section is the following proposition summarizing the
structure preserving properties of the low-order update~\eqref{eq:convex}.
\begin{proposition}
  \label{prop:low_order_method}
  For the low-order update $\bsfU\upn \mapsto \bsfU\upLnp$ given by
  \eqref{eq:convex} the following properties hold true provided that a CFL
  condition holds true,
  \begin{align}
    \label{eq:cfl_condition}
    \tau_n \leq \min\limits_{i\in\nu}\frac{m_i}{2 |d_{ii}^{L,n}|}:
  \end{align}
  \begin{enumerate}[label= (\roman*)]
    \item
      It is first-order consistent for smooth flows.
    \item
      It is well-balanced.
    \item
      Up to boundary fluxes, it conserves density $\rho$ and potential
      temperature $\Theta = \varrho\vartheta$. The momentum $\bbm$, on the
      other hand, is only conserved if $g=0$.
    \item It maintains admissibility, $\bsfU_i\upLnp\in\calA$.
    \item It is invariant-domain preserving in the sense that
      \begin{align*}
        \bsfU_i\upLnp \,\in\,
        \conv\limits_{j\in\I(i)}\big(\overline{\bsfU}_{ij}^n\big) \,+\,
        \frac{\tau}{m_i}\bsfS_i\upLn.
      \end{align*}
  \end{enumerate}
\end{proposition}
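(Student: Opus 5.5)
The plan is to read everything off the two equivalent forms of the update: the convex form \eqref{eq:convex} and the flux form of Lemma~\ref{lemma:convex_combination}. I use the usual convention $d_{ii}\upLn:=-\sum_{j\in\I(i)\setminus\{i\}}\dij\upLn\le 0$. With this convention the weights in \eqref{eq:convex} are $1+2\tau d_{ii}\upLn/m_i$ and $2\tau\dij\upLn/m_i$, and they sum to one. Under the CFL condition \eqref{eq:cfl_condition} all of them are nonnegative. So \eqref{eq:convex} is a genuine convex combination of the states $\bsfU_i^n+\frac{\tau}{m_i}\bsfS_i\upLn$ and $\overline{\bsfU}_{ij}^n+\frac{\tau}{m_i}\bsfS_i\upLn$. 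Since the same shift is added to every term, and $\bsfU_i^n=\overline{\bsfU}_{ii}^n$ (because $\bsfU_i^{\ast,i}=\bsfU_i$ and $\mathbf{c}_{ii}$ gives no net flux), this gives item (v) immediately.

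\textbf{Items (i) and (iii).} For (i) I use the flux form \eqref{eq:low_order_scheme} and invoke Proposition~\ref{prop:consistency}.
\begin{itemize}
\item Part (i) of that proposition handles the pressure--gravity bracket.
\item Part (ii) handles the pressureless flux $\mathbf{g}$.
\item The viscous term $\dij\upLn(\bsfU_j^{\ast,i}-\bsfU_i^{\ast,j})$ remains. Here $\dij\upLn=\mathcal{O}(\|\cij\|)=\mathcal{O}(h^{d-1})$ and $\bsfU_j^{\ast,i}-\bsfU_i^{\ast,j}=\mathcal{O}(h)$, since both star states differ from the nodal values by $\mathcal{O}(h)$. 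Hence the sum over the stencil is $m_i\,\mathcal{O}(1)\cdot\mathcal{O}(h)$ after normalisation, which gives first order.
\end{itemize}
For (iii) I sum $\sum_i\sum_j\mathbf{F}_{ij}\upLn$ for the density and $\Theta$ components. These components involve only $\mathbf{g}$ and the viscous term. Using $\dij\upLn=d_{ji}\upLn$ and $\cij=-\mathbf{c}_{ji}$ up to the boundary contributions \eqref{eq:sum_properties}, the terms $(\mathbf{g}(\bsfU_j^{\ast,i})+\mathbf{g}(\bsfU_i^{\ast,j}))\cij$ and $\dij\upLn(\bsfU_j^{\ast,i}-\bsfU_i^{\ast,j})$ are antisymmetric in $(i,j)$ and cancel pairwise. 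In the momentum component, the extra bracket $\tilde c\rhoi\thetai(\cdots)\cij$ carries the prefactor $\rhoi\thetai$, which is not symmetric in $(i,j)$, so it does not cancel. By the computation in the proof of Proposition~\ref{prop:consistency}, its non-cancelling part is the discrete gravity source $\propto g$, which is why momentum is conserved only when $g=0$.

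\textbf{Item (ii), well-balancing.} At a discrete equilibrium, Definition~\ref{def:discrete_equilibrium} gives $\vartheta_i=\vartheta_j$ and $\rhostarij=\rhostarji$. Hence the density and $\Theta$ components of $\bsfU_i^{\ast,j}$ and $\bsfU_j^{\ast,i}$ coincide, and the pressure bracket $(\rhostarji\thetaj)^{\gamma-1}-(\rhostarij\thetai)^{\gamma-1}$ vanishes. For a rest state ($\bsfV=0$) the flux $\mathbf{g}$ and the viscous term then vanish identically, so $\bsfU\upLnp=\bsfU^n$. With a nonzero horizontal velocity one additionally needs $\bsfV_i=\bsfV_j$ and must show that $\sum_j\rhostarij\,\bsfV\SCAL\cij$ vanishes. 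I would try the skew-symmetry of $\cij$ in the horizontal directions together with $x_1,\dots,x_{d-1}$-independence on a tensor-structured stencil; this is the part I am least sure of.

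\textbf{Item (iv), the main obstacle.} Each $\bsfU_i^{\ast,j}$ is admissible: $\rhostarij\ge\rhoi>0$, and its potential temperature equals $\thetai>0$. Hence, by \cite[Thm.~4.1]{guermond2016invariant}, every bar state $\overline{\bsfU}_{ij}^n$ is a Riemann average and lies in $\calA$. The difficulty is the shift $\frac{\tau}{m_i}\bsfS_i\upLn$. Its density component is $-2\sum_j(\dij\upLn+\bsfV_i\SCAL\cij)(\rhostarij-\rhoi)$, which has no sign a priori, and its $\Theta$ component is exactly $\thetai$ times its density component. To handle it, I would first prove the bound $\dij\upLn\ge|\bsfV_i\SCAL\cij|$, which holds because $\hat\lambda_{\max}$ dominates the advective speed; this makes every coefficient $\dij\upLn+\bsfV_i\SCAL\cij$ nonnegative. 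The shifted density is then $\rhoi-\frac{2\tau}{m_i}\sum_j(\cdots)(\rhostarij-\rhoi)$. Since $\rhostarij-\rhoi=\mathcal{O}(h)\rhoi$, the CFL condition \eqref{eq:cfl_condition} bounds this correction by a fraction of $\rhoi$ strictly below one, so the shifted density stays positive; if necessary I would tighten the CFL constant to guarantee this. I would then check that the density of the convex combination is positive. Since the $\Theta$ component of the shift is proportional to its density component with the factor $\thetai$, the shifted potential temperature is a density-weighted average of positive $\theta$ values, so $\theta>0$ and $\bsfU_i\upLnp\in\calA$.
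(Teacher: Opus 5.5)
Items (i), (ii) for $\bsfV=0$, (iii) for $\varrho$ and $\Theta$, and (v) follow the paper's route: you read everything off \eqref{eq:convex} and \eqref{eq:low_order_scheme}. Item (iv), however, has a genuine gap.

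You are right that the shift is \emph{subtracted}. Since $\rhostarij\ge\rhoi$, the quantity $\varrho(\bsfS_i\upLn)=-2\sum_j(\dij\upLn+\bsfV_i\SCAL\cij)(\rhostarij-\rhoi)$ is $\le 0$ whenever the coefficients are nonnegative, for example at $\bsfV_i=0$. The paper's proof asserts $\varrho(\bsfS_i\upLn)>0$, which has the sign reversed, so its ``direct consequence'' shortcut is not available either.

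Your remedy does not work under \eqref{eq:cfl_condition}. The ratio $\rhostarij/\rhoi=\big(1+g\max(0,z_i-z_j)/(\tilde c\,\thetabar\,\thetai^{\gamma-1}\rhoi^{\gamma-1})\big)^{1/(\gamma-1)}$ is $1+\mathcal{O}(h)$ only with a constant of size $\rhoi^{1-\gamma}$, which is unbounded near vacuum. Hence no fixed reduction of the CFL constant controls the correction. Concretely, take 1D at rest with $\vartheta$ constant, $\varrho_{i-1}=\varrho_i=\varrho_{i+1}=\epsilon$ and $z_i-z_{i-1}=2(z_{i+1}-z_i)$. For $\tau=\kappa m_i/|d_{ii}^{L,n}|$, the flux form gives
\[
\varrho_i^{L,n+1}=\epsilon-\frac{\tau}{m_i}\Big(d_{i,i-1}^{L,n}\big(\varrho_i^{\ast,i-1}-\epsilon\big)-d_{i,i+1}^{L,n}\big(\varrho_{i+1}^{\ast,i}-\epsilon\big)\Big)\le\epsilon-\frac{\kappa}{2}\big(\varrho_i^{\ast,i-1}-\varrho_{i+1}^{\ast,i}\big),
\]
using $d_{i,i-1}^{L,n}\ge d_{i,i+1}^{L,n}$, as holds for exact wave speeds. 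As $\epsilon\to0$ the two star densities tend to positive limits with ratio $2^{1/(\gamma-1)}>1$, so $\varrho_i^{L,n+1}<0$ for small $\epsilon$.

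What (iv) actually needs is an additional, state-dependent step restriction. Writing $w_i=1+2\tau d_{ii}^{L,n}/m_i$ and $w_j=2\tau\dij\upLn/m_i$ for the weights in \eqref{eq:convex}, a sufficient condition is $\frac{\tau}{m_i}|\varrho(\bsfS_i\upLn)|<w_i\rhoi$. Under it, the density of \eqref{eq:convex} is $\sum_j w_j\varrho(\overline{\bsfU}_{ij}^n)+\frac{\tau}{m_i}\varrho(\bsfS_i\upLn)\ge w_i\rhoi-\frac{\tau}{m_i}|\varrho(\bsfS_i\upLn)|>0$. Three further points on how you finish:
\begin{enumerate}
\item You must argue on the whole combination. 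The shifted bar states need not be admissible individually, so positivity of the shifted $\bsfU_i^n$ alone is not enough.
\item For $\Theta$ the shift puts a \emph{negative} weight on $\thetai$, so the result is not a density-weighted average. Positivity follows instead from $\thetai\big(w_i\rhoi+\frac{\tau}{m_i}\varrho(\bsfS_i\upLn)\big)>0$ under the same restriction.
\item The bound $\dij\upLn\ge|\bsfV_i\SCAL\cij|$ is not implied by $\hat\lambda_{\max}$ bounding the extreme wave speeds: for strongly colliding states the outer shock speeds are far below $|\bsfV_i|$. You do not need it if you bound with absolute values.
\end{enumerate}

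Three smaller points. In (iii), for $g=0$ the bracket becomes $\tilde c\,\Theta_i(\Theta_j^{\gamma-1}-\Theta_i^{\gamma-1})\cij$. Its double sum equals $\frac{\tilde c}{2}\sum_{i,j}(\Theta_i\Theta_j^{\gamma-1}-\Theta_j\Theta_i^{\gamma-1})\cij$, which in general vanishes only for $\gamma=2$. So the non-cancelling part is not just the $g$-term; the paper's symmetry claim has the same defect. In (i), $\mathcal{O}(h^{d-1})\cdot\mathcal{O}(h)$ summed over the stencil is $m_i\,\mathcal{O}(1)$, not $m_i\,\mathcal{O}(h)$. First order requires the cancellation of the leading $\mathcal{O}(h)$ differences between opposite neighbours. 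In (ii), your $\bsfV=0$ argument is exactly the paper's, which treats only rest states. Your doubt about horizontal flow is warranted: for $\bsfV_i\neq\bsfV_j$ the viscous term $\dij\upLn\rhostarij(\bsfV_j-\bsfV_i)$ does not vanish.
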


\begin{proof}
  Since~\eqref{eq:convex} and~\eqref{eq:low_order_scheme} are equivalent by
  virtue of Lemma~\ref{lemma:convex_combination}, we prove all the results
  for the latter formulation.

  (i) The consistency is a direct consequence of
  \eqref{eq:low_order_scheme} and Proposition~\ref{prop:consistency}.

  (ii)
  Suppose $\bu_h\upn$ is a rest state. Then $\thetai\upn=\thetaj\upn$ and
  $\rhostarijn = \rhostarjin$ for all $i\in\mathcal{V}$ and all
  $j\in\I(i)$. It follows that $\bsfF_{ij}\upLn = \mathbf{0}$ for each
  $i\in\mathcal{V}$, $j\in\I(i)$. Hence $\bsfU_i\upLnp = \bsfU_i\upn$ for
  all $i\in\mathcal{V}$, and so the scheme is well-balanced.

  (iii)
  In order to show conservation, we need to prove that the right hand side
  of \eqref{eq:low_order_scheme} vanishes when summed up over index
  $i\in\calV$: We have that $\cij = -\mathbf{c}_{ji}$ for all
  $i\in\calV$, $j\in\I(i)$, provided that either $i$, or $j$ is an
  \emph{interior} degree of freedom, \ie $\bx_i$ or $\bx_j$ are not
  collocated on the boundary $\partial\Omega$. Furthermore,
  $\mathbf{g}(\bsfU_j^{n,\ast,i}) + \mathbf{g}(\bsfU_i^{n,\ast,j})$ and
  $\dij\upLn$ are symmetric, and $\bsfU_j^{n,\ast,i} - \bsfU_i^{n,\ast,j}$ is
  skew-symmetric. The remaining contribution acts on the momentum and is
  given by $\tilde{c}(\rhoi\thetai)[(\thetaj\rhoj^\ast)^{\gamma -1} -
  (\thetai\rhoi^\ast)^{\gamma -1}]$. In case of vanishing gravity, $g=0$,
  it is symmetric and the total momentum is conserved as well.

  (iv)
  Admissibility is a direct consequence of \eqref{eq:convex}, the fact that
  $\overline{\bsfU}_{ij}^n\in\calA$ are admissible by construction
  \eqref{eq:dij} of $\dij\upLn$ under a CFL condition, and the observation
  that
  \begin{align*}
    \varrho(\bsfS_i\upLn) &= \sum\limits_{j\in\I(i)} -2(\dij\upLn +
    \bsfV_i^n\cdot\cij)(\rhostarij - \rhoi)
    \,>\,0\quad\text{and}
    \\
    \varrho\vartheta(\bsfS_i\upLn) &= \sum\limits_{j\in\I(i)} -2(\dij\upLn
    + \bsfV_i^n\cdot\cij)(\rhostarij - \rhoi)\thetai
    \,>\,0.
  \end{align*}
  Thus, $\bsfU_i\upLnp\in\mathcal{A}$ for all $i\in\calV$ by \eqref{eq:convex}.

  (v)
  The convex combination follows directly from \eqref{eq:convex}.
\end{proof}

\begin{remark}[Spatial accuracy]
    When using linear elements, the scheme~\eqref{eq:low_order_scheme} is
    observed to be first-order accurate in space with respect to the mesh
    size. We remind the reader that the importance of the low-order scheme
    is to use it as a safeguard for the high-order method.
\end{remark}

\subsection{Local bounds}
\label{sec:local_bounds}
A crucial ingredient for ensuring fidelity and robustness of the high-order
method is the construction of suitable local bounds. Motivated by the
observation (see: Prop.~\ref{prop:low_order_method}) that the low order
update satisfies
\begin{align*}
  \bsfU_i\upLnp \,\in\,
  \conv\limits_{j\in\I(i)}\big(\overline{\bsfW}_{ij}^n\big),
  \qquad
  \overline{\bsfW}_{ij}^n\,:=\, \overline{\bsfU}_{ij}^n \,+\,
  \frac{\tau}{m_i}\bsfS_i\upLn,
\end{align*}
we construct a local invariant domain $\calB_i^n$ as follows:
\begin{equation}\label{invariant_domain}
  \calB_i^n\eqq\Big\{\bu\in\polR^m \;:\;
    \rhoi^{n,\min}\le\rho\le \rhoi^{n,\max}
    \text{ and }
     \vartheta_i^{n,\min} \le\theta\le \vartheta_i^{n,\max}
  \Big\},
\end{equation}
where
\begin{gather*}
    \rhoi^{n,\min}\eqq
    \min_{j\in\calI(i)}\rho(\overline{\bsfW}_{ij}\upn),\qquad
    \rhoi^{n,\max}\eqq \max_{j\in\calI(i)}\rho(\overline{\bsfW}_{ij}\upn),
    \\
    \vartheta_i^{n,\min}\eqq
    \min_{j\in\calI(i)}\theta(\overline{\bsfW}_{ij}\upn),\qquad
    \vartheta_i^{n,\max}\eqq
    \max_{j\in\calI(i)}\theta(\overline{\bsfW}_{ij}\upn).
\end{gather*}
\begin{lemma}
  It holds true that
  \begin{align*}
    \calB_i^n\,\supset\,
    \conv\limits_{j\in\I(i)}\big(\overline{\bsfW}_{ij}^n\big)
    \,\ni\, \bsfU_i\upLnp.
  \end{align*}
\end{lemma}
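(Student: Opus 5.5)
The plan is to prove the two inclusions separately. The membership $\bsfU_i\upLnp\in\conv_{j\in\I(i)}(\overline{\bsfW}_{ij}^n)$ should come from the convex-combination form \eqref{eq:convex}, as already recorded in Proposition~\ref{prop:low_order_method}(v). The inclusion $\conv_{j}(\overline{\bsfW}_{ij}^n)\subset\calB_i^n$ should follow from the fact that both constraints defining $\calB_i^n$ reduce to linear inequalities in $\bu$ once $\rho>0$.

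\textbf{Step 1 (membership of the low-order update).} I will rewrite \eqref{eq:convex} so that every term has the form (coefficient)$\times\overline{\bsfW}_{ij}^n$. For the diagonal term I will check that $\overline{\bsfU}_{ii}^n=\bsfU_i^n$. This holds because $\max(0,z_i-z_i)=0$ gives $\bsfU_i^{\ast,i}=\bsfU_i$, so the flux difference in \eqref{eq:bar_states} vanishes. Hence $\bsfU_i^n+\frac{\tau}{m_i}\bsfS_i\upLn=\overline{\bsfW}_{ii}^n$. Using the convention $d_{ii}\upLn=-\sum_{j\ne i}\dij\upLn\le 0$, the coefficients $1+2\tau d_{ii}\upLn/m_i$ and $2\tau\dij\upLn/m_i$ sum to one. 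All of them are nonnegative: the off-diagonal ones trivially, and the diagonal one by the CFL condition \eqref{eq:cfl_condition}. Therefore $\bsfU_i\upLnp$ is a convex combination of the $\overline{\bsfW}_{ij}^n$.

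\textbf{Step 2 (density bounds).} The map $\bu\mapsto\rho(\bu)$ is linear. So for any convex combination $\bu=\sum_j\lambda_j\overline{\bsfW}_{ij}^n$ we get $\rho(\bu)=\sum_j\lambda_j\rho(\overline{\bsfW}_{ij}^n)$, which lies between $\rhoi^{n,\min}$ and $\rhoi^{n,\max}$. In particular $\rho(\bu)\ge\rhoi^{n,\min}>0$, because each $\overline{\bsfW}_{ij}^n$ is admissible. Admissibility comes from the argument in Proposition~\ref{prop:low_order_method}(iv): $\overline{\bsfU}_{ij}^n\in\calA$ is a Riemann average, and the density and $\rho\theta$ components of the shift $\bsfS_i\upLn$ are positive.

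\textbf{Step 3 (potential temperature bounds).} This is the only nonlinear constraint and the one needing care, since $\theta=\Theta/\rho$ is not linear. It is, however, quasi-linear on $\{\rho>0\}$. On that set, $\theta(\bu)\ge\vartheta_i^{n,\min}$ is equivalent to the linear inequality $\Theta-\vartheta_i^{n,\min}\rho\ge0$, and $\theta(\bu)\le\vartheta_i^{n,\max}$ is equivalent to $\vartheta_i^{n,\max}\rho-\Theta\ge0$. Each vertex $\overline{\bsfW}_{ij}^n$ satisfies both inequalities by definition of the bounds. Linear inequalities are preserved under convex combinations, and Step 2 guarantees the combination still has $\rho>0$, so the equivalence can be applied in reverse. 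This gives $\vartheta_i^{n,\min}\le\theta(\bu)\le\vartheta_i^{n,\max}$ for every $\bu$ in the convex hull.

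\textbf{Conclusion.} Steps 2 and 3 give $\conv_j(\overline{\bsfW}_{ij}^n)\subset\calB_i^n$, and Step 1 gives $\bsfU_i\upLnp\in\conv_j(\overline{\bsfW}_{ij}^n)$. I expect the main obstacle to be bookkeeping in Step 1, namely justifying the identification of the diagonal term with $\overline{\bsfW}_{ii}^n$ and the sign of $d_{ii}\upLn$. The convexity arguments in Steps 2 and 3 are routine once positivity of the density is in hand.
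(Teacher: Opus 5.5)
Your route is the paper's. Its proof rewrites $\calB_i^n$ through $\Psi_1,\dots,\Psi_4$, which are exactly your linear constraints, and concludes from their quasi-concavity; the membership $\bsfU_i\upLnp\in\conv_{j}(\overline{\bsfW}_{ij}^n)$ is taken from Proposition~\ref{prop:low_order_method}(v). Your Steps~1 and~3 are fine, and they spell out details the paper leaves implicit: $\overline{\bsfU}_{ii}^n=\bsfU_i^n$, the sign of $d_{ii}\upLn$ together with \eqref{eq:cfl_condition}, and the fact that passing between the $\theta$-bounds and the linear inequalities requires $\rho>0$.

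The step that does not hold as written is your justification of $\varrho_i^{n,\min}>0$ in Step~2. There you repeat the claim from the proof of Proposition~\ref{prop:low_order_method}(iv) that the density component of $\bsfS_i\upLn$ is positive, but the sign is reversed: that component is $\le 0$.

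\begin{itemize}
\item Since $g,\tilde c>0$ and $\max(0,z_i-z_j)\ge 0$, the hydrostatic reconstruction \emph{raises} the density: $\varrho_i^{n,\ast,j}\ge\varrho_i^n$.
\item Moreover $\dij\upLn+\bsfV_i^n\cdot\cij\ge 0$. When $\bsfV_i^n\cdot\cij<0$, the leftmost wave speed of the Riemann problem with left state $\bsfU_i^{n,\ast,j}$ in direction $\cij/\|\cij\|$ is at most $\bsfV_i^n\cdot\cij/\|\cij\|$ minus a sound speed. Hence $\dij\upLn\ge\hat{\lambda}_{\max}\|\cij\|\ge|\bsfV_i^n\cdot\cij|$.
\item So every summand $-2(\dij\upLn+\bsfV_i^n\cdot\cij)(\varrho_i^{n,\ast,j}-\varrho_i^n)$ is $\le 0$, and the affine shift removes density. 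In the shallow-water analogue the reconstructed depth lies below the original one, which is where the favorable sign comes from.
\end{itemize}

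Consequently, $\varrho(\overline{\bsfW}_{ij}^n)=\varrho(\overline{\bsfU}_{ij}^n)+\frac{\tau}{m_i}\varrho(\bsfS_i\upLn)>0$ is not implied by \eqref{eq:cfl_condition}. It must either be assumed or be enforced by an extra time-step restriction, such as $\tau\,|\varrho(\bsfS_i\upLn)|<m_i\min_{j\in\calI(i)}\varrho(\overline{\bsfU}_{ij}^n)$.

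This hypothesis carries real weight. If one vertex had $\rho>0$ and another $\rho<0$, the hull would contain points with $\rho=0$, where $\theta$ is not even defined, and nearby points with unbounded $\theta$. Then your Step~3, and the inclusion itself, would fail. The paper's proof relies on the same fact tacitly, since $\Psi_3,\Psi_4$ encode the $\theta$-bounds only on $\{\rho>0\}$. Once you state positivity of the vertices as a hypothesis (or add the time-step restriction) instead of deriving it from the sign of the shift, your argument is complete.
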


\begin{proof}
  We can describe $\calB_i^n$ equivalently with \eqref{eq:invariant_domain}
  and the following functions:
  \begin{align*}
    \begin{aligned}
      \Psi_1(\bu) &:=  \rho(\bu) - \rho_i^{n, \min},
      &\qquad
      \Psi_2(\bu) &:=  \rho_i^{n, \max}-\rho(\bu),
      \\
      \Psi_3(\bu) &:=  (\rho\theta)(\bu) - \rho(\bu)\vartheta_i^{n, \min},
      &\qquad
      \Psi_4(\bu) &:=  \rho(\bu)\vartheta_i^{n, \max}-(\rho\theta)(\bu).
    \end{aligned}
  \end{align*}
  The property now follows from the observation that all $\Psi_l(\bu)$ are
  \emph{quasi-concave} when viewed as a function of the conserved
  quantities; see, e.\,g., \cite[Lem~7.4]{guermond2019idp}.
\end{proof}
We highlight that the property above regarding the local bounds is stronger than the notion of \textit{positivity} preservation (\ie local vs global bounds). In the next section we construct a high-order update $\bsfU_i^{n+1}$ that
ensures $\bsfU_i^{n+1}\in\calB_i^n$.

\subsection{High-order method}
\label{sec:high_order}
Robust high-order accuracy in space can be achieved by combining a
high-order spatial discretization based on entropy
viscosity~\cite{guermond2011entropy} with a suitable limiter
technique~\cite{guermond2018second} that ensures robustness. This is then
combined with a robust high-order time-stepping
strategy~\cite{guermond2025erk}. For the sake of completeness, we outline
briefly our concrete choice of constructing a high-order update with convex
limiting. We first introduce a high-order flux
\begin{multline}
  \mathbf{F}_{ij}\upHn \eqq -
  \big(\mathbf{g}(\bsfU_j^{n}) + \mathbf{g}(\bsfU_i^{n})\big)\cij +
  \dij\upHn(\bsfU_j^{n,\ast,i} - \bsfU_i^{n,\ast,j})
  \\
  -
  \left[\begin{array}{c}
  0
  \\
  \tilde{c}\,\rhoi^n\thetai^n\,\big((\rhoj^{n}\thetaj^n)^{\gamma -1} -
  (\rhoi^{n}\thetai^n)^{\gamma -1}\big)\cij
  \,+\, \rhoi^{n}\,g\,\big((x_d)_j - (x_d)_i\big)\cij
  \\
  0
  \end{array}\right].
\end{multline}
Here, $\dij\upHn$ is a high-order graph viscosity coefficient that is
constructed with an entropy-viscosity commutator~\cite{guermond2011entropy}:
\begin{align*}
  \dij\upHn = \dij\upLn\frac{\alpha_i^n + \alpha_j^n}{2}
\end{align*}
where $\alpha_i^n\in[0,1]$ is the entropy production
indicator~\cite{guermond2019idp, guermond2018second} defined by
\begin{gather*}
    \alpha_i^n \eqq \frac{|N_i|}{D_i^n+\epsilon D_{\max}},
    \qquad
    N_i^n \eqq \sum\limits_{j\in\I(i)}\big(\mathbf{q}(\bsfU_j^{n}) -
    (\nabla_\bu\eta(\U_i^n))^T\nabla\cdot\bm{f}(\bsfU_j^{n})\big)
    \mathbf{c}_{ij},
    \\
    D_i^n \eqq
    |\sum\limits_{j\in\I(i)}\mathbf{q}(\bsfU_j^{n})\mathbf{c}_{ij}| +
    |\sum\limits_{j\in\I(i)}(\nabla_\bsfU\eta(\U_i^n))^T
    \nabla\cdot\bm{f}(\bsfU_j^{n})\mathbf{c}_{ij}|,
\end{gather*}
and where we used the entropy pair:
\begin{align*}
  \eta(\bu) \eqq \frac{1}{2}\rho\norm{\mathbf{v}}^2 + \frac{1}{\gamma-1}p +
  \rho g x_d,
  \quad\text{and}\quad
  \bq(\bu)\eqq \mathbf{v}(\eta(\bu) + p).
\end{align*}
Now, we introduce a high-order update as follows \cite{guermond2018second,
guermond2019idp}:
\begin{gather}
  \label{eq:limited_high}
  \bsfU_i\upnp \eqq
  \sum_{j\in\calI(i)\setminus\{i\}}\lambda_i(\bsfU_i\upLnp+\ell_{ij}
  \upn\bsfP_{ij}\upn),
  \qquad\text{where}
  \\\notag
  \bsfP_{ij}\upn\eqq \frac{\tau}{m_i
  \lambda_i}\left\{\bsfF_{ij}\upHn-\bsfF_{ij}\upLn + b_{ij}\bsfF_{j}\upHn -
  b_{ji}\bsfF_{i}\upHn\right\},
  \qquad
  \lambda_i\eqq\frac{1}{\text{card}(\calI(i))-1},
\end{gather}
and where the limiter coefficients $\ell_{ij}\in[0,1]$ are chosen such that
\begin{align*}
  \Psi_k(\bsfU_i\upLnp + \ell_j^{i,k}\bsfP_{ij}\upn)\geq 0
  \quad\text{for each } k=1,\ldots,4.
\end{align*}
We have the following result.
\begin{theorem}\label{thm:final_theorem}
  The scheme $\bsfU\upn\mapsto\bsfU\upnp$ given
  by~\eqref{eq:limited_high} supplemented with the limiter coefficients
  $\ell_{ij}\upn$  such that $\Psi_k(\bsfU_i\upLnp +
  \ell_{ij}\upn\bsfP_{ij}\upn)\geq 0$ for all $k=1,\ldots,4$, is
  conservative (up to source contributions), well-balanced with respect to equilibrium states, and
  invariant-domain preserving under the CFL condition, i.\,e.,
  \begin{align*}
    \bsfU_i\upnp\,\in\, \calB_i^n.
  \end{align*}
\end{theorem}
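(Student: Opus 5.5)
The plan is to treat the three claimed properties separately, using throughout the fact that \eqref{eq:limited_high} is a convex combination with weights $\lambda_i$ summing to one over $j\in\calI(i)\setminus\{i\}$. Expanding it gives
\begin{align*}
  \bsfU_i\upnp = \bsfU_i\upLnp + \sum_{j\in\calI(i)\setminus\{i\}} \lambda_i\ell_{ij}\upn\bsfP_{ij}\upn
  = \bsfU_i\upLnp + \frac{\tau}{m_i}\sum_{j\in\calI(i)\setminus\{i\}}\ell_{ij}\upn\big\{\bsfF_{ij}\upHn-\bsfF_{ij}\upLn + b_{ij}\bsfF_{j}\upHn - b_{ji}\bsfF_{i}\upHn\big\}.
\end{align*}
So the high-order scheme is the low-order update plus limited antidiffusive corrections.

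\emph{Invariant-domain preservation.} Under the CFL condition \eqref{eq:cfl_condition}, Proposition~\ref{prop:low_order_method} and the preceding lemma give $\bsfU_i\upLnp\in\calB_i^n$. By construction of the limiter, each state $\bsfU_i\upLnp+\ell_{ij}\upn\bsfP_{ij}\upn$ satisfies $\Psi_k\ge0$ for $k=1,\dots,4$, so it lies in the closure of $\calB_i^n$. Such an $\ell_{ij}\upn$ always exists, since $\ell_{ij}\upn=0$ is admissible. The set $\{\Psi_k\ge 0,\ k=1,\dots,4\}$ is convex, because each $\Psi_k$ is quasi-concave (indeed affine) in the conserved variables. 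Hence the convex combination $\bsfU_i\upnp$ lies in $\calB_i^n$. This step is essentially immediate.

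\emph{Conservation.} Sum $m_i\bsfU_i\upnp$ over $i$. The low-order part is handled by Proposition~\ref{prop:low_order_method}(iii). For the correction, I would use the skew-symmetry $\ell_{ij}\upn=\ell_{ji}\upn$, which is the standard symmetrization of convex limiting and which I would state explicitly as part of the limiter. With it, the terms $\bsfF_{ij}\upHn-\bsfF_{ij}\upLn$ are skew-symmetric in $(i,j)$ up to the source contribution in the momentum, because:
\begin{itemize}
  \item the $\mathbf{g}$-terms are symmetric and $\cij=-\mathbf{c}_{ji}$ at interior pairs;
  \item $\dij\upHn$ and $\dij\upLn$ are symmetric;
  \item $\bsfU_j^{\ast,i}-\bsfU_i^{\ast,j}$ is skew-symmetric.
\end{itemize}
The mass-matrix correction terms $b_{ij}\bsfF_j\upHn-b_{ji}\bsfF_i\upHn$ are skew-symmetric by definition. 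So only momentum source terms and boundary contributions remain, which is the claim ``conservative up to source contributions''.

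\emph{Well-balancing.} This is the main obstacle. Let $\bu_h\upn$ be at equilibrium in the sense of Definition~\ref{def:discrete_equilibrium}. Proposition~\ref{prop:low_order_method}(ii) gives $\bsfU_i\upLnp=\bsfU_i\upn$, so it suffices that every $\ell_{ij}\upn\bsfP_{ij}\upn$ vanishes. The viscous part of $\bsfF_{ij}\upHn$ vanishes since $\bsfU_j^{\ast,i}=\bsfU_i^{\ast,j}$ at equilibrium. The remaining high-order terms (centered $\mathbf{g}$-flux, centered pressure and gravity) need not cancel exactly at equilibrium, since they are not written with star states.

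I would first try to show that the limiter forces $\ell_{ij}\upn=0$ at equilibrium. At a rest state the shift $\bsfS_i\upLn$ and the bar states collapse so that the bounds become degenerate. Specifically, $\theta(\overline{\bsfW}_{ij}\upn)=\vartheta_i$ for all $j$, so $\vartheta_i^{\min}=\vartheta_i^{\max}$. I would then check that the density bounds likewise pin $\rho$ to $\varrho_i$, using $\bsfS_{ij}$ and $\overline{\bsfU}_{ij}=\bsfU_i^{\ast,j}$ at rest. Then $\calB_i^n$ reduces to a degenerate set, and any nonzero $\bsfP_{ij}$ changing $\rho$ or $\theta$ is cut off to $\ell_{ij}\upn=0$. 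A momentum-only perturbation would have to be handled separately.

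If this degeneracy argument fails, the fallback is to set $\ell_{ij}\upn=0$ whenever $\bsfU_i^{\ast,j}=\bsfU_j^{\ast,i}$ and $\vartheta_i=\vartheta_j$. Alternatively, one could write the high-order flux with star states as in \eqref{eq:low_order_scheme} so that $\bsfF_{ij}\upHn=\bsfF_{ij}\upLn=\mathbf 0$, which gives $\bsfP_{ij}\upn=\mathbf 0$ up to the $b_{ij}$ terms. Those also vanish, since $\bsfF_j\upHn=\sum_k\bsfF_{jk}\upHn=\mathbf 0$. In either case $\bsfU_i\upnp=\bsfU_i\upn$.
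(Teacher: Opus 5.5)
Your invariant-domain and conservation arguments match the paper's: the sets $\{\Psi_k\ge 0\}$ are affine, hence convex, and $\ell_{ij}\upn$ is symmetric while $m_i\bsfP_{ij}\upn$ is skew-symmetric. The gap is in well-balancing, and it comes from a false premise. You claim that the centered pressure and gravity terms of $\bsfF_{ij}\upHn$ need not cancel at equilibrium. In fact they cancel exactly, pair by pair; that is why the term $\rhoi\, g\,(z_j-z_i)\cij$ appears in the high-order flux.

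At a rest state $\thetai=\thetaj=:\theta$. Take $z_i>z_j$, so that $\rhostarji=\rhoj$ and $\thetabar\,\thetai^{\gamma-1}=\theta^\gamma$. Then the condition $\rhostarij=\rhostarji$ reads $\rhoi^{\gamma-1}+\frac{g}{\tilde c\theta^\gamma}z_i=\rhoj^{\gamma-1}+\frac{g}{\tilde c\theta^\gamma}z_j$, and the case $z_i\le z_j$ is symmetric. Multiplying by $\tilde c\,\theta^\gamma\rhoi$ gives
\[
  \tilde c\,\rhoi\thetai\big((\rhoj\thetaj)^{\gamma-1}-(\rhoi\thetai)^{\gamma-1}\big)+\rhoi\, g\,(z_j-z_i)=0 .
\]
Moreover $\mathbf g(\bsfU_i)=\mathbf g(\bsfU_j)=\bzero$ because the velocity vanishes, and $\bsfU_j^{\ast,i}=\bsfU_i^{\ast,j}$ kills the viscous term. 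Hence $\bsfF_{ij}\upHn=\bzero$ for every $j$, so $\bsfF_i\upHn=\bzero$ and $\bsfP_{ij}\upn=\bzero$ for any choice of $\ell_{ij}\upn$. Together with $\bsfU_i\upLnp=\bsfU_i\upn$ this gives $\bsfU_i\upnp=\bsfU_i\upn$. This is the paper's argument, and it is the idea your proposal is missing.

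Your substitutes do not close the gap. The degenerate-bounds route only half works. At rest every $\overline{\bsfW}_{ij}\upn$ does have potential temperature $\thetai$, so the $\theta$-bounds collapse. However, $\rho(\overline{\bsfW}_{ij}\upn)=\rhostarij+\frac{\tau}{m_i}\rho(\bsfS_i\upLn)$, with $\rhostarij>\rhoi$ for neighbors below $i$ and $\rhostarij=\rhoi$ otherwise. So the density interval $[\rhoi^{n,\min},\rhoi^{n,\max}]$ is non-degenerate, and $\calB_i^n$ places no constraint on momentum at all; nothing forces $\ell_{ij}\upn=0$.

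Both fallbacks change the scheme rather than prove the theorem for it. The statement concerns the given high-order flux together with any limiter satisfying the $\Psi_k$ constraints, not an ad hoc rule setting $\ell_{ij}\upn=0$ at equilibrium. Replacing the centered pressure--gravity term by the star-state version would, by Proposition~\ref{prop:consistency}(i), make that term only first-order consistent, which defeats the purpose of the high-order flux.
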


\begin{proof}
  (i) Suppose that the gravitational acceleration is zero. Since
  $\ell_{ij}\upn$ is symmetric and $m_i\bsfP_{ij}\upn$ is skew-symmetric, it
  follows that
  \begin{align*}
    \sum\limits_{j\in\I(i)}m_i\bsfU_i^{n+1} =
    \sum\limits_{j\in\I(i)}m_i\bsfU_i^{n}
  \end{align*}
  and thus the scheme is conservative without source terms.

  (ii) In order to establish well-balancing, we show that the limited
  update reduces to the low order update $\bsfU_i\upLnp$ in equilibrium.
  Suppose the discrete state $\bsfU_i\upn$ is at equilibrium for
  $i\in\mathcal{V}$. By Proposition \eqref{prop:low_order_method}, the low
  order-flux is zero. Furthermore, the high order flux
  $\mathbf{F}_{ij}\upHn =0$ since
  $\tilde{c}\thetai^n\,(\rhoj^{n}\thetaj^n)^{\gamma -1} \,+\, g\,z_j =
  \tilde{c}\thetai^n(\rhoi^{n}\thetai^n)^{\gamma -1} + g\,z_i $  at
  equilibrium. Since $\mathbf{F}_i\upHn
  =\sum\limits_{j\in\mathcal{I}(i)}\mathbf{F}_{ij}\upHn $ and
  $\mathbf{F}_{ij}\upHn = -\mathbf{F}_{ji}\upHn $, we have that
  $\bsfP_{ij}\upn = 0$. Therefore, the scheme is well-balanced.

  (iii) To show invariant domain preservation suppose the time step
  satisfies the CFL condition \eqref{eq:cfl_condition}. Then,
  $\bsfU_i\upLnp$ is a convex combination and the local bounds in
  \eqref{invariant_domain} hold. Then, using the construction of the
  limiter we have that:
  \begin{align*}
    \Psi_k(\bsfU_i\upnp) \,=\,
    \Psi_k\Big(\sum_{j\in\calI(i)}\lambda_i\big(\bsfU_i\upLnp+\ell_{ij}
    \upn\bsfP_{ij}\upn\big)\Big)\,\ge\,0,
  \end{align*}
  because $\Psi_k\Big(\bsfU_i\upLnp+\ell_{ij}\upn\bsfP_{ij}\upnp\Big)\geq
  0$ for all $j\in\I(i)$ and $\Psi_k$ is a quasi-concave function.
\end{proof}

\section{Numerical Illustrations}\label{sec:results}

This section illustrates the method in the following ways: (i) verification using convergence tests, (ii) well-balancing results, and (iii) the validity of the method through various benchmark problems found in the literature.
The numerical tests are performed using \verb|ryujin| \cite{ryujin-2021-1, ryujin-2021-3}, which is a high performance code built upon the \verb|deal.II| finite element library \cite{dealII95}. All tests use continuous, linear finite elements for the spatial approximation and a third order explicit Runge-Kutta method technique presented in time~\citep[Tab.~1(b)]{guermond2025erk}. The time step size at each stage is computed with 
\begin{equation*}
    \tau_n :=\text{CFL} \max\limits_{i\in\nu}\frac{m_i}{2 |d_{ii}^{L,n}|}
\end{equation*}
where $\text{CFL}$ is a user-defined parameter. For all 1D verification tests, we set CFL$=0.5$, and for larger benchmarks we set CFL = $1.0$. 

\subsection{Verification}\label{sec:verification}
To determine convergence properties of the method, we use the following consolidated error indicator for the tests:
\begin{equation}
    \delta^q(t) = \sum\limits_{k=1}^m\frac{\norm{\bm{u}_{k,h}(t) - \bm{u}_k(t)}_q}{\norm{\bm{u}_k(t)}_q},
\end{equation}
where $\bm{u}_k(t)$ is the exact solution and $\bm{u}_{k,h}(t)$ is the spatial approximation for each component $k$ of the system.

\subsubsection{Smooth Wave Solution}
We first test the numerical method with a 1D smooth solution for the model~\eqref{model_problem} without sources, \ie $g=0$. We do so by adapting the smooth traveling wave introduced in ~\cite[Sec.~5.2]{guermond2018second}.
Since the application of this benchmark is new to the atmospheric model, we provide the details for the solution in ~\citep{guermond2018second}.

Let $v(x,t) = v_0$, $p(x,t) = p_c$ and
the density profile is given by:
\begin{equation*}
\rho(x,t) =
    \begin{cases}
        \rho_0 +  2^6(x_1 - x_0)^{-6}(x - v_0t - x_0)^3(x_1 - x + v_0t)^3 & \text{if } x_0 \leq x - v_0t \leq x_1\\
        \rho_0 & \text{otherwise},
    \end{cases}
\end{equation*}
where $x_0 = 0.1$, $x_1 = 0.3$, $c_v = 719$, and $\gamma = \frac{7}{5}$. This gives us the following relation for potential temperature:
\begin{equation*}
    \theta(x,t) = (\frac{p_c}{p_0})^\frac{1}{\gamma}\frac{p_0}{c_v(\gamma - 1)\rho(x,t)}.
\end{equation*}
The computational domain is set to be $D = [0, \SI{1}{m}]$ with Dirichlet
boundary conditions, and $v_0= p_c = 1$. The reference pressure $p_0 =1$
Pa. The final time is set to $t_f=0.1$ s. The consolidated errors and
convergence rates for the high order method are listed in Table
\ref{tab:smooth_wave}. We observe superconvergence with rates close to
third-order in each norm for this test problem consistent with the
literature, cf.~\cite{guermond2013correction}.
\begin{table}
\begin{center}
\begin{tabular}{rrlrlrl}
  \toprule
   $\bm{I}$ &   $\bm{\delta^1(T)}$ &      &   $\bm{\delta^2(T)}$ &      & $\bm{\delta^\infty(T)}$ & \\[0.25em]
   101 &     0.00139335  &      &     0.00302482  &      &          0.00986561  &      \\
   201 &     0.000172228 & 3.02 &     0.000416578 & 2.86 &          0.00157084  & 2.65 \\
   401 &     1.46985e-05 & 3.55 &     4.53502e-05 & 3.2  &          0.000224718 & 2.81 \\
   801 &     1.51174e-06 & 3.28 &     6.51501e-06 & 2.8  &          3.86661e-05 & 2.54 \\
  1601 &     1.68806e-07 & 3.16 &     9.51502e-07 & 2.78 &          7.64304e-06 & 2.34 \\
  3201 &     1.85924e-08 & 3.18 &     1.38329e-07 & 2.78 &          1.38204e-06 & 2.47 \\
  \bottomrule
\end{tabular}
\end{center}
\caption{$\delta^1(T)$, $\delta^2(T)$, and $\delta^\infty(T)$ errors and convergence rates for the one-dimensional smooth wave problem at the final time $t_f=0.1$ s with $g=0$ and CFL $0.5$.} \label{tab:smooth_wave}
\end{table}

\subsubsection{Exact Solution with Gravitational Source}\label{gravity_test}
We now verify the convergence of the method when considering the gravitational source. 
We derive the exact solution as follows.
Let $\rho(x,t) = \rho_0$, 
$v(x,t) = v_0$, and pressure be defined by the equation of state $p(x,t) = C_{eos}(\rho\theta)^\gamma$. Our set of conservation equations reduce to
\begin{align*}
    \partial_xp &= -\rho g \\
    \partial_t\theta + v_0\partial_x\theta &= 0.
\end{align*}
Then, potential temperature equation is found to be
\begin{equation*}
    \theta(x,t) = \left[-\frac{g}{C_{eos}\rho_0^{\gamma-1}}(x-v_0t)\right]^\frac{1}{\gamma}.
\end{equation*}
The computational domain is set to be $D = [0, \SI{5}{m}]$ with Dirichlet boundary conditions, and $v_0= \rho_0 = 1$. In 2D we set $\bm{v} = (0, v_0)^T$ and 
\begin{equation*}
    \theta(\bm{x},t) = \left[-\frac{g}{C_{eos}\rho_0^{\gamma-1}}(y-v_0t)\right]^\frac{1}{\gamma}.
\end{equation*}
Let $g=1$, and the final time $t_f=5$ s. We compare the consolidated errors and rates for first-order in two-dimensions in Table \ref{tab:low-order-results}. Note that we still observe first-order convergence for a non-uniform mesh distorted by 10\%: a weaker assumption on the mesh than in Proposition \ref{prop:consistency}, which assumes some regularity on the mesh in the direction of gravity. The consolidated errors and convergence rates for the high order method in 1D are listed in Table \ref{tab:exact_source}. 
We observe optimal convergence. 
\begin{table}[t]
\centering
  \begin{subtable}[t]{0.48\linewidth}
    \centering
    \resizebox{\linewidth}{!}{%
    \begin{tabular}{rrlrl}
    \toprule
    $\bm{I}$ & $\bm{\delta^1(T)}$ & & $\bm{\delta^\infty(T)}$ & \\
    121   & 0.00789868  &      & 0.0255655  &      \\
    441   & 0.00412061  & 0.94 & 0.0135637  & 0.91 \\
    1681  & 0.00221506  & 0.90 & 0.00826712 & 0.71 \\
    6561  & 0.00116531  & 0.93 & 0.00483754 & 0.77 \\
    25921 & 0.000597845 & 0.96 & 0.00274854 & 0.82 \\
    \bottomrule
    \end{tabular}}%
    \caption{Uniform mesh}
  \end{subtable}
  \hfill
  \begin{subtable}[t]{0.48\linewidth}
    \centering
    \resizebox{\linewidth}{!}{%
    \begin{tabular}{rrlrl}
    \toprule
    $\bm{I}$ & $\bm{\delta^1(T)}$ & & $\bm{\delta^\infty(T)}$ & \\
       121 &     0.00824557  &      & 0.066454   &      \\
       441 &     0.00468741  & 0.81 & 0.0398796  & 0.74 \\
      1681 &     0.00236926  & 0.98 & 0.0266168  & 0.58 \\
      6561 &     0.0012675   & 0.9  & 0.0147337  & 0.85 \\
     25921 &     0.000653405 & 0.96 & 0.00759621 & 0.96 \\
    \bottomrule
    \end{tabular}}%
    \caption{10\% mesh distortion}
  \end{subtable}
  \vspace{-1em}
  \caption{$\delta^1(T)$ and $\delta^\infty(T)$ errors and convergence
    rates  for the low-order method in 2D for the exact solution with
    nonzero gravity. We observe first-order convergence for both the (a)
    uniform mesh and (b) 10\% distorted mesh at a final time of $t_f = 1$ s
    and CFL=0.5.}
  \label{tab:low-order-results}
\end{table}
\begin{table}
\begin{center}
\begin{tabular}{rrlrlrl}
  \toprule
   $\bm{I}$ &   $\bm{\delta^1(T)}$ &      &   $\bm{\delta^2(T)}$ &      &   $\bm{\delta^\infty(T)}$ &      \\[0.25em]
   101 &     0.00070729  &      &     0.000773312 &      &          0.00164977  &      \\
   201 &     0.000178313 & 1.99 &     0.000194809 & 1.99 &          0.000421821 & 1.97 \\
   401 &     4.47494e-05 & 1.99 &     4.88736e-05 & 1.99 &          0.00010785  & 1.97 \\
   801 &     1.12072e-05 & 2.0  &     1.2238e-05  & 2.0  &          2.80153e-05 & 1.94 \\
  1601 &     2.80363e-06 & 2.0  &     3.06126e-06 & 2.0  &          7.50952e-06 & 1.9  \\
  3201 &     7.00966e-07 & 2.0  &     7.65386e-07 & 2.0  &          1.86901e-06 & 2.01 \\
  \bottomrule
\end{tabular}
\end{center}
\caption{$\delta^1(T)$, $\delta^2(T)$, and $\delta^\infty(T)$ errors and convergence rates for the 1D exact solution with a nonzero gravitational source at a final time of $t_f=5$ s and CFL 0.5.} \label{tab:exact_source}
\end{table}

\subsubsection{Pressure Perturbation over 1D Isothermal Steady State}\label{isothermal_test}
We now address the Remark~\ref{rem:isothermal}
and test our method with a benchmark that has isothermal equilibrium background state.
We follow the set up discussed in~\cite{navasmontilla2023atmospheric, li2021gravitational} which is composed of isothermal background state and  small perturbation to the pressure. 
We recall that a typical isothermal problem set up is given by:
\begin{equation}
    \begin{split}
    \label{isothermal_eq}
    \bv\cdot\mathbf{\hat{e}}_d = 0,\quad
    \bv\cdot\mathbf{\hat{e}}_1, ...,\bv\cdot\mathbf{\hat{e}}_{d-1} = const., \quad T = const.
    \\
    \rho = \rho_0\exp\Big(-\frac{x_d}{RT_0}\Big),
    \quad
    p = \rho RT_0 = p_0\exp\Big(-\frac{x_d}{RT_0}\Big),
    \quad
    g=1.
    \end{split}
\end{equation}
Let $\hat{x} = x - 2$. The initial condition is given by:
\begin{subequations}
\begin{align}
    \rho(x,0) &= \rho^e(\hat{x}) \\
    p(x,0) &= p^e(\hat{x}) + \eta\exp{(-100(\hat{x}-0.5)^2)} \\
    v(x,0) &= 0,
\end{align}
\end{subequations}
with constants
\begin{align}
    \rho_0 = 1,  p_0 = 1,  T_0 = 1, R = 1, c_v = 2.5, \gamma =1.4, g=1.
\end{align}
Since we use the potential temperature formulation, we solve for the variable $\theta$ in this work. The initial potential temperature is given by:
\begin{equation}
    \theta(x,0) = \frac{p_0}{R\rho^e(\hat{x})}\cdot\left[\frac{p^e(\hat{x}) + \eta\exp{(-100(\hat{x}-0.5)^2)}}{p_0}
    \right]^\frac{1}{\gamma}.
\end{equation}
Let the computational domain be $D=[0,5]$. We compare the solution for $\eta=0.01$ at a final time of $t=0.8$ with CFL $0.05$ in Figure~\ref{fig:pressure_perturbation} at different refinement levels. It is compared against a reference solution computed with 1280 cells. Our scheme is able to accurately capture the propagation of the waves produced due to the initial perturbation of the pressure. Our results also agree well with the literature~\cite{navasmontilla2023atmospheric, li2021gravitational, ghosh2016benchmarks}.
\begin{figure}
\centering
\begin{tikzpicture}
\begin{axis}[
    name=mainplot,
    xmin=1.0, xmax=4.0,
    xlabel={x (m)},
    ylabel={$\Delta p$},
    grid=minor,
    width=12.5cm,
    height=8.0cm,
    legend pos=north east]
\addplot[very thick, black, dashed]
    table [col sep=comma, x="Points:0", y="delta_p"]
    {Data/ref_pressure_perturb.txt};
\addlegendentry{reference}
\addplot[thin, orange]
    table [col sep=comma, x="Points:0", y="delta_p"]
    {Data/N200_pressure_perturb.txt};
\addlegendentry{N = 200}
\addplot[thin, cyan]
    table [col sep=comma, x="Points:0", y="delta_p"]
    {Data/N400_pressure_perturb.txt};
\addlegendentry{N = 400}
\addplot[thin, red]
    table [col sep=comma, x="Points:0", y="delta_p"]
    {Data/N800_pressure_perturb.txt};
\addlegendentry{N = 800}

\draw[black, thick]
  (axis cs:1.5,0.0065) rectangle (axis cs:1.6,0.00755);

\coordinate (zoomNE) at (axis cs:1.6,0.00755);
\coordinate (zoomSE) at (axis cs:1.6,0.0065);
\end{axis}

\begin{axis}[
    at={(4.5cm,3cm)}, 
    anchor=south west,
    width=4.0cm,
    height=4.0cm,
    xmin=1.5, xmax=1.6,
    ymin=0.0065, ymax=0.00755,
    grid=minor,
    axis background/.style={fill=white},
    xticklabel style={font=\scriptsize},
    yticklabel style={font=\scriptsize}
]

\addplot[very thick, black, dashed]
    table [col sep=comma, x="Points:0", y="delta_p"]
    {Data/ref_pressure_perturb.txt};

\addplot[thin, orange]
    table [col sep=comma, x="Points:0", y="delta_p"]
    {Data/N200_pressure_perturb.txt};

\addplot[thin, cyan]
    table [col sep=comma, x="Points:0", y="delta_p"]
    {Data/N400_pressure_perturb.txt};

\addplot[thin, red]
    table [col sep=comma, x="Points:0", y="delta_p"]
    {Data/N800_pressure_perturb.txt};

\coordinate (insetNW) at (rel axis cs:0,1);
\coordinate (insetSW) at (rel axis cs:0,0);
\end{axis}

\draw[black, thin] (zoomNE) -- (insetNW);
\draw[black, thin] (zoomSE) -- (insetSW);

\end{tikzpicture}
\caption{Pressure perturbation at $t=0.8$ with $\eta=0.01$ for different refinement levels and a reference solution of 3200 cells. The scheme accurately captures the propagating waves due to the initial perturbation of the pressure. A zoom in of the left peak is provided for a better view of convergence.}
\label{fig:pressure_perturbation}
\end{figure}

As stated in Remark~\ref{rem:isothermal}, our scheme preserves isothermal equilibrium states to $\calO(h^2)$ accuracy. We illustrate this fact in Table~\ref{iso_wb}.
\begin{table}
\begin{center}
\begin{tabular}{rrlrlrl}
\toprule
   $\bm{I}$ &   $\bm{\delta^1(T)}$ &      &   $\bm{\delta^2(T)}$ &      &   $\bm{\delta^\infty(T)}$ &      \\[0.25em]
   101 &     1.75044e-05 &      &     2.82713e-05 &      &          0.00011372  &      \\
   201 &     3.80232e-06 & 2.2  &     5.54011e-06 & 2.35 &          2.76685e-05 & 2.04 \\
   401 &     8.81143e-07 & 2.11 &     1.15692e-06 & 2.26 &          6.77312e-06 & 2.03 \\
   801 &     2.11935e-07 & 2.06 &     2.57626e-07 & 2.17 &          1.67573e-06 & 2.02 \\
  1601 &     5.19541e-08 & 2.03 &     6.01839e-08 & 2.1  &          4.16689e-07 & 2.01 \\
  3201 &     1.28755e-08 & 2.01 &     1.45107e-08 & 2.05 &          1.03902e-07 & 2.0  \\
\bottomrule
\end{tabular}
\end{center}\caption{Convergence rates for the well-balancing test with the isothermal background state.}
\label{iso_wb}
\end{table}

\subsubsection{Sod Shock Problem}
 This problem is similar to the Riemann problem, but it includes the source term. We will use this to test how well our method accurately captures shocks and contact discontinuities. The domain is $D=[0,1]$. Let $w_Z = (\rho_Z , v_Z , \theta_Z )^T$ be the primitive data for our problem. We use the following set of Riemann data for the problem set up:
\begin{equation}
    (\rho,v,\theta) =
    \begin{cases}
        (1, 0, 1) & x< 0.5 \\
        (0.125, 0, 1.54) & x\geq 0.5.
    \end{cases}
\end{equation}

The solution is presented in Figure~\ref{fig:sod_shock}. The solution is evaluated at time $t_f=0.2$ with 200 cells, and a reference solution is computed with a fine mesh of 3200 cells. We can see that the high-order scheme accurately captures the contact and shock waves that arise from the discontinuous initial data.

\begin{figure}
    \centering
    \begin{subfigure}[b]{0.32\linewidth}
        \centering
        \hspace{-0.9cm}
        \begin{tikzpicture}
            \begin{axis}[
                title={Density},
                xmin=0, xmax=1,
                xlabel={x (m)},
                ylabel={},
                grid=major,
                width=5.1cm, 
                height=5cm,
                legend pos=north east
            ]       
            \addplot[very thick, black, dashed] table [col sep=comma, x="Points:0", y="rho"] {Data/density_ref_sod_shock.txt};
            \addlegendentry{reference}
            \addplot[red, solid] table [col sep=comma, x="Points:0", y="rho"] {Data/density_sod_shock.txt};
            \addlegendentry{density}
            \end{axis}
        \end{tikzpicture}
        \caption{Density at time $t_f$}
    \end{subfigure}
    \hfill
    \begin{subfigure}[b]{0.32\linewidth}
        \centering
        \hspace{-0.9cm}
        \begin{tikzpicture}
            \begin{axis}[
                title={Velocity},
                xmin=0, xmax=1,
                ymin=-0.5, ymax=1.5,
                xlabel={x (m)},
                ylabel={},
                grid=major,
                width=5.1cm,
                height=5cm,
                legend pos=north east
            ]       
            \addplot[very thick, black, dashed] table [col sep=comma, x="Points:0", y="v"] {Data/velocity_ref_sod_shock.txt};
            \addlegendentry{reference}
            \addplot[red, solid] table [col sep=comma, x="Points:0", y="v"] {Data/velocity_sod_shock.txt};
            \addlegendentry{velocity}       
            \end{axis}
        \end{tikzpicture}
        \caption{Velocity at time $t_f$}
    \end{subfigure}
    \hfill
    \begin{subfigure}[b]{0.32\linewidth}
        \centering
        \hspace{-0.9cm}
        \begin{tikzpicture}
            \begin{axis}[
                title={Pressure},
                xmin=0, xmax=1,
                xlabel={x (m)},
                ylabel={},
                grid=major,
                width=5.1cm,
                height=5cm,
                legend pos=north east
            ]       
            \addplot[very thick, black, dashed] table [col sep=comma, x="Points:0", y="p"] {Data/pressure_ref_sod_shock.txt};
            \addlegendentry{reference}
            \addplot[red, solid] table [col sep=comma, x="Points:0", y="p"] {Data/pressure_sod_shock.txt};
            \addlegendentry{pressure}
            \end{axis}
        \end{tikzpicture}
        \caption{Pressure at time $t_f$}
    \end{subfigure}
    \caption{Density, velocity, and pressure for the sod shock problem at $t_f=0.2$ for $N=200$ cells. The data is plotted against a reference solution from a fine mesh of $N=3200$ cells.}
    \label{fig:sod_shock}
\end{figure}

\subsection{Benchmarks}\label{sec:validation}

We now illustrate the validity of our method using benchmark problems in the atmospheric flow literature. The CFL is set to 1.0 for all of these problems. 

\subsubsection{2D Rising Thermal Bubble}\label{bubble}
We consider the rising thermal bubble in 2D proposed by \cite{robert1993bubble}. This test is a standard benchmark problem in the numerical weather prediction community, see \cite{ahmad2007benchmarks, duarte2014moistflows}. The problem introduces a warm bubble by perturbing the potential temperature. This produces vertical momentum causing the bubble to rise and later deform into a mushroom-like shape. 
The given computational domain is $D=[0,5000]\times [0,10000]m^2$ and free-slip boundary conditions are imposed on all sides of the domain. Let $R=2000m$. We set $\mathbf{v}(x,t) = 0$, and let $\rho(x,t)$ solve the hydrostatic balance equation \eqref{algebraic_balance}. Set the potential temperature to be
\begin{equation}
    \theta(x,t) = 
    \begin{cases}
        300 + 2[1 - \frac{r_0}{2000}] & \text{if } (x-5000)^2+(y-2000)^2 \leq R^2 \\
        300 & \text{if } (x-5000)^2+(y-2000)^2>R^2,
    \end{cases}
\end{equation}
with $r_0 = \sqrt{(x-5000)^2+(y-2000)^2}$ and thermodynamic constants
\begin{align*}
    p_0 = 100000, c_v = 715, \gamma =1.4, g=9.8.
\end{align*}
We ran to a final time of $t_f=1020s$.
\begin{figure}
    \centering
        \subfloat{\adjustbox{width=0.6\linewidth, valign=b}{\includegraphics[trim={200bp 100bp 250bp 100bp}]{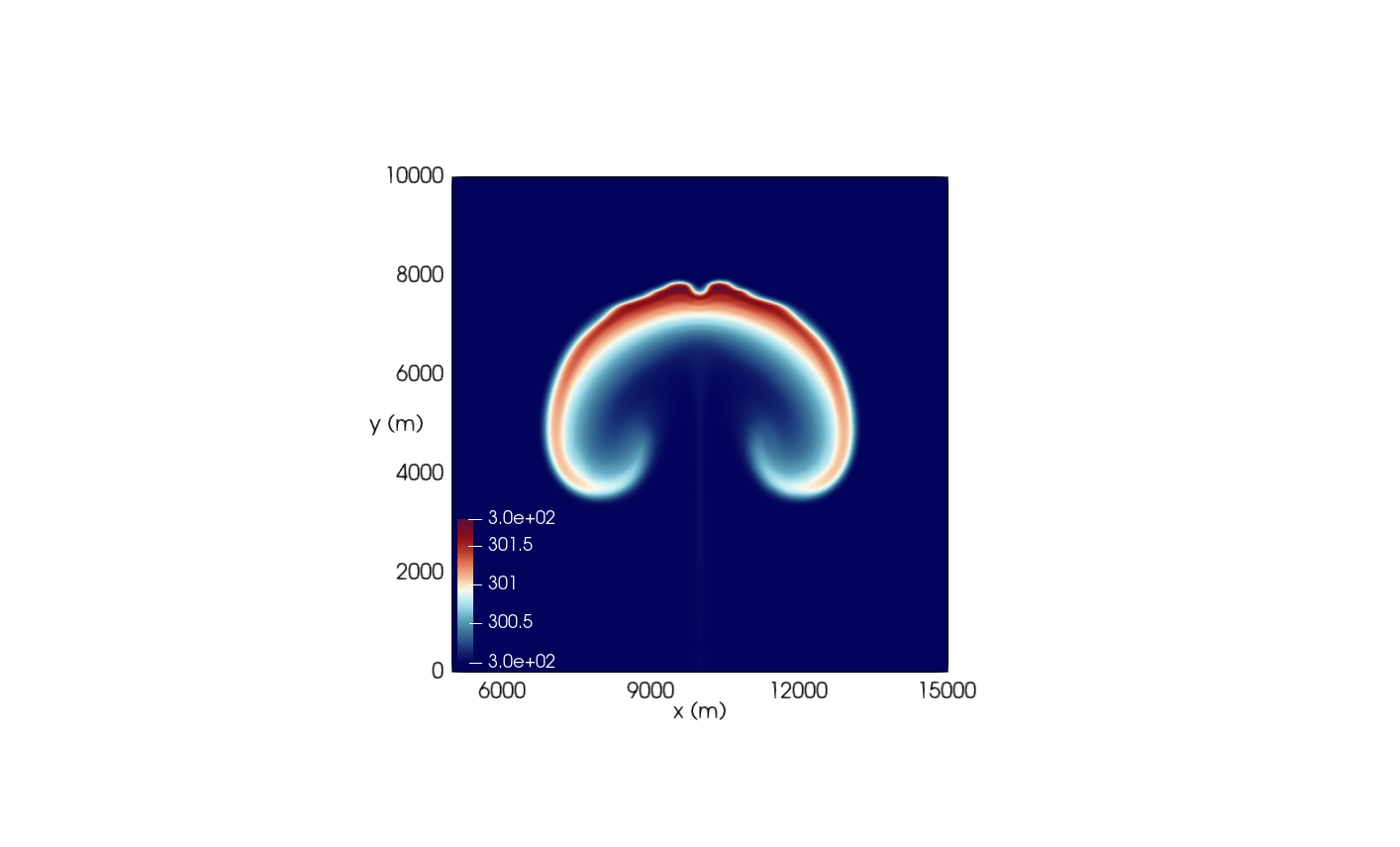}}}
        \hspace{-2.9cm}
        \subfloat{\adjustbox{width=0.6\linewidth, valign=b}{\includegraphics[trim={200bp 100bp 250bp 100bp}]{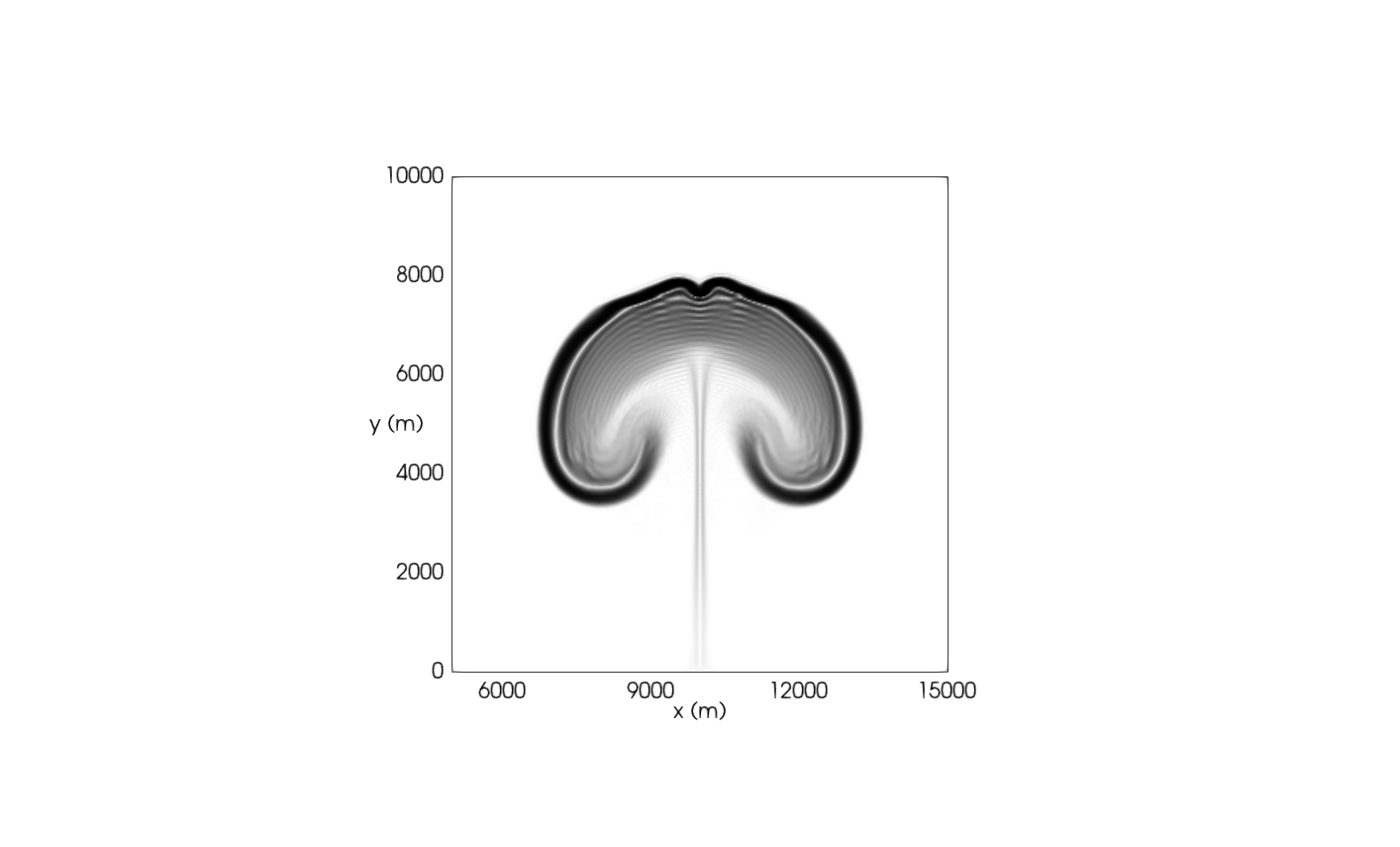}}}
    \caption{2D rising thermal bubble with a mesh size of $h=50$ m. A plot of the potential temperature perturbation (left) and the corresponding schlieren plot (right) are given at the final time $t_f=1020$ s.}
    \label{fig:rising_bubble}
\end{figure}

\subsubsection{2D Density Current}
We next test our method by modeling the 2-dimensional density current proposed by \cite{carpenter1990ppm}. This problem is another common benchmark in the numerical weather prediction community, and it models a cold air bubble descending and then propagating across the bottom boundary. This is done similarly to that in Section~\ref{bubble} by slightly perturbing the potential temperature inside the bubble and making it colder. We set the computational domain up as in \cite{MARRAS201577}, with $D=[0,25000]\times[0,6000]$. Let the radius be $r=1$ m. We set $\mathbf{v}(x,t) = 0$, and let $\rho(x,t)$ solve the hydrostatic balance equation \eqref{algebraic_balance}. Set the potential temperature to be
\begin{equation*}
    \theta(x,t) = 
    \begin{cases}
        300 - \frac{15}{2}[1+\cos{\pi r_0}] & \text{if } (\frac{x}{4000})^2+(\frac{y-3000}{2000})^2 \leq r^2 \\
        300 & \text{if } (\frac{x}{4000})^2+(\frac{y-3000}{2000})^2>r^2,
    \end{cases}
\end{equation*}
with $r_0 = \sqrt{(\frac{x}{4000})^2+(\frac{y-3000}{2000})^2 }$ and thermodynamic constants
\begin{align*}
    p_0 = 100000, c_v = 717.5, \gamma =1.4, g=9.806.
\end{align*}
The result at time $t_f=900$ s is seen in Figure \ref{fig:density_comparison} (a) for a mesh size of $h=50$ in both the $x$ and $y$ directions. The front location of the current agrees with that of \cite{MARRAS201577,GIRFOGLIO2025106510, ahmad2007benchmarks}, as well at the location of the rotors. The density current for various mesh refinements is provided in Figure \ref{fig:density_comparison} over time from $t_0=0$ s through $t_f=900$ s.


\begin{figure}[htbp]
\centering

\begin{subfigure}{\textwidth}
\centering
\includegraphics[width=0.49\linewidth,trim={125bp 200bp 50bp 250bp}]{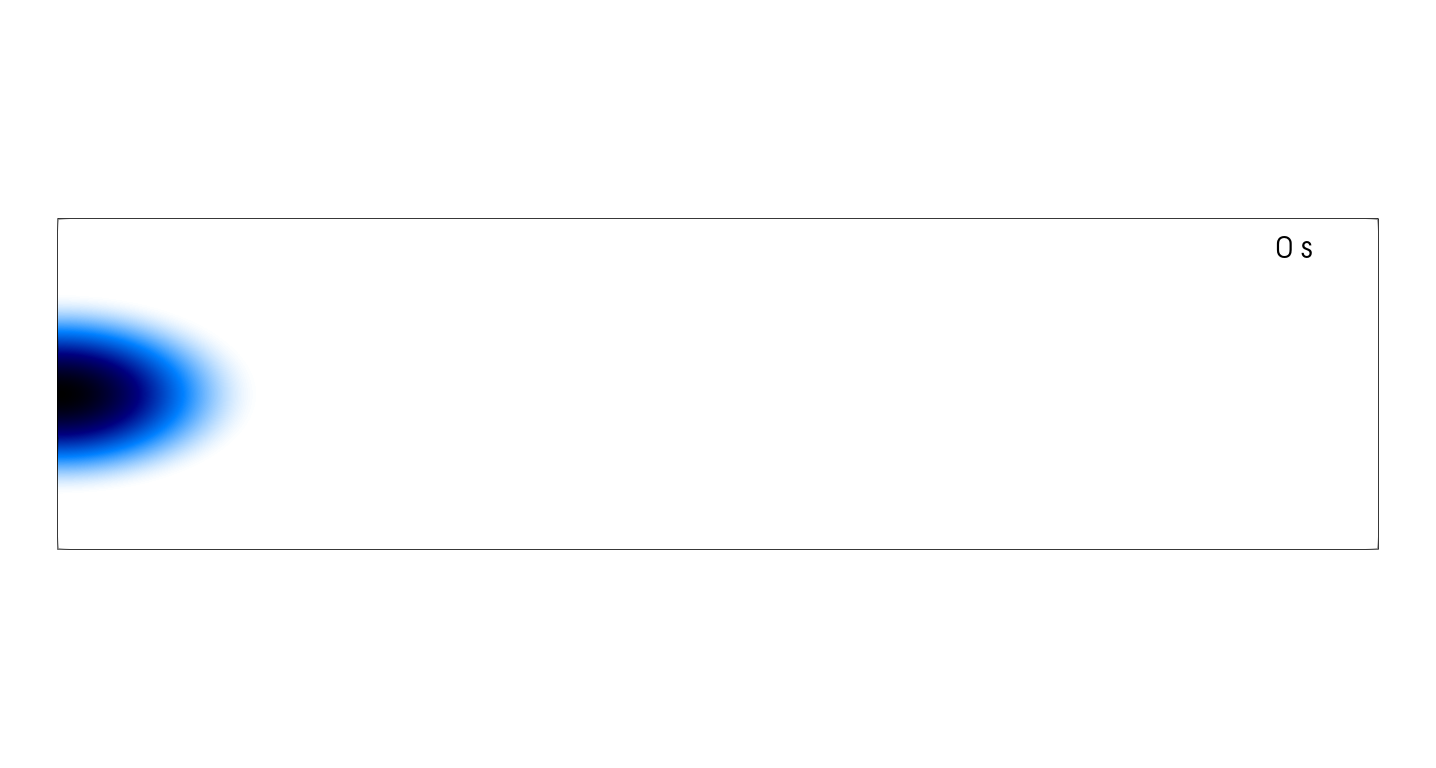}
\hfill
\includegraphics[width=0.49\linewidth,trim={125bp 200bp 50bp 250bp}]{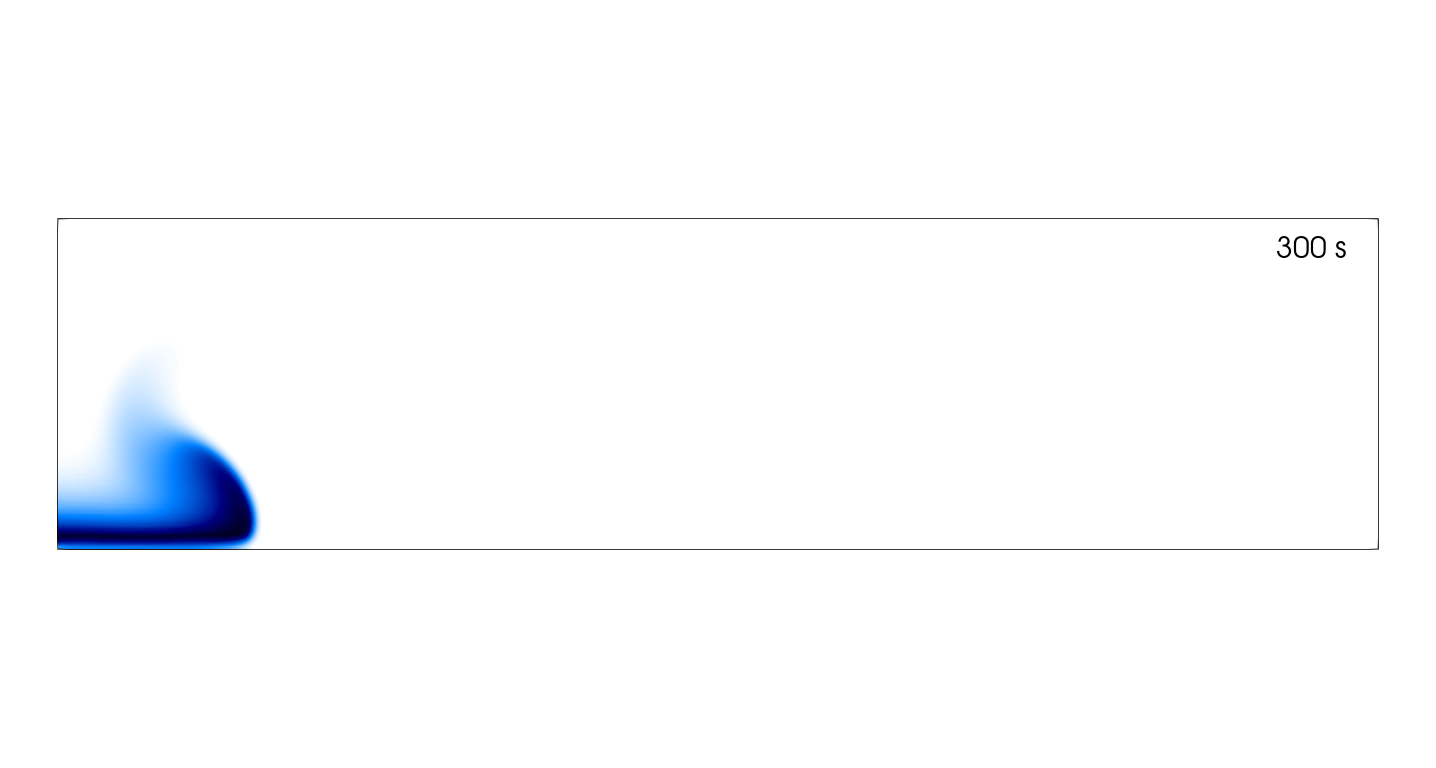}
\newline
\includegraphics[width=0.49\linewidth,trim={125bp 200bp 50bp 250bp}]{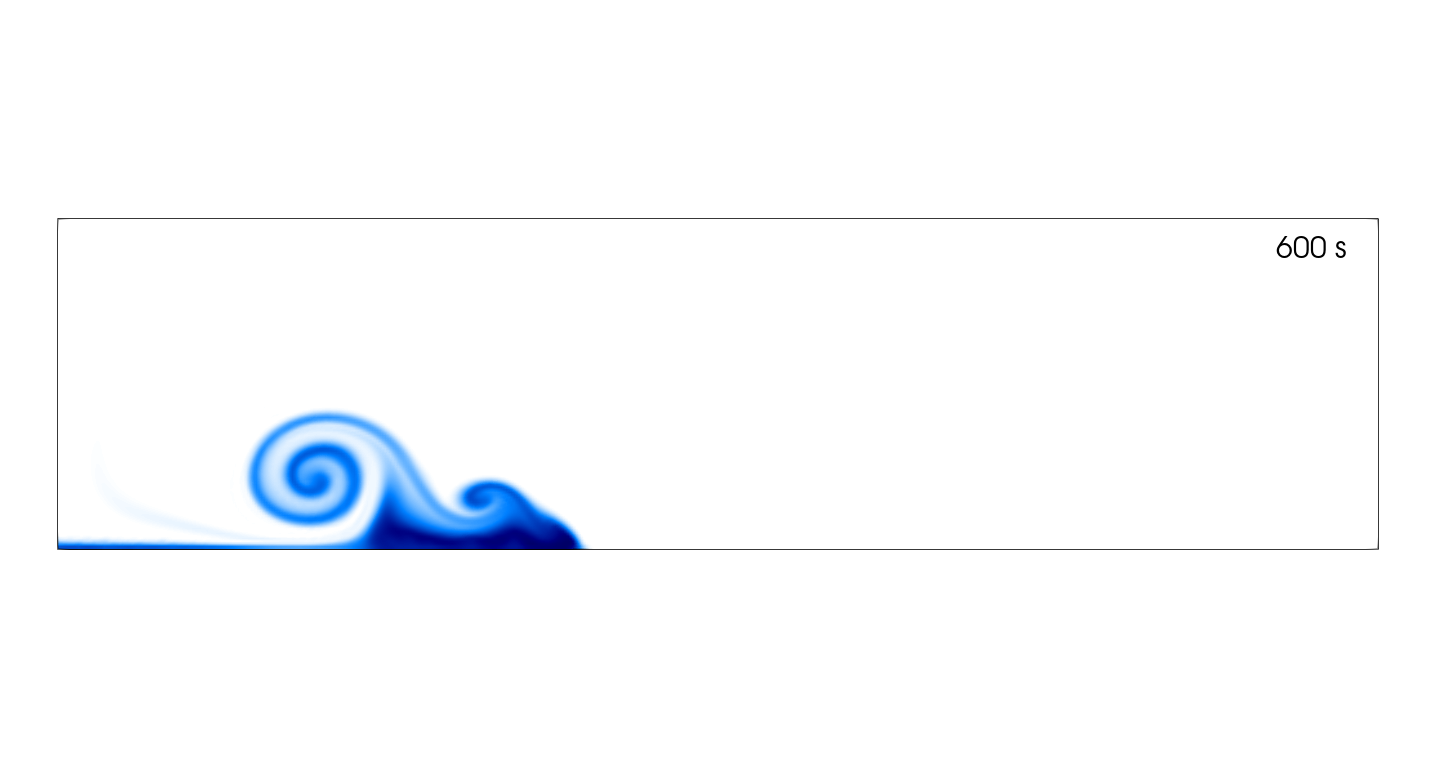}
\hfill
\includegraphics[width=0.49\linewidth,trim={125bp 200bp 50bp 250bp}]{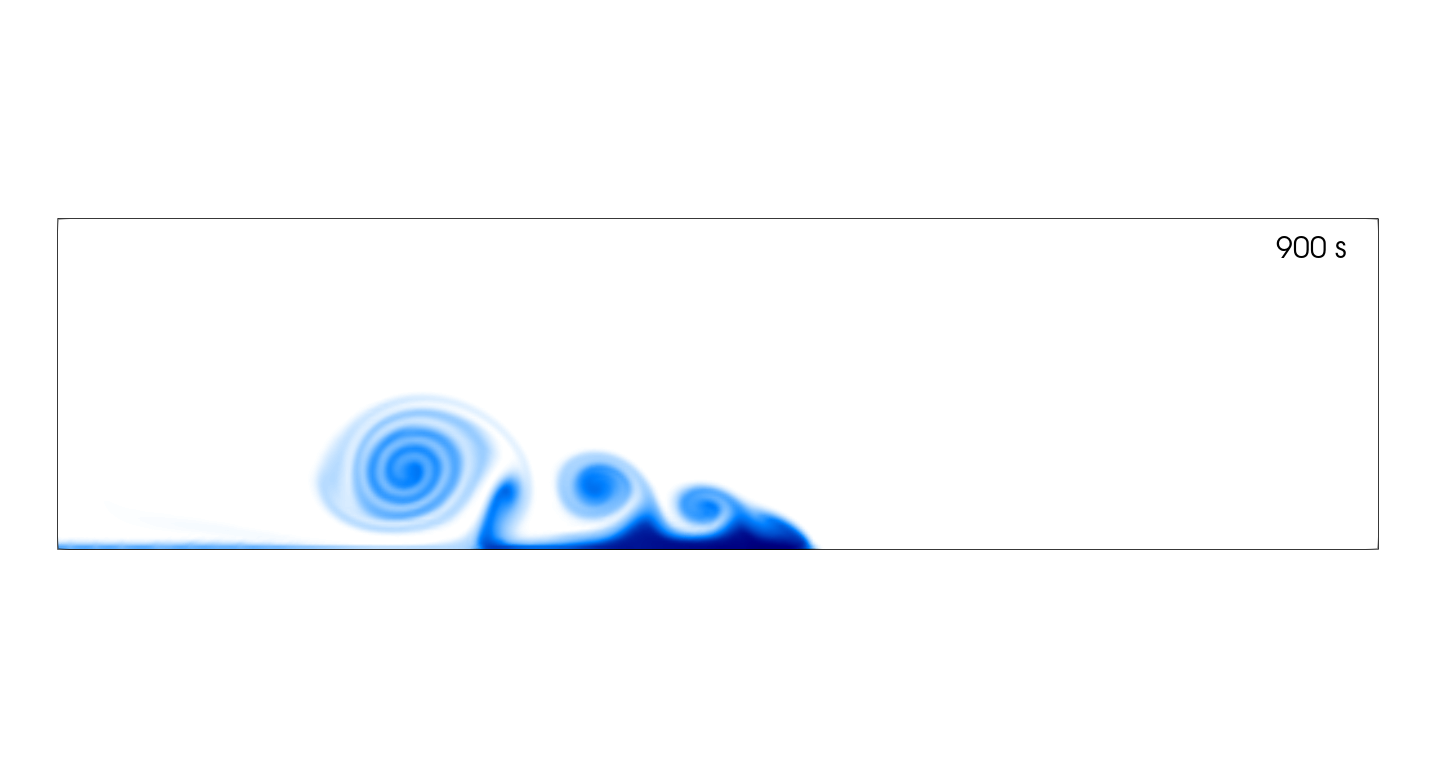}

\caption{$h=50$ m}
\label{fig:density_50}
\end{subfigure}

\vspace{0.8cm}

\begin{subfigure}{\textwidth}
\centering
\includegraphics[width=0.49\linewidth,trim={225bp 250bp 150bp 250bp}]{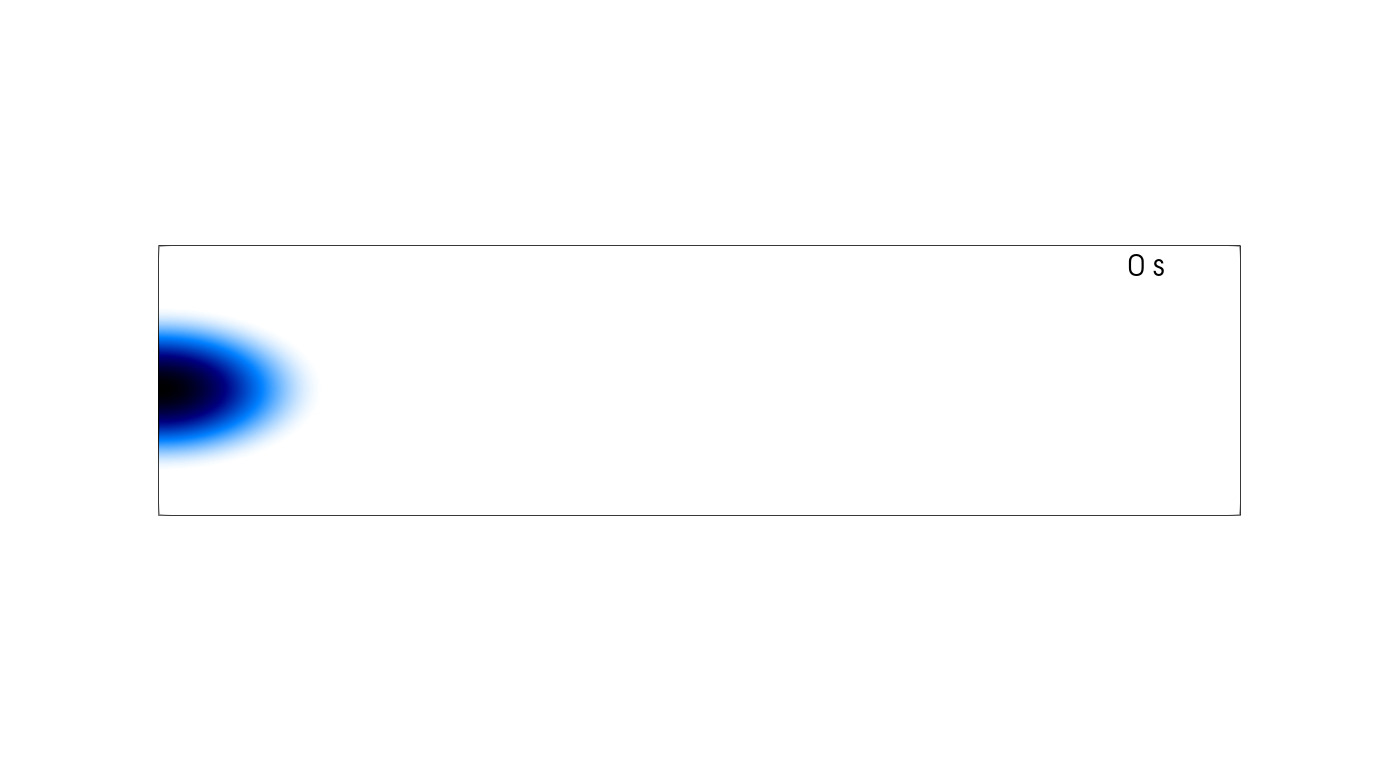}
\hfill
\includegraphics[width=0.49\linewidth,trim={225bp 250bp 150bp 250bp}]{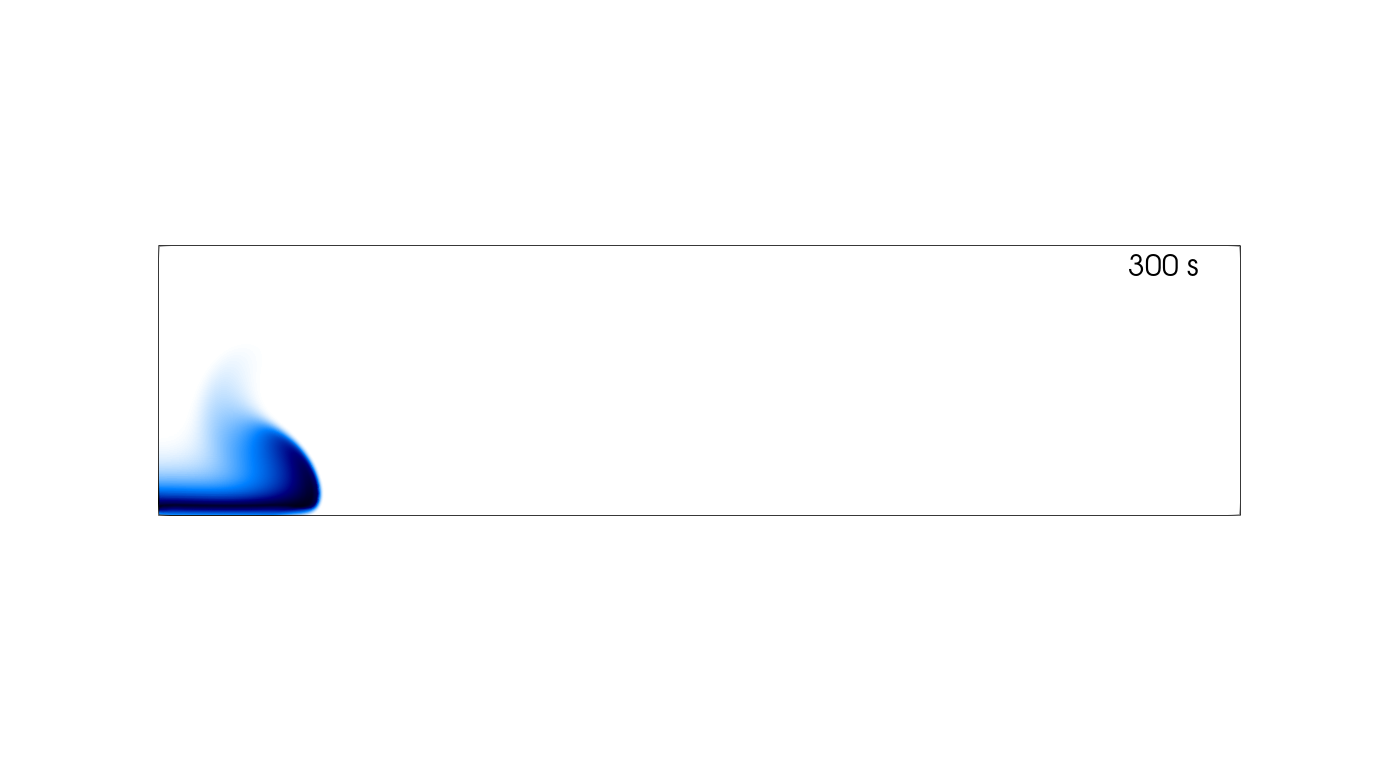}
\newline
\includegraphics[width=0.49\linewidth,trim={225bp 250bp 150bp 250bp}]{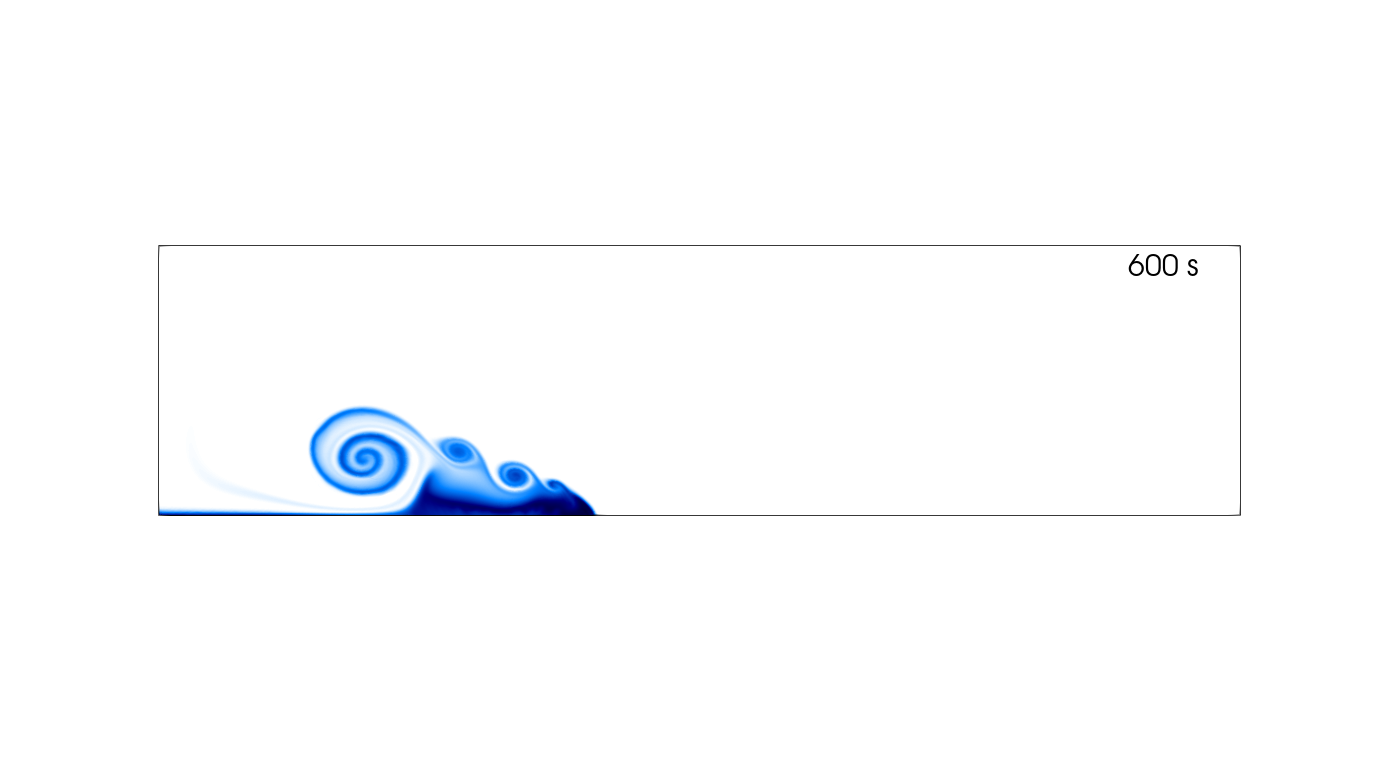}
\hfill
\includegraphics[width=0.49\linewidth,trim={225bp 250bp 150bp 250bp}]{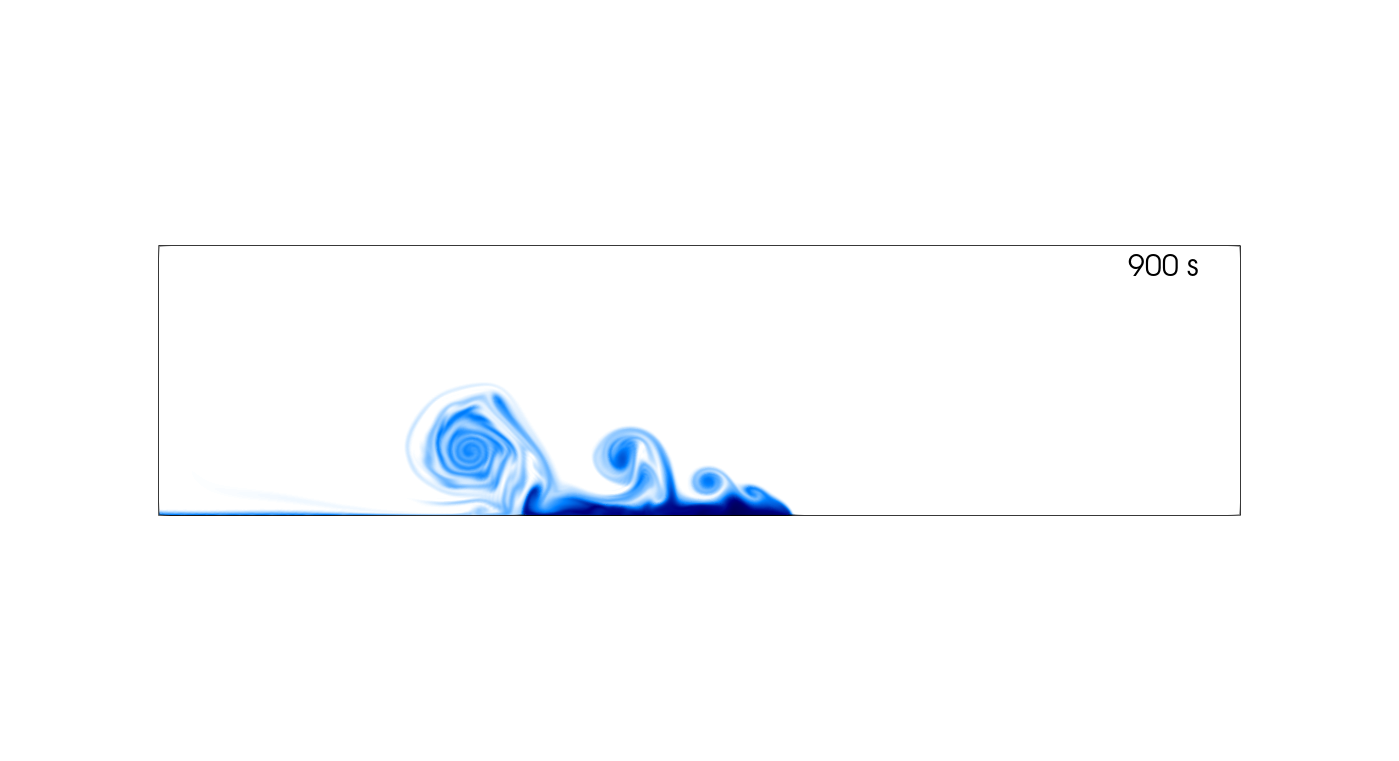}

\caption{$h=25$ m}
\label{fig:density_25}
\end{subfigure}

\vspace{0.8cm}

\begin{subfigure}{\textwidth}
\centering
\includegraphics[width=0.49\linewidth,trim={200bp 260bp 125bp 250bp}]{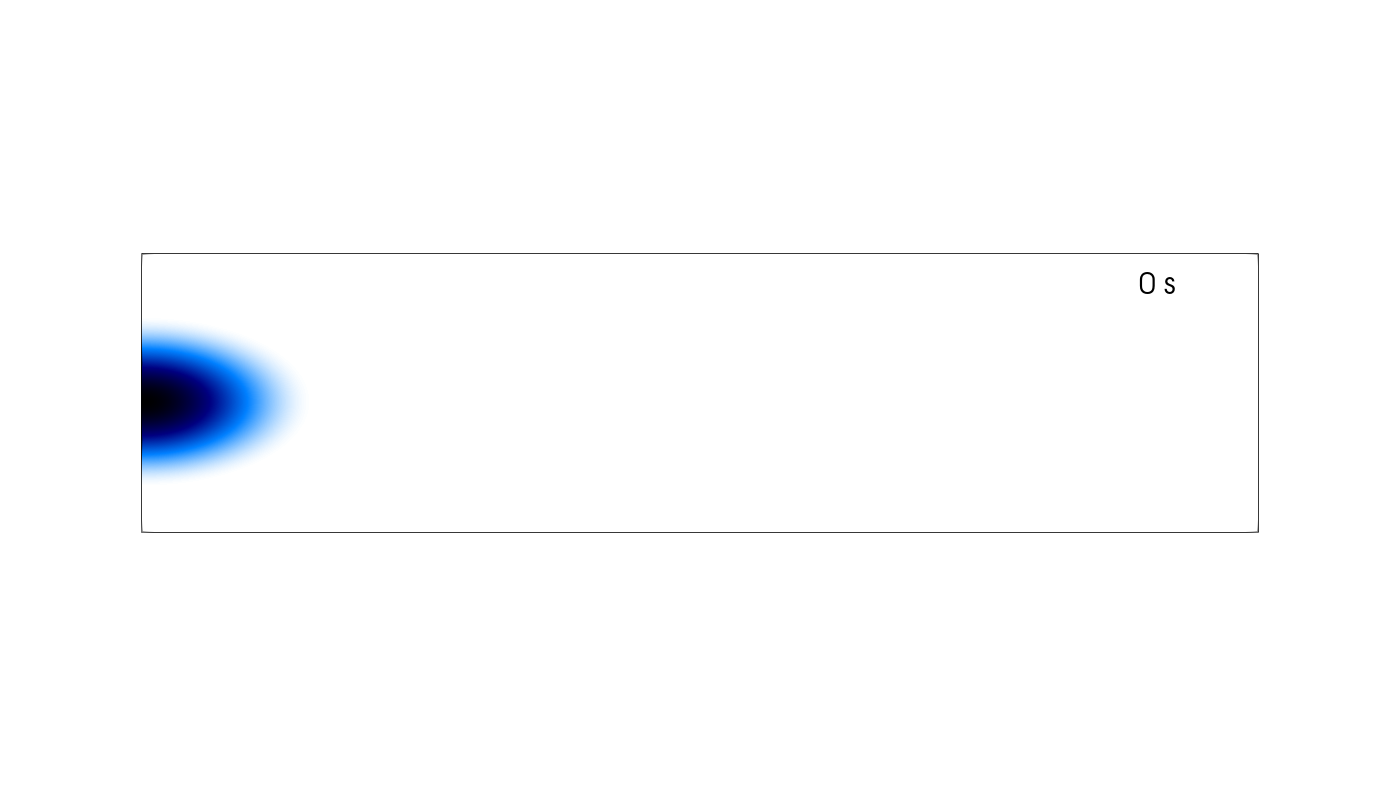}
\hfill
\includegraphics[width=0.49\linewidth,trim={200bp 260bp 125bp 250bp}]{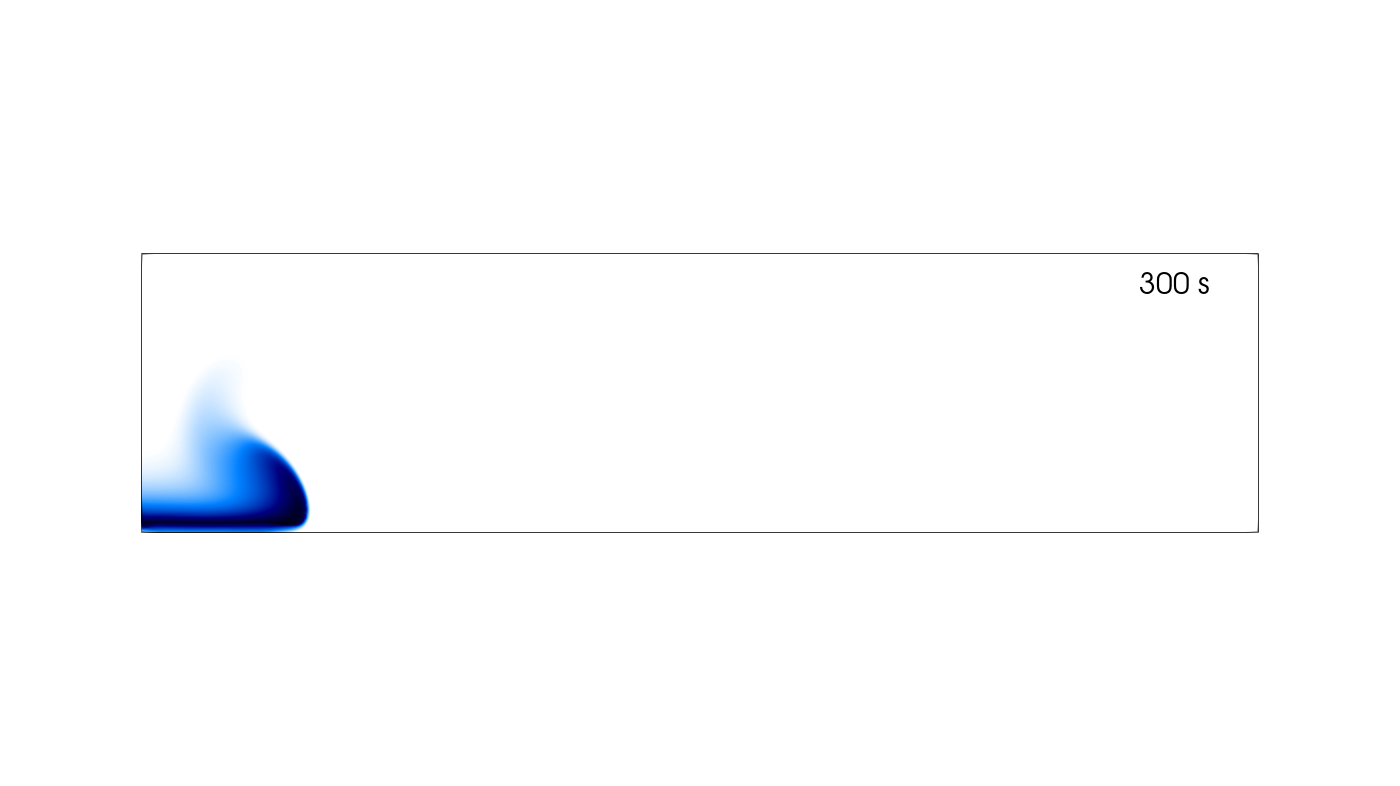}
\newline
\includegraphics[width=0.49\linewidth,trim={200bp 260bp 125bp 250bp}]{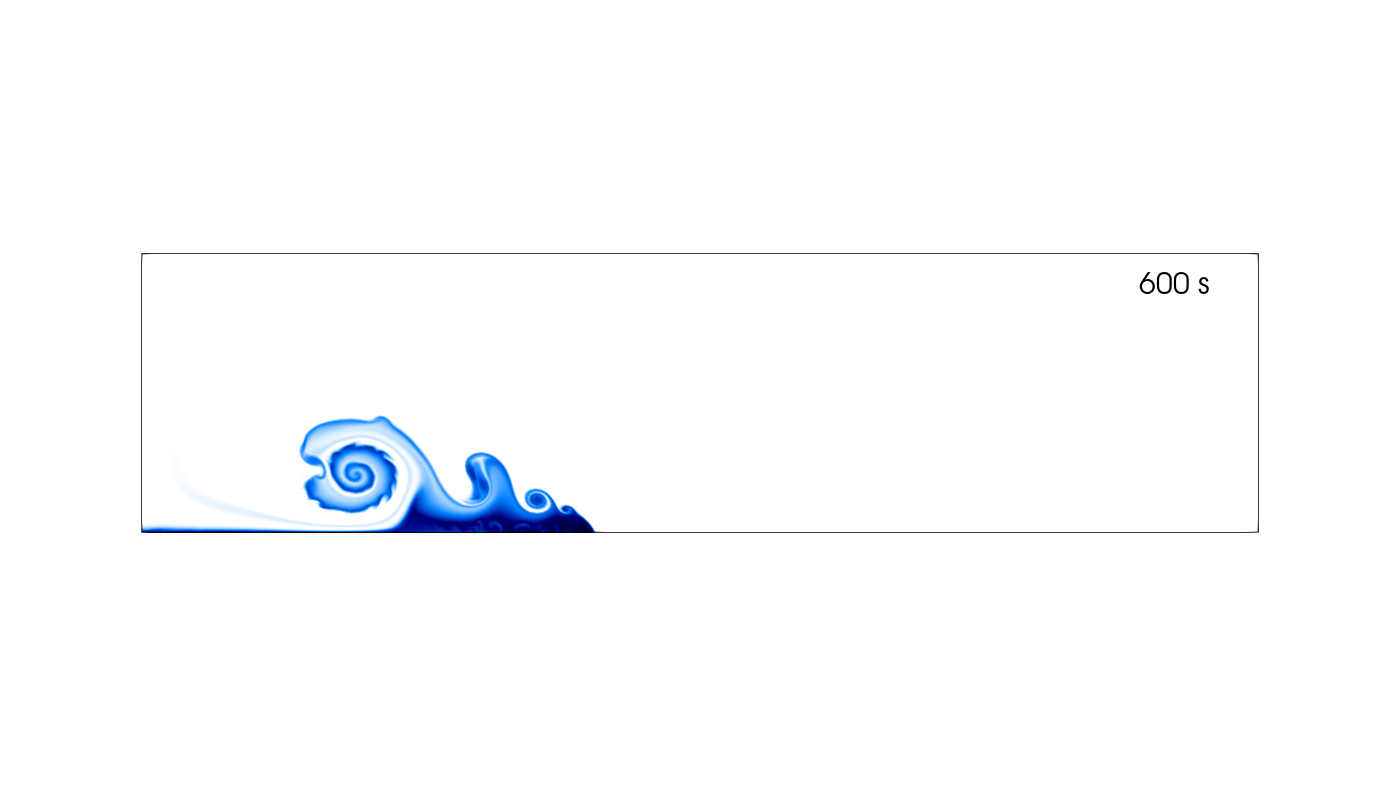}
\hfill
\includegraphics[width=0.49\linewidth,trim={200bp 260bp 125bp 250bp}]{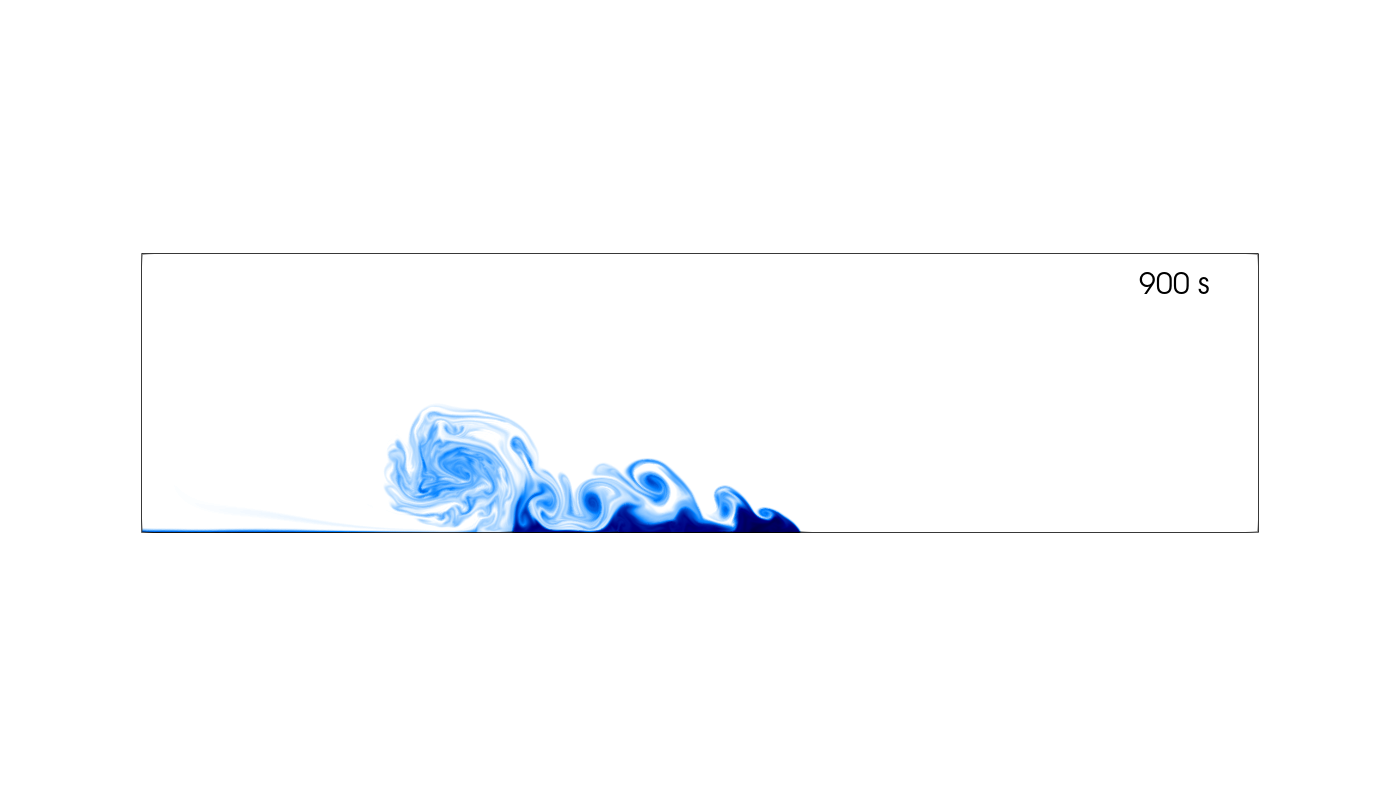}

\caption{$h=12.5$ m}
\label{fig:density_12pt5}
\end{subfigure}

\vspace{0.8cm}

\begin{subfigure}{\textwidth}
\centering
\includegraphics[width=0.49\linewidth,trim={215bp 260bp 125bp 250bp}]{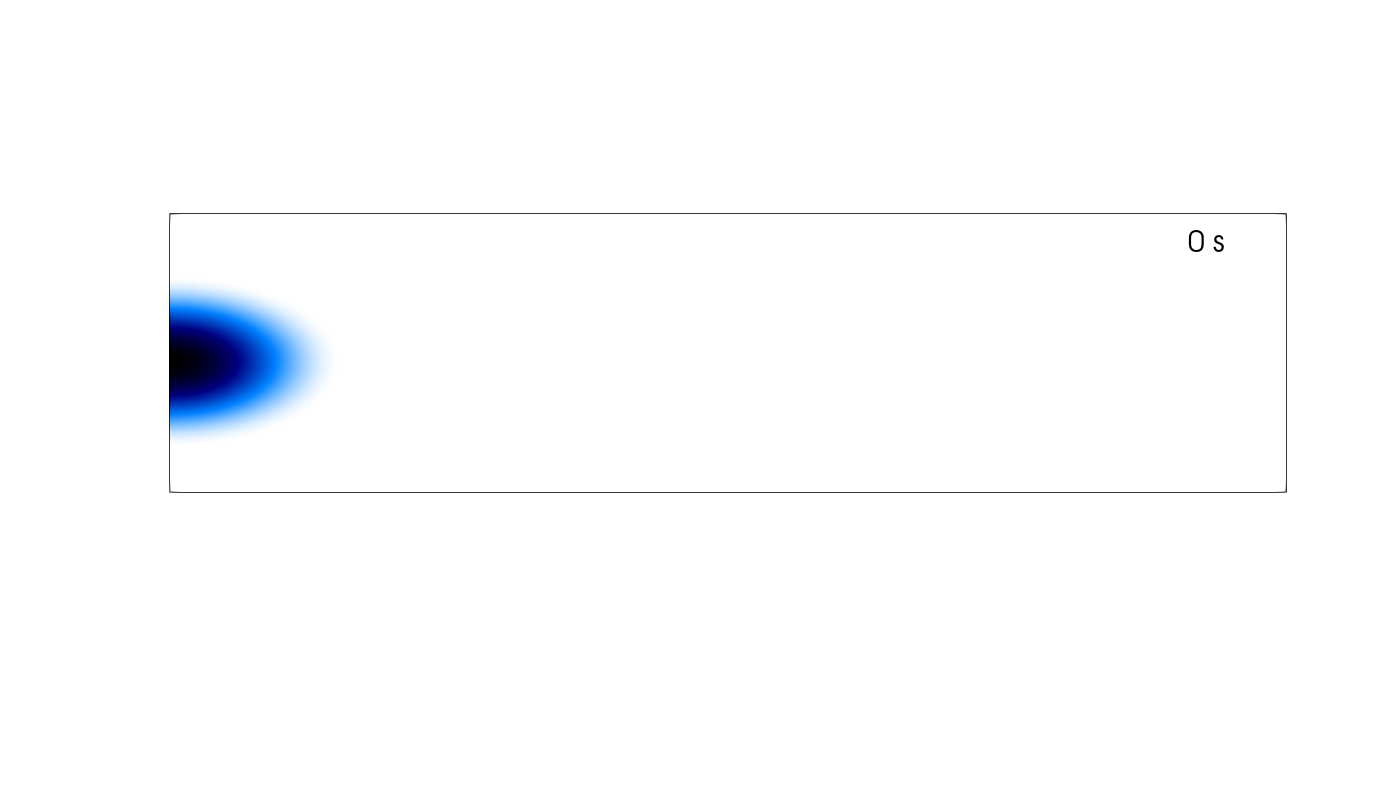}
\hfill
\includegraphics[width=0.49\linewidth,trim={215bp 260bp 125bp 250bp}]{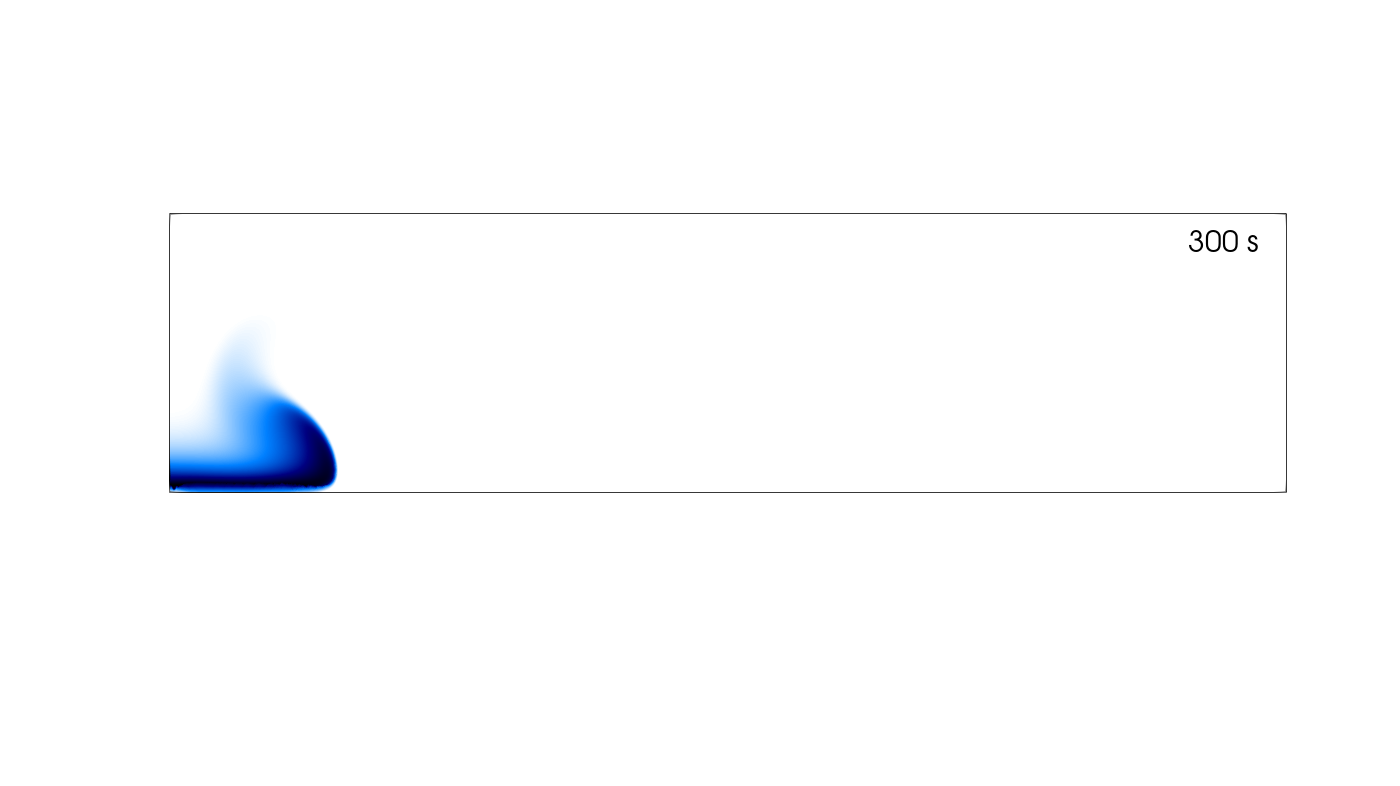}
\newline
\includegraphics[width=0.49\linewidth,trim={215bp 260bp 125bp 250bp}]{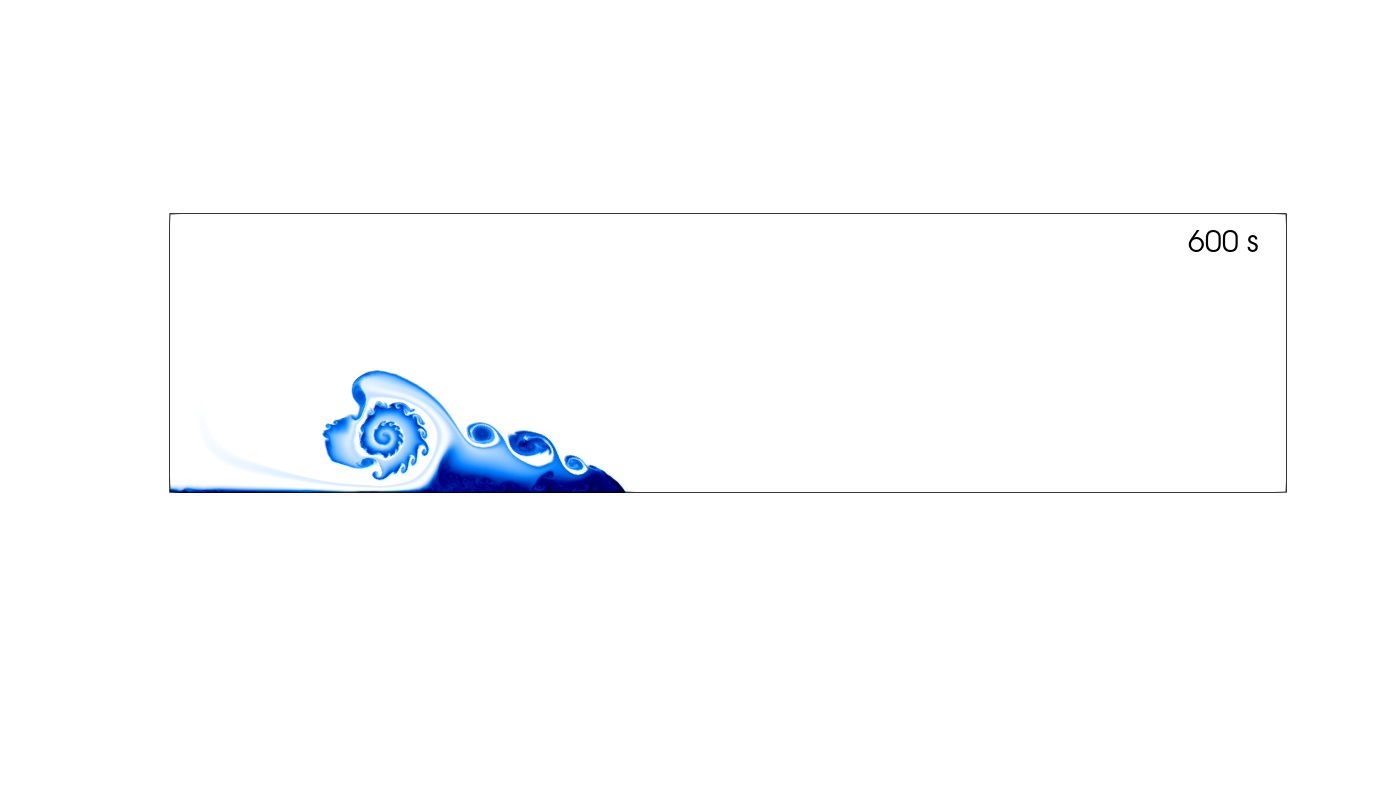}
\hfill
\includegraphics[width=0.49\linewidth,trim={215bp 260bp 125bp 250bp}]{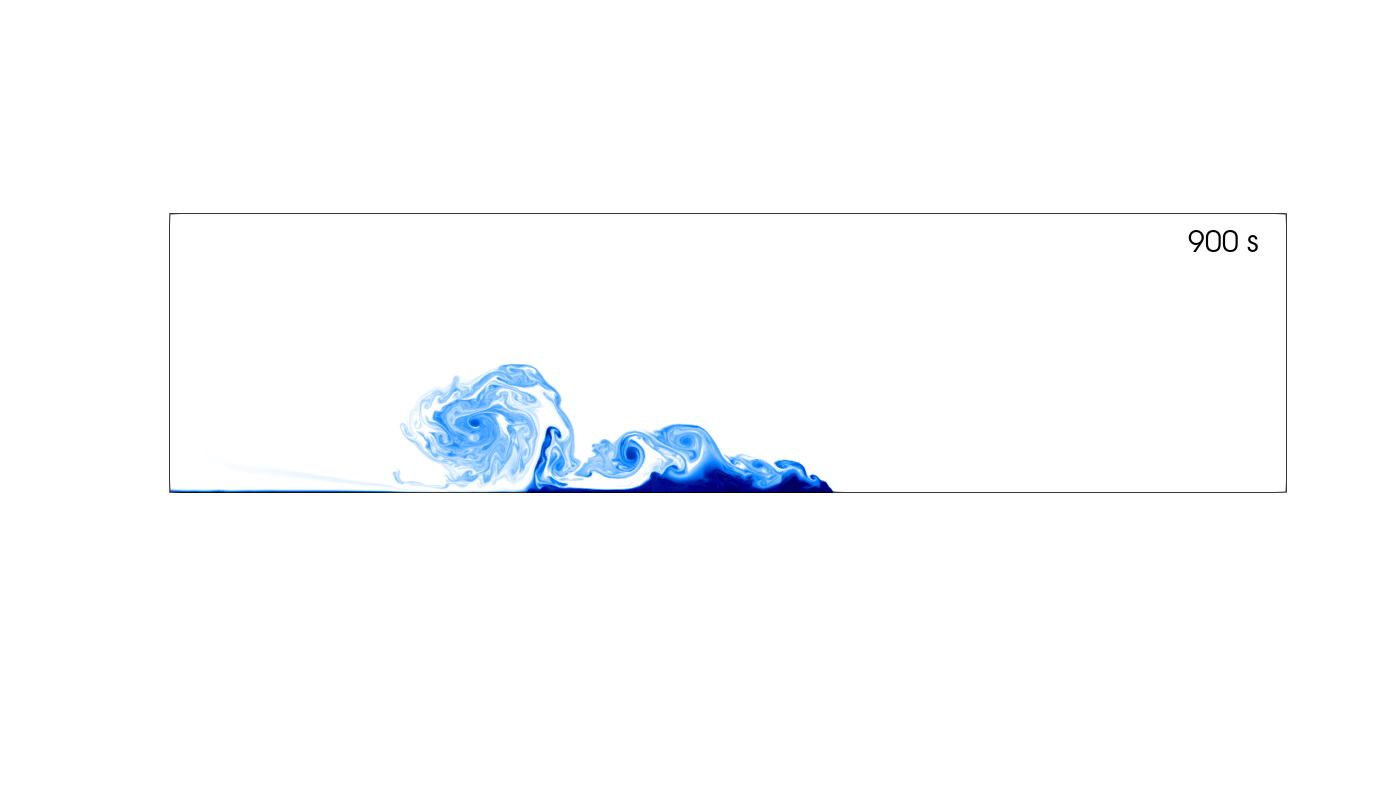}

\caption{$h=6.25$ m}
\label{fig:density_6pt25}
\end{subfigure}

\caption{2D density current over time from $t_0 = 0$ to $t_f=900$ s for different mesh sizes:  (a) $h=50$ m, (b) $h=25$ m, (c) $h = 12.5$ m, and (d) $h=6.25$ m. As the mesh is refined, the current travels slightly faster, causing an increase in the front location.}
\label{fig:density_comparison}
\end{figure}

\subsubsection{Blast Over a Mountain}
Our final test models a 2-dimensional ideal blast over a mountain with gravity. Instead of a small perturbation of potential temperature, the inner radius temperature is now much larger causing the blast. The computational domain in km is $D=[-20,20]\times[0,20]$ with a simple sinusoidal function representing the mountain. 
The mesh was constructed using the \texttt{GMSH} software~\cite{geuzaine2009gmsh} and is composed of triangular elements highlighting the robustness of the method on unstructured grids.
Let the radius be $r=0.75$ km. We set $\mathbf{v}(x,t) = 0$, and let $\rho(x,t)$ solve the hydrostatic balance in adiabatic equilibrium \eqref{adiabatic_eq}. Set the potential temperature to be
\begin{equation*}
    \theta(x,t) = 
    \begin{cases}
        3000 & \text{if } x^2+(y-4)^2 \leq r^2 \\
        300 & \text{if } x^2+(y-4)^2>r^2,
    \end{cases}
\end{equation*}
with thermodynamic constants
\begin{align*}
    p_0 = 0.1, c_v = 717\times10^{-4}, \gamma =1.4, g=0.00981.
\end{align*}
The result for density at time $t_f=10$ s is seen in Figure \ref{fig:mountain_blast}.

\begin{figure}
    \centering
    \includegraphics[width=0.9\linewidth]{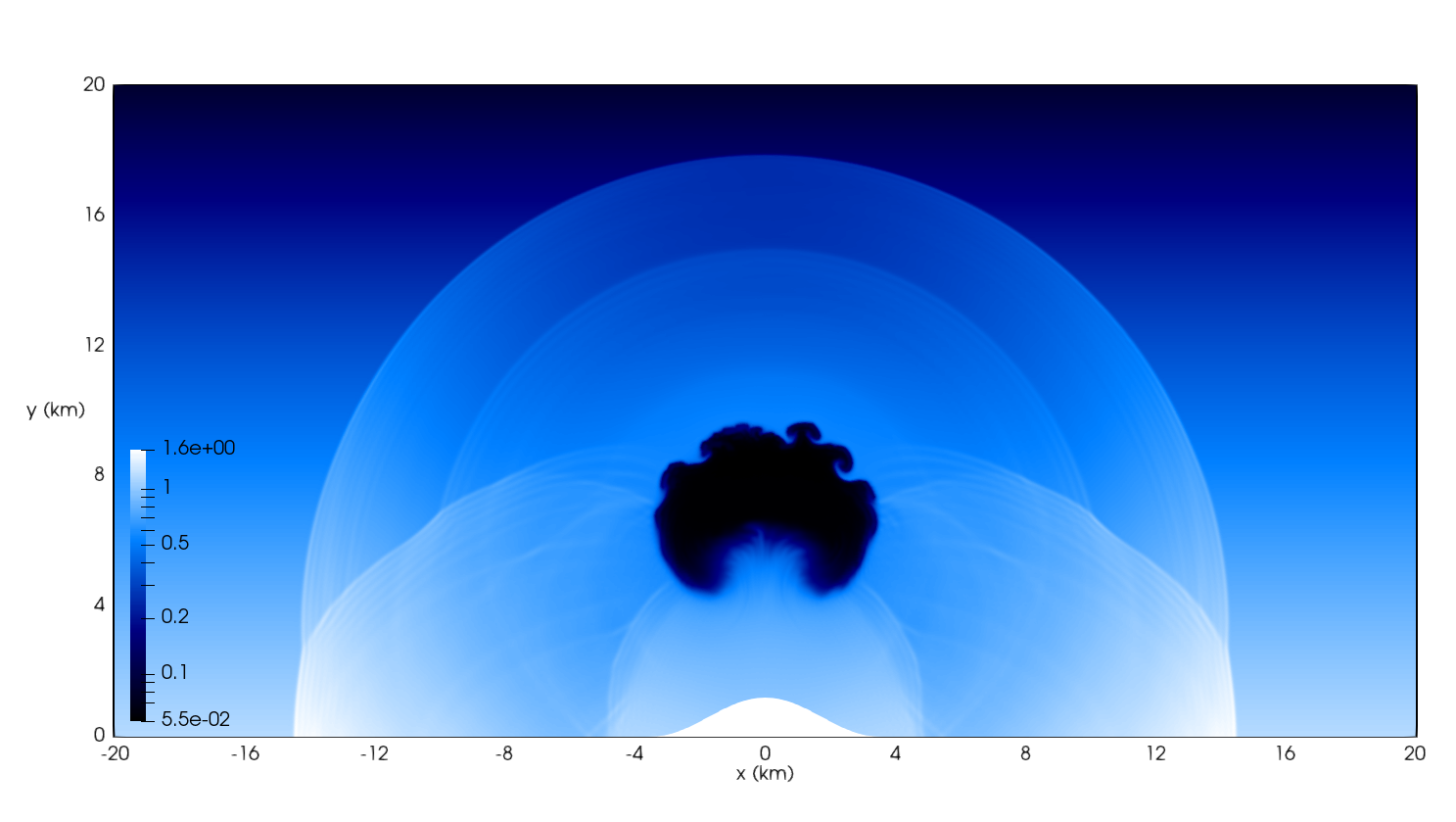}
    \caption{2D blast over a mountain}
    \label{fig:mountain_blast}
\end{figure}
\section{Conclusion}
This work presents a second-order invariant domain preserving and well-balanced scheme for the compressible atmospheric Euler equations. The method preserves positivity of density, potential temperature, and therefore pressure. The method also preserves equilibrium states given by the hydrostatic balance. We constructed a robust first order approximation using auxiliary states to maintain the well-balancing property. The well-balanced method was then extended to second order using convex limiting to preserve the invariant domain. The numerical illustrations ensure the validity of the method, as well as verifies convergence of our method. 

\appendix
 \section{Elementary Wave Structure}\label{sec:wave_structure}
We will now give an overview of the elementary wave structure for this problem. One can show the Jacobian matrix of the one-dimensional system is:
\begin{equation}
\mathbb{A}(\mathbf{u}) =
     \left[{\begin{array}{ccc}
     0 & 1 & 0\\
     -v^2 & 2v& \gamma p(\rho\theta)^{-1} \\
     -v\theta & \theta & v 
\end{array}}\right].
\end{equation}
The eigenvalues of this matrix are given by:
\[
\lambda_1(\mathbf{u}) = v - \sqrt{\frac{\gamma p}{\rho}},\text{\space} \lambda_2(\mathbf{u}) = v, 
\,
\lambda_3(\mathbf{u}) = v + \sqrt{\frac{\gamma p}{\rho}},
\]
and the corresponding eigenvectors are given by:
\[
\br_1 = \left({\begin{array}{ccc}
     \frac{1}{\theta} \\
     \frac{\rho v - \sqrt{\gamma\rho p}}{\rho\theta}\\
    1  
\end{array}}\right),
\,
\br_2 = \left({\begin{array}{ccc}
     \frac{1}{v} \\
     1 \\
     0 
\end{array}}\right),
\, 
\br_3 = \left({\begin{array}{ccc}
     \frac{1}{\theta} \\
     \frac{\rho v + \sqrt{\gamma\rho p}}{\rho\theta}\\
     1 
\end{array}}\right).
\]
\begin{lemma}[Wave structure]
    The 1-wave and 3-wave are genuinely nonlinear, and the 2-wave is linearly degenerate.
\end{lemma}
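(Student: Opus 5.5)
The plan is to verify the classical criteria directly. For $k=1,2,3$ compute $\nabla_{\bu}\lambda_k(\bu)\SCAL\br_k(\bu)$ in the conserved variables $\bu=(\rho,m,\sigma)$ with $\sigma:=\rho\theta$. Then show that this quantity vanishes identically for $k=2$ and never vanishes on $\calA$ for $k=1,3$. I would first check that the listed $\br_k$ really are eigenvectors of $\mathbb{A}(\bu)$; for instance $\mathbb{A}\br_2=(1,v,0)\tr=v\,\br_2$. For $v=0$ I would replace $\br_2$ by the rescaled vector $(1,v,0)\tr$, which changes nothing in the criterion.

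Introduce the sound speed $c:=\sqrt{\gamma p/\rho}$ and note that $\sqrt{\gamma\rho p}=\rho c$. The eigenvectors then become
\begin{align*}
\br_{1,3}=\Big(\tfrac1\theta,\ \tfrac{v\mp c}{\theta},\ 1\Big)\tr,
\qquad
\lambda_{1,3}=v\mp c.
\end{align*}
The two building blocks are $\nabla_{\bu} v$ and $\nabla_{\bu} c$. From $v=m/\rho$ we get $\nabla_{\bu} v=(-v/\rho,\ 1/\rho,\ 0)$. From the equation of state \eqref{eq:equation_of_state} we have $c^2=\gamma C_{\text{eos}}\sigma^\gamma/\rho$, which gives
\begin{align*}
\nabla_{\bu}(c^2)=\big(-c^2/\rho,\ 0,\ \gamma c^2/\sigma\big),
\qquad
\nabla_{\bu} c=\nabla_{\bu}(c^2)/(2c).
\end{align*}

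For the 2-wave, $\nabla v\SCAL(1,v,0)\tr=-v/\rho+v/\rho=0$ identically, so the 2-wave is linearly degenerate. For the 3-wave:
\begin{align*}
\nabla v\SCAL\br_3&=\frac{-v+(v+c)}{\rho\theta}=\frac{c}{\rho\theta},
\\
\nabla c\SCAL\br_3&=\frac{1}{2c}\Big(-\frac{c^2}{\rho\theta}+\frac{\gamma c^2}{\rho\theta}\Big)=\frac{(\gamma-1)c}{2\rho\theta}.
\end{align*}
Hence $\nabla\lambda_3\SCAL\br_3=\frac{(\gamma+1)c}{2\rho\theta}$. By the same computation with $-c$ in place of $c$, $\nabla\lambda_1\SCAL\br_1=-\frac{(\gamma+1)c}{2\rho\theta}$.

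On $\calA$ we have $\rho,\theta>0$, so $p>0$ and $c>0$. Since $\gamma>1$, neither expression vanishes, and the 1- and 3-waves are genuinely nonlinear. The main point requiring care is keeping the chain rule in conserved rather than primitive variables consistent, especially the $\sigma$-derivative of $c$. A cross-check is that the result reproduces the familiar Euler factor $(\gamma+1)/2$.
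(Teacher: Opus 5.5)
Your proposal is correct and follows the same route as the paper: compute $D_{\mathbf{u}}\lambda_k$ in the conserved variables and contract it with $\mathbf{r}_k$. For $k=1,3$ this gives $\mp\frac{(\gamma+1)c}{2\rho\theta}$, which is the paper's $\mp\frac{(\gamma+1)\sqrt{\gamma\rho p}}{2\rho^2\theta}$ since $\sqrt{\gamma\rho p}=\rho c$, and for $k=2$ it gives $0$; your rescaling of $\mathbf{r}_2$ to $(1,v,0)^T$ is a small improvement, because the paper's $(1/v,1,0)^T$ is undefined at $v=0$.
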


\begin{proof}
    The gradient of $\lambda_1$ and $\lambda_3$ with respect to $\mathbf{u}$ are
    \[
    D_\mathbf{u}\lambda_1(\mathbf{u}) = \left({\begin{array}{c}
    -\frac{v}{\rho} + \frac{1}{2\rho^2}\sqrt{\gamma\rho p}\\
    \frac{1}{\rho}\\
    -\frac{\gamma}{2\rho^2\theta}\sqrt{\gamma\rho p}
\end{array}}\right) \text{ and }
D_\mathbf{u}\lambda_3(\mathbf{u}) = \left({\begin{array}{c}
    -\frac{v}{\rho} - \frac{1}{2\rho^2}\sqrt{\gamma\rho p}\\
    \frac{1}{\rho}\\
    \frac{\gamma}{2\rho^2\theta}\sqrt{\gamma\rho p}
\end{array}}\right).
    \] 
    Hence, $D_\mathbf{u}\lambda_1(\mathbf{u})\cdot \mathbf{r_1} 
=  -\frac{(\gamma+1)\sqrt{\gamma\rho p}}{2\rho^2\theta} < 0$, and similarly
     $D_\mathbf{u}\lambda_3(\mathbf{u})\cdot \mathbf{r_3} 
=  \frac{(\gamma+1)\sqrt{\gamma\rho p}}{2\rho^2\theta} > 0$.
The gradient of $\lambda_2$ with respect to $\mathbf{u}$ is $D_\mathbf{u}\lambda_2(\mathbf{u}) = \left({\begin{array}{c}
    -\frac{v}{\rho}\\
    \frac{1}{\rho}\\
    0 
\end{array}}\right)$, and so 
$D_\mathbf{u}\lambda_2(\mathbf{u}) \cdot \mathbf{r_2} 
= -\frac{1}{\rho} + \frac{1}{\rho} = 0$.
\end{proof}
\begin{lemma}[Potential temperature across shocks/expansions] \label{lem:pt}
   $\theta$ remains constant across shock waves and expansion waves. 
\end{lemma}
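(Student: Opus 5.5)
The plan is to treat the two kinds of genuinely nonlinear waves (the $1$- and $3$-waves of the wave-structure lemma above) separately. For expansion waves I will show that $\theta$ is a $1$- and $3$-Riemann invariant. For shocks I will use the Rankine--Hugoniot conditions of the one-dimensional system~\eqref{model_problem} with $g=0$, in the frame moving with the shock.

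\emph{Expansion waves.} Across a $k$-rarefaction, $k\in\{1,3\}$, the state $\bu(s)=(\rho,m,\rho\theta)(s)$ follows an integral curve $\bu'(s)=\br_k(\bu(s))$. I will compute the derivative of $\theta=(\rho\theta)/\rho$ along this curve:
\begin{equation*}
  \frac{\diff\theta}{\diff s}
  =\frac{1}{\rho}\Big(\frac{\diff(\rho\theta)}{\diff s}-\theta\,\frac{\diff\rho}{\diff s}\Big)
  =\frac{1}{\rho}\Big(1-\theta\cdot\frac{1}{\theta}\Big)=0 .
\end{equation*}
This uses that the first and third components of $\br_1$ and $\br_3$ are $1/\theta$ and $1$. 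Equivalently, $D_\bu\theta\cdot\br_k=0$ with $D_\bu\theta=(-\theta/\rho,0,1/\rho)\tr$. Hence $\theta$ is constant across $1$- and $3$-expansions. This step is routine.

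\emph{Shocks.} For a discontinuity moving with speed $\sigma$, the Rankine--Hugoniot conditions for the first and third equations read $\sigma[\rho]=[\rho v]$ and $\sigma[\rho\theta]=[\rho v\theta]$. I will introduce the relative mass flux $j:=\rho(v-\sigma)$. The first condition says $j_L=j_R=:j$. The third condition then becomes $j\,\theta_L=j\,\theta_R$. So whenever $j\neq 0$ we get $\theta_L=\theta_R$.

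The main point is to rule out $j=0$ for a genuine $1$- or $3$-shock. If $j=0$, then $v_L=v_R=\sigma$, and the momentum condition $\sigma[\rho v]=[\rho v^2+p]$ reduces to $[p]=0$. Since $p=C_{\text{eos}}(\rho\theta)^\gamma$ is strictly monotone in $\rho\theta$, this forces $[\rho\theta]=0$. The discontinuity then moves with the fluid speed $\sigma=v=\lambda_2$ and carries only a jump in $\rho$ (and hence in $\theta$). That is precisely the linearly degenerate $2$-contact, not a $1$- or $3$-shock.

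For genuine shocks one needs to rule out $j=0$ properly, i.e.\ show that the shock speed differs from the fluid velocity on both sides. For this I would invoke the Lax entropy conditions, e.g.\ $\lambda_1(\bu_R)<\sigma<\lambda_1(\bu_L)$ for a $1$-shock. Combined with $v_L=v_R=\sigma$, this would give $\sigma<v_L-\sqrt{\gamma p_L/\rho_L}<v_L=\sigma$, a contradiction; the $3$-shock is analogous. Hence $j\neq0$, and $\theta$ is continuous across $1$- and $3$-shocks, which concludes the proof.
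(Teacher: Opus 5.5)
Your proof is correct, but it takes a different route from the paper for expansions.

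\textbf{Shocks.} This part follows the paper's argument. The paper also combines the Rankine--Hugoniot conditions for $\rho$ and $\rho\theta$ through the mass flux $\rho(S_L-v)$ and cancels it to get $\theta_{L^\ast}=\theta_L$. However, the paper divides by the mass flux without comment. You explicitly rule out $j=0$ via the Lax entropy inequality: $v_L=\sigma$ would give $\lambda_1(\bu_L)=\sigma-\sqrt{\gamma p_L/\rho_L}<\sigma$. You also identify the $j=0$ branch as the $2$-contact, across which $\theta$ genuinely jumps. Together these fill a gap the paper leaves open.

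\textbf{Expansions.} The paper combines the mass and $\rho\theta$ equations into the nonconservative form $\partial_t\theta+v\,\partial_x\theta=0$ and concludes from $D\theta/Dt=0$. That step tacitly needs the fan speeds to differ from the particle speed. For a self-similar fan $\theta=\hat\theta(\xi)$, $\xi=x/t$, one gets $\hat\theta'(\xi)\,(v-\xi)=0$ with $v-\xi=\pm\sqrt{\gamma p/\rho}\neq 0$. You instead verify $D_\bu\theta\cdot\br_k=0$ directly from the appendix eigenvectors, showing that $\theta$ is a $1$- and $3$-Riemann invariant.

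\textbf{Trade-offs.} Your argument is purely algebraic and self-contained for self-similar rarefactions. It also ties the result directly to the wave structure used in the Riemann solver. The paper's argument is more physical: it shows $\theta$ is transported along particle paths in any smooth region and does not need the eigenvectors.
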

\begin{proof}
 Suppose first that the 1-wave is a shock wave. We have the following Rankine-Hugoniot condition:
\begin{equation*}
\mathbf{f}(\mathbf{u}_L)\cdot \mathbf{n} - \mathbf{f}(\mathbf{u}_{L^\ast})\cdot \mathbf{n} 
= S_L( \mathbf{u}_L - \mathbf{u}_{L^\ast}).
\end{equation*}
Thus, $\rho_{L^\ast}\theta_{L^\ast}(S_L - v_{L^\ast}) = \rho_L\theta_L(S_L - v_L)$. If we let $\hat v_{L^\ast} = S_L - v_{L^\ast}$ and $\hat v_L = S_L - v_L$, by conservation of mass we have that $\rho_{L^\ast} \hat v_{L^\ast}= \rho_L \hat v_L$. Plugging this back in to our Rankine-Hugoniot condition for potential temperature we obtain that $\theta_{L^\ast} = \theta_L$.

Suppose now the 1-wave is an expansion wave. Expanding the potential temperature conservation equation and applying conservation of mass we have:
\begin{equation*}
    \rho\partial_t \theta + (\rho v)\cdot\nabla\theta = 0.
\end{equation*}
Thus, $\frac{D\theta}{Dt} := \partial_t \theta + v\cdot\nabla\theta = 0$, and so $\theta$ remains constant across the expansion wave.\\
Similarly, it can be shown that if the 3-wave is an expansion or shock, $\theta$ remains constant across the wave.
\end{proof}
%

\section{Solution to the Riemann problem}\label{sec:riemann_problem}

The methodology used for constructing the result in this section is given in Toro \cite{toro2013riemann}, as well Lax\cite{Lax_1957} and Godlewski \cite{godlewski2013numerical}.
\begin{proposition}
    The solution to the Riemann problem for pressure in the star region $p_\ast$ is given by the root of the algebraic equation
    \begin{equation}
        \phi(p) = f_L + f_R + u_R - u_L
    \end{equation}
    where $f_Z$, $Z\in\{L,R\}$ is
    \begin{equation}
    f_Z = 
    \begin{cases}
    (p - p_Z)\sqrt{\frac{1}{\rho_Z(p-p_Z)}\left(1 - \left(\frac{p_Z}{p}\right)^\frac{1}{\gamma}\right)} &\text{if $p>p_Z$ (shock)} \\
        \frac{2a_Z}{\gamma -1}\bigg[\bigg(\frac{p}{p_Z}\bigg)^\frac{\gamma -1}{2\gamma} - 1\bigg] & \text{if $p \leq p_Z$ (expansion)}.
    \end{cases}
\end{equation}
\end{proposition}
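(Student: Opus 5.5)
The final statement is the Riemann-problem proposition: the star-region pressure $p_\ast$ is the root of $\phi(p)=f_L+f_R+u_R-u_L$. I will follow the classical construction of Toro. The solution of the one-dimensional Riemann problem for \eqref{model_problem} (with $g=0$) consists of a 1-wave, a 2-contact and a 3-wave. The middle star region is split by the contact into a left part $L^\ast$ and a right part $R^\ast$. By the wave-structure lemma the 2-wave is linearly degenerate, so $v$ and $p$ are continuous across it. Hence $v_{L^\ast}=v_{R^\ast}=:v_\ast$ and $p_{L^\ast}=p_{R^\ast}=:p_\ast$. The plan is to derive, for each outer wave, a function $f_Z(p)$ with
\[
v_\ast=v_L-f_L(p_\ast),\qquad v_\ast=v_R+f_R(p_\ast).
\]
Eliminating $v_\ast$ gives $f_L(p_\ast)+f_R(p_\ast)+v_R-v_L=0$, which is exactly the claimed root condition.

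\textbf{Expansion case ($p\le p_Z$).} Lemma~\ref{lem:pt} says $\theta$ is constant across 1- and 3-waves. So inside the $Z$-wave $p=C_{\text{eos}}\theta_Z^\gamma\rho^\gamma$, i.e.\ the flow is isentropic with sound speed $a=\sqrt{\gamma p/\rho}$. The generalized Riemann invariant along $\br_1$ gives $v+\frac{2a}{\gamma-1}=\text{const}$. Writing $a/a_Z=(p/p_Z)^{(\gamma-1)/(2\gamma)}$ then yields
\[
f_Z=\frac{2a_Z}{\gamma-1}\Big[\big(p/p_Z\big)^{\frac{\gamma-1}{2\gamma}}-1\Big].
\]
The 3-wave is symmetric.

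\textbf{Shock case ($p>p_Z$).} This is the step that differs from the energy-formulation Euler system. Lemma~\ref{lem:pt} again gives $\theta_{Z^\ast}=\theta_Z$, so $\rho_{Z^\ast}=\rho_Z(p/p_Z)^{1/\gamma}$: the Hugoniot curve coincides with the isentrope. Mass and momentum Rankine--Hugoniot conditions give $(v_Z-v_\ast)^2=(p-p_Z)(1/\rho_Z-1/\rho_{Z^\ast})$. Substituting $\rho_{Z^\ast}$,
\[
|v_Z-v_\ast|=(p-p_Z)\sqrt{\tfrac{1}{\rho_Z(p-p_Z)}\big(1-(p_Z/p)^{1/\gamma}\big)},
\]
and the admissible (compressive) sign gives the stated $f_Z$.

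\textbf{Closing.} Finally I would check that each $f_Z$ is continuous at $p=p_Z$ (both branches vanish there) and increasing in $p$. Then $\phi$ is monotone, so the root is unique whenever it exists; under a vacuum-type non-degeneracy condition, existence follows by the intermediate value theorem.

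\textbf{Main obstacle.} I expect the delicate point to be the shock branch. There I must carefully justify the choice of sign and the entropy admissibility (compressive shock, Lax condition), because with $\theta$ conserved the shock relation no longer has the usual Hugoniot form. Monotonicity of the shock branch also needs a short derivative computation to confirm.
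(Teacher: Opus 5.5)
Your proposal is correct and follows essentially the paper's route. For the shock branch, the paper likewise combines the mass and momentum Rankine--Hugoniot relations, via the mass flux $Q_L$ with its positive root taken (your compressive-sign choice), with $\theta_{\ast L}=\theta_L$ from Lemma~\ref{lem:pt}; for the rarefaction branch it uses $\rho_{\ast L}=\rho_L(p_\ast/p_L)^{1/\gamma}$ and the Riemann invariant $v+2a/(\gamma-1)$, exactly as you do. Your explicit elimination of $v_\ast$ through continuity of $v$ and $p$ across the contact fills in a step the paper leaves implicit; that continuity holds because $v$ and $\rho\theta$ (hence $p$) are 2-Riemann invariants, since $\mathbf{r}_2\propto(1,v,0)^T$, not merely because the 2-field is linearly degenerate.
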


\begin{proof}
    Suppose first that $f_L$ is a shock. Let $\hat{v}_L = v_L - S_L$ and $\hat{v}_\ast = v_\ast - S_L$. We have the following Rankine-Hugoniot conditions:
    \begin{align}
        \rho_L\hat{v}_L &= \rho_{\ast L}\hat{v}_\ast \\
        \rho_L\hat{v}_L^2 + p_L &= \rho_{\ast L}\hat{v}_\ast^2 + p_\ast \label{rh_mom_L}\\
        \rho_L\theta_L\hat{v}_L &= \rho_{\ast L}\theta_{\ast L}\hat{v}_\ast \label{rh_theta}
    \end{align}
    We introduce the mass flux
    \begin{equation}
        Q_L \equiv \rho_L\hat{v}_L = \rho_{\ast L}\hat{v}_\ast, \label{m_flux_L}
    \end{equation}
    and plugging $Q_L$ into \eqref{rh_mom_L} we get
    \begin{align*}
        Q_L = - \frac{p_\ast - p_L}{\hat{v}_\ast - \hat{v}_L}=- \frac{p_\ast - p_L}{v_\ast - v_L},
    \end{align*}
    where we used that $\hat{v}_\ast - \hat{v}_L = v_\ast - S_L - v_L + S_L = v_\ast - v_L$. This in turn gives us the formula for the star-state velocity:
    \begin{equation}
        v_\ast = v_L - \frac{p_\ast - p_L}{Q_L}. \label{v_ast_left}
    \end{equation}
    Also, by \eqref{m_flux_L}
    \begin{equation}
        \hat{v}_L = \frac{Q_L}{\rho_L} \text{ and } \hat{v}_\ast = \frac{Q_L}{\rho_{\ast L}}
    \end{equation}
    Thus,
       \[   
        Q_L^2 = - \frac{p_\ast - p_L}{\frac{1}{\rho_{\ast L}} - \frac{1}{\rho_L}}. 
    \]
    Note by lemma 2 above (equivalently by \eqref{rh_theta}), $\theta_{\ast L} = \theta_L$. Relating $\rho_{\ast L}$ to the density behind the shock $p_\ast$ gives us
    \begin{align*}
        \rho_{\ast L} &= \frac{1}{C_{eos}^\frac{1}{\gamma}\theta_L}p_\ast^\frac{1}{\gamma}, 
    \end{align*}
    Thus,
    \begin{equation*}
        Q_L^2 = - \frac{p_\ast - p_L}{\frac{C_{eos}^\frac{1}{\gamma}\theta_L}{p_\ast^\frac{1}{\gamma}} - \frac{1}{\rho_L}} = - \frac{p_\ast - p_L}{\frac{C_{eos}^\frac{1}{\gamma}\theta_L\rho_L - p_\ast^\frac{1}{\gamma}}{p_\ast^\frac{1}{\gamma}\rho_L}}.
    \end{equation*}
    Simplifying gives us
    \begin{equation}
        Q_L = \bigg[\frac{(p_\ast - p_L)p_\ast^\frac{1}{\gamma}\rho_L}{p_\ast^\frac{1}{\gamma} - C_{eos}^\frac{1}{\gamma}\theta_L\rho_L}\bigg]^\frac{1}{2}
    \end{equation}
    Plugging into \eqref{v_ast_left} we obtain our desired relation. We can similarly show the relation for $f_R$ in the case of a shock.

Suppose now $f_L$ is an expansion wave. From the equation of state we have
\begin{equation}
    p_L = C_{eos}\theta_L^\gamma\rho_L^\gamma \text{ and } p_\ast = C_{eos}\theta_{\ast L}^\gamma\rho_{\ast L}^\gamma.
\end{equation}
   Since $\theta$ is constant across expansion waves, i.e., $\theta_{\ast L} = \theta_L$, we obtain the relation
    \begin{equation}
        \rho_{\ast L} = \rho_L\bigg(\frac{p_\ast}{p_L}\bigg)^{\frac{1}{\gamma}} \label{exp_relation}
    \end{equation}
    In the previous section we showed that the Riemann invariant for the left state is is constant. Evaluating the constant gives us 
    \begin{equation}
        v_L + \frac{2a_L}{\gamma -1} = v_\ast + \frac{2a_{\ast L}}{\gamma-1}
    \end{equation}
Rewriting sound speed in terms of \eqref{exp_relation} gives us
\begin{equation}
    a_{\ast L} = \bigg(\frac{\gamma p_\ast}{\rho_{\ast L}}\bigg)^\frac{1}{2} = a_L\bigg(\frac{{p_\ast}}{p_L}\bigg)^\frac{\gamma - 1}{2\gamma}
\end{equation}
Solving for $u_\ast$ we obtain
\begin{align}
    v_\ast = v_L - \frac{2a_L}{\gamma -1}\bigg[\bigg(\frac{p_\ast}{p_L}\bigg)^\frac{\gamma -1}{2\gamma} - 1\bigg].
\end{align}
Hence,
\begin{equation}
    f_L = \frac{2a_L}{\gamma -1}\bigg[\bigg(\frac{p_\ast}{p_L}\bigg)^\frac{\gamma -1}{2\gamma} - 1\bigg].
\end{equation}
Again, we can similarly obtain the relation for $f_R$ in the case of an expansion.
\end{proof}

\begin{remark}[Pressure Function Behavior]
Let $Z\in \{L,R\}$. A straightforward computation of $f_Z'(p)$ shows that $f_Z'(p)>0$ and hence $\phi'(p) >0$. So, the function is monotone increasing. Similarly, we can see that $f_Z''(p) < 0$, and so $\phi(p) < 0$. Thus, our pressure function is strictly concave.
\end{remark}
\subsection{Entropy Solution}
Let $\xi:=\frac{x}{t}$ be a self-similar parameter, and let $\mathbf{c}(x,t)$ be the solution to the Riemann problem in primitive variables. The entropy solution to the Riemann problem is
\[\mathbf{c}(x,t)=
\begin{cases}
    c_L & \xi<\lambda_L^-(p_\ast)\\
    c_{LL} & \lambda_L^-(p_\ast)<\xi < \lambda_L^+(p_\ast)\\
    c_L^\ast & \lambda_L^+(p_\ast) < \xi < v_\ast\\
    c_R^\ast & v_\ast < \xi < \lambda_R^-(p_\ast)\\
    c_{RR} & \lambda_R^-(p_\ast) < \xi < \lambda_R^+(p_\ast)\\
    c_R & \xi > \lambda_R^+(p_\ast)
\end{cases}
\]
where
\begin{subequations}
\begin{equation}
    c_{LL}(\xi) = \bigg(\rho_L(\frac{2}{\gamma + 1} + \frac{(\gamma -1)(v_L-\xi)}{(\gamma + 1)a_L})^\frac{2}{\gamma-1},v_L-f_L, p_L(\xi)\bigg)^T
\end{equation}
\begin{equation}
    c_L^\ast = 
    \begin{cases}
        c_{LL} & p_\ast < p_L\\
        (\rho_{\ast L}, v_\ast, p_\ast)^T & p_\ast \geq p_L
    \end{cases}
\end{equation}
\begin{equation}
    c_R^\ast =
    \begin{cases}
        c_{RR} & p_\ast < p_R\\
        (\rho_{\ast R}, v_\ast, p_\ast)^T & p_\ast \geq p_R
    \end{cases}
\end{equation}
\begin{equation}
    c_{RR}(\xi) = \bigg(\rho_R(\frac{2}{\gamma + 1} + \frac{(\gamma -1)(v_R-\xi)}{(\gamma + 1)a_R})^\frac{2}{\gamma-1},v_R+f_R, p_R(\xi)\bigg)^T.
\end{equation}
\end{subequations}

\begin{proof}[proof of $c_{LL}$ and $c_{RR}$]
    We first find velocity $v_L(\xi)$. We have the following set of equations:
    \begin{align}
        \xi &= v(\xi) - a(\xi)  \label{eq 1}\\
        v_L + \frac{2a_L}{\gamma-1} &= v_L(\xi) + \frac{2a_L(\xi)}{\gamma-1}. \label{eq 2}
    \end{align}
    Solving for $a$ in \eqref{eq 1} and substituting into \eqref{eq 2} we get
    \begin{equation}
        v_L(\xi) + \frac{2(v_L(\xi)-\xi)}{\gamma-1} = v_L + \frac{2a_L}{\gamma-1}.
    \end{equation}
    Rearranging to solve for $v$ gives us
    \begin{equation}
        v_L(\xi) = \frac{2}{\gamma+1}\left[\frac{\gamma-1}{2}v_L + a_L + \xi\right]. 
    \end{equation}
    Now, the sound speed in the fan is given by
    \begin{equation}
        a_L(\xi) = \left(\frac{\gamma p_L(\xi)}{\rho_L(\xi)}\right)^\frac{1}{2}
    \end{equation}
    using the definition of sound speed for $a_L$ and the relation for $\rho_L(\xi)$ in (89), we can rewrite the fan sound speed as
    \begin{equation}
        a_L(\xi) = a_L\left(\frac{p_L(\xi)}{p_L}\right)^\frac{\gamma-1}{2\gamma}.
    \end{equation}
    Using \eqref{eq 1} and rearranging we obtain
    \begin{equation}
        p_L(\xi) = p_L\left[\frac{2}{\gamma+1} + \frac{\gamma-1}{(\gamma+1)a_L}(v_L-\xi) \right]^\frac{2\gamma}{\gamma-1}.
    \end{equation}
The fan density follows from plugging in the above pressure to \eqref{eq 1}:
\begin{equation}
    \rho_L(\xi) = \rho_L\left[\frac{2}{\gamma+1} + \frac{\gamma-1}{(\gamma+1)a_L}(v_L-\xi) \right]^\frac{2}{\gamma-1}.
\end{equation}
\end{proof}

\subsection{Wave-Speeds}
The waves speeds are given by:
\begin{equation}
    \lambda_L^-(p_\ast) = v_L - a_L\sqrt{1 + \frac{1}{\gamma}\max\bigg\{\frac{(p_\ast-(1+\gamma)p_L)p_\ast^\frac{1}{\gamma}+ \gamma p_Lp_L^\frac{1}{\gamma}}{p_L(p_\ast^\frac{1}{\gamma}-p_L^\frac{1}{\gamma})}, 0}\bigg\} 
\end{equation}
\begin{equation}
    \lambda_L^+(p_\ast) = 
    \begin{cases}
        v_L - f_L(p_\ast) - a_L(\frac{p_\ast}{p_L})^\frac{\gamma-1}{2\gamma} & \text{if $p_\ast < p_L$} \\
        \lambda_L^-(p_\ast) & \text{ if $p_\ast \geq p_L$}
    \end{cases}
\end{equation}
\begin{equation}
    \lambda_R^-(p_\ast) = 
    \begin{cases}
        v_R + f_R(p_\ast) + a_R(\frac{p_\ast}{p_R})^\frac{\gamma-1}{2\gamma} & \text{if $p_\ast < p_R$} \\
        \lambda_R^+(p_\ast) & \text{ if $p_\ast \geq p_R$}
    \end{cases}
\end{equation}
\begin{equation}
\lambda_R^+(p_\ast) = v_R + a_R\sqrt{1 + \frac{1}{\gamma}\max\bigg\{\frac{(p_\ast-(1+\gamma)p_R)p_\ast^\frac{1}{\gamma}+ \gamma p_Rp_R^\frac{1}{\gamma}}{p_R(p_\ast^\frac{1}{\gamma}-p_R^\frac{1}{\gamma})}, 0}\bigg\} 
\end{equation}
\begin{proof}
    We have
    \begin{equation}
        (v_L - S_L) = \frac{Q_L}{\rho_L}
    \end{equation}
    Solving for $S_L$ we obtain:
    \begin{align*}
        S_L &= v_L - \frac{Q_L}{\rho_L} \\
        &= v_L - a_L\left[1 + \frac{(p_\ast-p_L)p_\ast^\frac{1}{\gamma}}{\gamma p_L(p_\ast^\frac{1}{\gamma} - p_L^\frac{1}{\gamma})} - \frac{\gamma p_L(p_\ast^\frac{1}{\gamma} - p_L^\frac{1}{\gamma})}{\gamma p_L(p_\ast^\frac{1}{\gamma} - p_L^\frac{1}{\gamma})}\right]^\frac{1}{2} .
    \end{align*}
    Combining fractions and simplifying gives the result
    \begin{equation}
        S_L = v_L-a_L\sqrt{1 + \frac{1}{\gamma}\frac{\left[p_\ast^\frac{\gamma+1}{\gamma}-(1+\gamma)p_Lp_\ast^\frac{1}{\gamma}+\gamma p_L^\frac{\gamma+1}{\gamma}\right](p_\ast^\frac{1}{\gamma}-p_L^\frac{1}{\gamma})}{p_L}}.
    \end{equation}
\end{proof}

\bibliographystyle{abbrvnat}
\bibliography{ref}

\end{document}